\newtheorem{theorem}{Theorem}[section]
\newtheorem{thm}[theorem]{Theorem}
\newtheorem{prop}[theorem]{Proposition}
\newtheorem{claim}[theorem]{Claim}
\newtheorem{fact}[theorem]{Fact}
\newtheorem{lemma}[theorem]{Lemma}
\theoremstyle{definition}
\newtheorem{defn}[theorem]{Definition}
\newtheorem{example}[theorem]{Example}
\theoremstyle{remark}
\newtheorem{rmk}[theorem]{Remark}
\newtheorem{remark}[theorem]{Remark}
\newcommand{\Z}{\mathbb{Z}}
\newcommand{\N}{\mathbb{N}}
\newcommand{\R}{\mathbb{R}}
\newcommand{\CM}{\mathcal M}
\newcommand{\sub}{\subseteq}
\newcommand{\CN}{\mathcal N}
\newcommand{\ra}{\rangle}
\newcommand{\la}{\langle}
\newcommand{\noi}{\noindent}
\newcommand{\CH}{\mathcal H}
\newcommand{\CB}{\mathcal B}
\newcommand{\CK}{\mathcal K}
\newcommand{\Vdef}{$\bigvee$-definable }
\newcommand{\CV}{\mathcal V}
\newcommand{\CU}{\mathcal U}
\newcommand{\lgdim}{\ensuremath{\textup{lgdim}}}
\newcommand{\Lrarr}{\ensuremath{\Leftrightarrow}}
\newcommand{\wh}{\widehat}
\newcommand{\wH}{\widehat \CH}
\newcommand{\bb}[1]{\ensuremath{\mathbb{#1}}}
\newcommand{\al}{\alpha}
\newcommand{\Lam}{\ensuremath{\Lambda}}
\newcommand{\cal}[1]{\ensuremath{\mathcal{#1}}}
\title[Definable groups]{Definable groups
as homomorphic images of semi-linear and field-definable groups}
\author{Pantelis~E.~Eleftheriou}
\address{CMAF, Universidade de Lisboa,
Av. Prof. Gama Pinto 2, 1649-003 Lisboa, Portugal} \email{pelefthe@uwaterloo.ca}
\thanks{The first author was supported by the Funda\c{c}\~ao para a Ci\^encia e a Tecnologia
grants SFRH/BPD/35000/2007 and PTDC/MAT/101740/2008}
\author{Ya'acov Peterzil}
\address{Department of Mathematics, University of Haifa, Haifa, Israel}
\email{kobi@math.haifa.ac.il}
\begin{document}

\subjclass[2010]{03C64, 03C68, 22B99}
\keywords{O-minimality, semi-bounded structures, locally definable groups, definable
quotients, type-definable groups}
\date{\today}

\begin{abstract}
We analyze definably compact groups  in o-minimal expansions of ordered groups as a
combination of semi-linear groups and groups definable in o-minimal expansions of
real closed fields. The analysis involves structure theorems about their locally
definable covers.  As a corollary, we prove the Compact Domination Conjecture in
o-minimal expansions of ordered groups.
\end{abstract}
 \maketitle

\section{Introduction}
This is the second of two papers (originally written as one)
analyzing groups definable in o-minimal expansions of ordered groups. The ultimate goal of this project
is to reduce the analysis of such groups  to semi-linear groups and to groups definable in o-minimal
 expansions of real closed fields. Such a reduction was proposed in Conjecture 2 from \cite{pet-sbd}
 and a first step towards it was carried out in \cite{el-sbd}. 

In the first paper (\cite{ep-defquot}) we established conditions under which locally definable groups
 have definable quotients of the same dimension.
 In this paper, we carry out the aforementioned reduction for definably compact groups by first stating a
  structure theorem for the universal cover $\widehat G$ of a definable group $G$ (Theorem \ref{thm1}). We  describe
  $\widehat G$
  as an extension of a locally definable group $\CU$ in an o-minimal expansion of a real closed field by a locally definable
  semi-linear group $\widehat H$.
   We then apply \cite[Theorem 3.10]{ep-defquot} and derive a stronger structure theorem
   (Theorem \ref{thm2}), replacing the above $\CU$ by {\em a definable group}. We expect that the second theorem will be useful when
    reducing questions for definable groups to groups in the semi-linear and field settings.
    We illustrate this effect by applying our second theorem to conclude the Compact
    Domination Conjecture in o-minimal expansions of ordered groups (Theorem \ref{thm-cdom} below).

Let us provide the details.

\subsection{The setting} We let $\cal{M}=\langle M, <, +, 0,
\dots\rangle$ be an o-minimal expansion of an ordered group. When
$\CM$ expands a real closed field (with $+$ not necessarily one
of the field operations) there is strong compatibility of
definable sets with the field structure. For example, each
definable function is piecewise differentiable with respect to the
field structure. Other powerful tools, such as the triangulation
theorem, are available as well (\cite{vdd}). At the other end,
when $\CM$ is a linear structure, such as a reduct of an ordered
vector space over an ordered division ring, then every definable
set is \emph{semi-linear}.

By the Trichotomy Theorem for o-minimal structures there is a third possibility (see
\cite{pest-tri}), where there is a definable real closed field $R$ on some interval
in $M$, and yet the underlying domain of $R$ is necessarily a bounded interval and
not the whole of $M$. Such a structure is called \emph{semi-bounded} (and
non-linear), and definable sets in this case turn out to be a combination of
semi-linear sets and sets definable in o-minimal expansions of fields (see
\cite{ed-sbd}, \cite{pet-sbd}, \cite{el-sbd}). An important example is the expansion
of the ordered vector space $\la \R; <,+, x\mapsto ax\ra_{a\in \R}$ by
all bounded semialgebraic sets. Most of our  work is intended for a semi-bounded structure which is non-linear.\\

\emph{We assume in the rest of this paper, and unless stated otherwise, that $\CM=\la M,<,+,\cdots\ra$ is a sufficiently saturated
o-minimal expansion of an ordered group.}\vskip.2cm

\subsection{Short sets and long dimension.} Following \cite{pet-sbd}, we call an
element $a\in M$ {\em short} if either $a=0$ or the interval $(0,a)$ supports a
definable real closed field; otherwise $a$ is called {\em tall}. An element of $M^n$
is called {\em short} if all its coordinates are short. An interval $[a,b]$ is
called {\em short} if $b-a$ is short, and otherwise it is called {\em long}. A
definable set $X\sub M^n$ is called {\em short} if it is in definable bijection with
a subset of $I^n$ for some short interval $I$. The image of a short set under a
definable map is short. As is shown in \cite{ed-sbd}, $\CM$ is semi-bounded if and
only if all unbounded rays $(a,+\infty)$ are long. However, a semi-bounded and
sufficiently saturated $\cal M$ also has bounded intervals which are long.

Following \cite{el-sbd} (see also Section \ref{sec-sbd} below), we say that {\em the
long dimension} of a definable $X\sub M^n$, $\lgdim(X)$, is the maximum $k$ such
that $X$ contains a  definable homeomorphic image of $I^k$, for some long interval
$I$ (the original definition of $\lgdim(X)$ was given in terms of cones, see Section
3 below, but it is not hard to see the equivalence of the two). The results in
\cite{el-sbd} show that every definable subset of $M^n$ can be decomposed into
``long cones'' and as a result it follows that a definable $X\sub M^n $ is short if
and only if $\lgdim(X)=0$.  We call $X$ {\em strongly long} if $\lgdim(X)=\dim(X)$;
this is for example the case with a cartesian product of long intervals.  Note that
all these notions are invariant under definable bijections.

Roughly speaking, strongly long sets and short sets are ``orthogonal'' to each
other. The idea is that the structure which $\CM$ induces  on short sets comes from
an o-minimal expansion of a real closed field, while the structure induced on
strongly long sets is closely related to the semi-linear structure. More precisely,
if $p(x)$ is a complete type over $A$ such that every formula in $p(x)$ defines a
strongly long set then its semi-linear formulas determine the type.  This is a
result which will not be used in this paper, but its proof is straightforward.
Indeed, the aforementioned decomposition from \cite{el-sbd} implies, in particular,
that every strongly long definable set $X$ of dimension $k$ is a union of a strongly
long $k$-dimensional semi-linear set and a definable set whose long dimension is
smaller than $k$. Both sets are definable over the same set of parameters as $X$. It
follows that $p(x)$ is determined by the semi-linear formulas.

We will see in examples (Section \ref{nonextension}) that
 the analysis of definable groups forces us to use the language of $\bigvee$-definable groups, so we recall some definitions.

\subsection{\Vdef and locally definable sets.} Let $\cal M$ be a $\kappa$-saturated,
 not necessarily o-minimal, structure. By \emph{bounded} cardinality we mean cardinality smaller than $\kappa$. We alert the reader that there is a second use of the word ``bounded" throughout this paper. Namely, a subset of $M^n$ is \emph{bounded} if it is contained in some cartesian
product of bounded intervals. It will always be clear from the context what we mean.

A \emph{\Vdef group}  is a group $\la \CU,\cdot \ra$ whose universe is a directed
union
  $\CU=\bigcup_{i\in I} X_i$ of
definable subsets of $M^n$ for some fixed $n$  (where $|I|$ is bounded) and for every
$i,j\in I$, the restriction of group multiplication to $X_i\times X_j$ is a
definable function (by saturation, its image is contained in some $X_k$). Following
\cite{ed1}, we say that $\la \CU,\cdot \ra$ is \emph{locally definable} if $|I|$ is
countable. In this paper, we consider exclusively locally definable groups. We are
mostly interested in {\em definably generated} groups, namely \Vdef groups which are
generated as a group by a definable subset. These groups are of course locally
definable. An important example of such groups is the universal cover of a definable
group (see \cite{edel2}). In \cite{HPP} a similar notion is introduced, of an
Ind-definable group.

A map $\phi:\CU\to \CH$ between \Vdef (locally definable) groups is called
{\em \Vdef (locally definable)}  if for every definable $X\sub \CU$ and $Y\sub \CH$,
$graph(\phi)\cap (X\times Y)$ is a definable set. Equivalently, the restriction of $\phi$  to any definable set is definable.

In an o-minimal expansion of an ordered group, a \Vdef group $\CU$ is called {\em
short} if $\CU$ is given as a bounded union of definable short sets. If
$\CU=\bigcup_{i\in I} X_i$ then we let $\lgdim(\CU)=max_i(\lgdim (X_i))$. We say
that $\CU$ is {\em strongly long} if $\dim(\CU)=\lgdim(\CU)$.\medskip

We are now ready to state the main results of this paper. Note that in
 the special case where $\cal M$ expands a real closed field, the
results below become trivial (since in this case all definable sets are short), and
in the case where $\cal M$ is semi-linear, they reduce to the main theorem from
\cite{ElSt} (since in this case every definable short set is finite).

\subsection{The universal cover of a definably compact group}
We first note (see \cite[Lemma 7.1]{pet-sbd}) that every definably compact group in
a semi-bounded structure is necessarily bounded; namely, it is contained in some cartesian
product of bounded intervals.

\begin{thm}\label{thm1} Let $G$ be a  definably compact, definably connected group of long dimension
$k$ and  let $\widehat F:\widehat G\to G$ be the universal cover of $G$. Then there
exist an open, connected subgroup $\widehat \CH\sub \la M^k,+\ra$, generated by a
semi-linear set of long dimension $k$, and a locally definable embedding $i:\widehat
\CH\to \widehat G$, with $i(\wH)$ central in $G$,  such that $\CU=\widehat
G/i(\widehat \CH)$  is generated by a short definable set. Namely, we have the
following exact sequence with locally definable maps $i$, $\pi$ and $\widehat F$:
$$
\begin{diagram}
\node{0}\arrow{e}\node{\widehat\CH} \arrow{e,t}{i}\node{\widehat G} \arrow{s,r}{\widehat F}\arrow{e,t}{\pi}\node{\CU} \arrow{e} \node{0}\\
\node[3]{G}
\end{diagram}
$$

If we let $\CH=\wh F(i(\wH))$, then $\CH$ is the largest connected, strongly long,
locally definable subgroup of $G$, namely it contains every other such group.

\end{thm}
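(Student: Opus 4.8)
The plan is to extract everything from a single object, the largest connected strongly long locally definable subgroup $\CH$ of $G$, and then lift it to the cover $\widehat G$. First I would establish that such a largest subgroup exists. The crucial closure fact is that the subgroup generated by two connected strongly long locally definable subgroups is again connected, strongly long and locally definable; granting this, since $\lgdim$ is bounded by $k=\lgdim(G)$, any two such subgroups of maximal dimension generate one of the same dimension containing both, so there is a unique largest one, which I call $\CH$. Both producing a subgroup that attains $\lgdim=k$ in the first place and proving the closure fact rest on the long-cone decomposition of \cite{el-sbd}, which is what controls the long dimension of images of definable sets under the group operations.

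Next I would identify the structure of $\CH$. Being a connected strongly long subgroup of the definably compact $G$, one shows $\CH$ is definable and definably compact; since its long dimension equals its dimension, the long-cone analysis of \cite{el-sbd} forces its induced structure to be semi-linear, and then by the classification of semi-linear definably compact connected groups in \cite{ElSt} it is a linear torus $\CH\cong M^k/\Lam_0$ for a lattice $\Lam_0$. In particular $\CH$ is abelian and $\lgdim(\CH)=k=\lgdim(G)$, so $\CH$ absorbs all of the long dimension of $G$. For centrality I would use that a linear torus admits only discretely many automorphisms: the conjugation action $G\to\mathrm{Aut}(\CH)$ permutes the finite torsion subgroups of $\CH$ and hence preserves $\Lam_0$, landing in a copy of $GL_k(\Z)$, which is discrete; as $G$ is definably connected the action is trivial, so $\CH$ is central in $G$.

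I would then transfer to the cover. Let $\wH$ be the universal cover of $\CH$; as the cover of a linear torus it is an open connected subgroup of $\la M^k,+\ra$ generated by a semi-linear set of long dimension $k$, and the inclusion $\CH\hookrightarrow G$ induces a locally definable embedding $i:\wH\to\widehat G$ with $\wh F\co i$ the covering $\wH\to\CH$; in particular $\wh F(i(\wH))=\CH$. Centrality lifts by the standard commutator argument: $[\widehat G, i(\wH)]$ maps under $\wh F$ into $[G,\CH]=\{e\}$, so it lies in the discrete kernel of $\wh F$, while being the image of a connected set through the identity, hence is trivial; thus $i(\wH)$ is central in $\widehat G$. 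Taking $\pi$ to be the quotient map gives the sequence $0\to\wH\xrightarrow{i}\widehat G\xrightarrow{\pi}\CU\to 0$ with $\CU=\widehat G/i(\wH)$, all maps locally definable. To see that $\CU$ is generated by a short definable set, note $\lgdim(\widehat G)=\lgdim(G)=k=\lgdim(\CH)$; quotienting by the strongly long subgroup $i(\wH)$ drops the long dimension to $0$ (additivity of $\lgdim$ along the sequence, as in \cite{el-sbd}, \cite{ep-defquot}), and a definably generated group of long dimension $0$ is generated by a short definable set.

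Finally the maximality statement is immediate from the construction together with the lifting correspondence: given any connected strongly long locally definable $\CH'\le G$, the identity component of $\wh F^{-1}(\CH')$ is a connected strongly long locally definable subgroup of $\widehat G$ mapping onto $\CH'$, and by maximality upstairs it lies in $i(\wH)$; applying $\wh F$ yields $\CH'\sub\wh F(i(\wH))=\CH$. I expect the main obstacle to be the construction and structure of $\CH$ in the first two paragraphs, namely producing a strongly long subgroup attaining $\lgdim=k$ and proving closure of the class under the generated-subgroup operation, since both require pushing the long-cone machinery of \cite{el-sbd} through the group multiplication; once $\CH$ is known to be a central linear torus, the lifting, the exactness and the maximality are comparatively routine.
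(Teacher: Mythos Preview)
Your proposal has a fatal gap in the second paragraph: the claim that $\CH$ is \emph{definable} (and hence a linear torus $M^k/\Lambda_0$) is false in general. Example~\ref{example1} in the paper gives a definably compact $G$ in which the largest connected strongly long locally definable subgroup $\CH$ is an infinite disjoint union $\bigcup_{s\in S}\{s\}\times[0,b)$ indexed by a dense countable subgroup $S\subseteq[0,a)$; this $\CH$ is not definable, not compatible in $G$, and certainly not a linear torus. The impossibility of realizing $\CH$ as a definable subgroup is exactly why the theorem must be stated at the level of the universal cover, so any argument that begins by identifying $\CH\cong M^k/\Lambda_0$ inside $G$ cannot succeed. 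Once this step fails, the subsequent lifts also collapse: you cannot invoke the classification of semi-linear tori, the centrality argument via $GL_k(\Z)$ does not apply, and there is no direct way to produce the embedding $i:\wH\hookrightarrow\widehat G$ from an inclusion $\CH\hookrightarrow G$.

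The paper's route is essentially the opposite of yours. It first uses the long-cone analysis (Proposition~\ref{groupH}) to extract a definable \emph{local} subgroup $H\subseteq G$, isomorphic as a local group to an open box $H'\subseteq M^k$, together with a short set $B$ with $B\oplus H$ generic. From this it builds $\wH=\langle H'\rangle\subseteq M^k$ and a short locally definable $\CB=\bigcup_n B(n)\subseteq G$, proves the delicate compatibility of $\CH_0\cap\CB$ in $\CB$ (Claim~\ref{compatible}; note $\CH\cap\CB$ is \emph{not} compatible), and then through a sequence of explicit pushouts and a pullback constructs a torsion-free divisible locally definable group containing $\wH$ which covers $G$---only afterwards identifying it with the universal cover $\widehat G$. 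Shortness of $\CU$ comes not from any additivity of $\lgdim$ along exact sequences (which you invoke but do not justify) but because $\CU$ is by construction the universal cover of $\CB/(\CH_0\cap\CB)$, and $\CB$ is short. The maximality of $\CH$ is proved last (Lemma~\ref{maxVdef}), as a consequence of the construction rather than as its starting point.
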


\noi{\bf Question } In Section \ref{nonextension} we present various examples that illustrate this theorem.
 In all our known examples the universal cover $\wh G$ is the
direct sum of the groups $\wH$ and $\CU$ (rather then just an extension of $\CU$ by
$\wH$). Can $\widehat G$ always be realized as a direct sum of $\wH$ and
$\CU$?\\

\begin{remark}
1.  One immediate corollary of the above theorem is that every definably compact
group $G$ which is strongly long is definably isomorphic to a semi-linear group,
because in this case $\CH=G$.

2. Note that  when $G$ is abelian, we have $\ker(\wh F)\simeq\Z^{\dim G}$ (indeed,
by \cite[Corollary 1.5]{edel2}, we have  $ker(\wh F)\simeq \Z^l$, where the
$k$-torsion subgroups of $G$ satisfy $G[k]\simeq (\Z/k\Z)^l$. By \cite{pet-sbd}, we
have $l=\dim G$).

 3.  Note that since $\CU$ above is generated by a definable short set, there
is a definable real closed field $R$ such that $\CU$ is locally
definable in an o-minimal expansion of $R$. Indeed, let $X\sub
\CU$ be a definable set which generates $\CU$, and let $R$ be a
definable real closed field such that, up to an $\CM$-definable
definable bijection, $X$ is a subset of $R^m$. Let $\CN$ be the
structure which $\CM$ induces on $R$. Without loss of generality,
$0\in X$. We let $X_1=X$ and consider the equivalence relation on
$X\times X$ given by $(x,y)\sim (x',y')$ if $x-y=x'-y'$. Clearly,
$X\times X/\sim$ is in definable bijection with $X-X$. By
definable choice in $\CN$, there exists a definable set $Y$ in
$\CN$ and a definable bijection between $X\times X/\sim$ and $Y$.
Hence, in $\CM$ the sets $X-X$ and $Y$ are in definable bijection.
Now consider the definable embedding of $X$ into $X-X$ ($x\mapsto
x-0$), which induces an $\CN$-definable injection $f_1:X_1\to Y$.
We let
$$X_2=X_1\sqcup (Y\setminus f_1(X_1)).$$
The set $X_2$ is definable in $\CN$ and is in definable bijection
with $Y$ (so also with $X-X$). We also have $X_1\sub X_2$.

We similarly define $X_3$ in $\CN$ to be in definable bijection with $X-X+X$ and
such that $X_1\sub X_2\sub X_3$. We continue in the same way and obtain a locally
definable set $\bigcup_{n\in \mathbb N} X_n$ in $\CN$ that is in locally definable
bijection with $\CU$.
\end{remark}

\subsection{Covers by extensions of definable short groups}

In the next result we want to replace the locally definable group $\CU$ from Theorem
\ref{thm1} by a definable short group $\overline{K}$. Roughly speaking, it says that
$G$ is close to being an extension of a short definable group by a semi-linear
group, and the distance from being such a group is measured by the kernel of the map
$\overline{F'}$ below.

\begin{thm}\label{thm2}
Let $G$ be a definably compact, definably connected group of long dimension $k$.
Then  $G$ has
 a locally definable cover $\overline{F}:\overline{G}\to G$ with the following
 properties: there is an open subgroup $\wH\sub \la M^k,+\ra$,
 generated by a semi-linear set of long dimension $k$,  and a locally definable  embedding $i:\wH\to \overline{G}$,
 with $i(\wH)$ central in $\overline{G}$, such that
 $\overline{K}=\overline G/i(\wH)$ is a definably compact {\bf definable} short group.
 Namely, we have the following exact
sequence with locally definable  maps $i$, $\pi$ and $\overline F$:
$$
\begin{diagram}
\node{0}\arrow{e}\node{\wH} \arrow{e,t}{i}\node{\overline{G}}
\arrow{s,r}{\overline{F}}\arrow{e,t}{\pi}\node{\overline{K}}
\arrow{e} \node{0}\\
\node[3]{G}
\end{diagram}
$$
If we take $\CH\sub G$ as in Theorem \ref{thm1}, then there is also a locally
definable, central extension $G'$ of $\overline{K}$ by $\CH$, with a locally
definable homomorphism $F':G'\to G$.

When $G$ is abelian so is $\overline{G}$ and $\ker(\overline{F})\simeq \Z^k+F$,
for a finite group $F$.
\end{thm}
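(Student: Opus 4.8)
The plan is to derive Theorem \ref{thm2} from Theorem \ref{thm1} by pushing the locally definable group $\CU$ down to a definable short group via the quotient machinery of \cite{ep-defquot}. First I would recall from Theorem \ref{thm1} the central extension
$$
0\to\wH\xrightarrow{\ i\ }\wh G\xrightarrow{\ \pi\ }\CU\to 0,
$$
with $\CU$ generated by a short definable set $X$ and $\wh F:\wh G\to G$ the universal cover. Since $\CU$ is definably generated by a short set, by Remark 3 following Theorem \ref{thm1} it is (locally definably isomorphic to) a locally definable group in an o-minimal expansion $\CN$ of a real closed field $R$. I would then invoke \cite[Theorem 3.10]{ep-defquot}: the hypotheses there should apply to $\CU$ because $\CU$ is definably generated and short, so that $\CU$ admits a definable quotient $\overline K=\CU/\Lambda$ of the same dimension, where $\Lambda$ is a countable normal (in fact central, as $\CU$ is a quotient of the abelian-by-\dots cover) locally definable subgroup with $\dim\Lambda=0$; moreover $\overline K$ is definably compact and short, being a definable image of a short set. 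Let $q:\CU\to\overline K$ be the quotient map.

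Next I would form $\overline G$ as the pullback. Concretely, set $\overline G=\wh G/\, i(\wH')$ where $\wH'=\pi^{-1}(\Lambda)\cap i(\wH)$—but since $i(\wH)\cap\pi^{-1}(\Lambda)$ need not be all of $i(\wH)$, the cleaner route is: let $L=\pi^{-1}(\Lambda)\le\wh G$, a locally definable central subgroup, and put $\overline G=\wh G/L_0$ for an appropriate piece, arranging that the restriction of $\pi$ induces $\overline\pi:\overline G\to\overline K$ with kernel exactly the image of $\wH$. The honest bookkeeping is: $L$ is a central locally definable subgroup of $\wh G$ containing no subgroup of $i(\wH)$ of positive dimension, $\wh G/L\cong ?$—one checks $i$ descends to an embedding $\overline i:\wH\to\overline G:=\wh G/L$ (using $i(\wH)\cap L$ is finite, or trivial after a further quotient), that $i(\wH)$ remains central, and that $\overline G/\overline i(\wH)=\CU/\Lambda=\overline K$. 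The map $\wh F:\wh G\to G$ factors through $\overline G$ iff $L\subseteq\ker\wh F$, which holds since $\ker\wh F$ is the maximal locally definable subgroup with countable... — here one must be slightly careful: $\ker\wh F$ is central and the universal cover maps onto everything, so by choosing $\Lambda\subseteq q(\ker\wh F)$ appropriately (or simply enlarging $L$ to $L\cdot\ker\wh F$ and redoing the quotient) we get the induced locally definable $\overline F:\overline G\to G$. This yields the exact sequence claimed, with $i$, $\pi$, $\overline F$ locally definable.

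For the last two assertions: the group $G'$ is obtained symmetrically by quotienting $\wh G$ by $i(\wH)$ first to get $\CU$, then pulling $\CH=\wh F(i(\wH))$ back along $\overline K=\CU/\Lambda$; i.e. $G'=\CU/\Lambda'$ where $\Lambda'=\Lambda\cap\ker(\text{descended }\wh F)$, giving a central extension of $\overline K$ by $\CH$ with locally definable $F':G'\to G$ induced by $\wh F$. When $G$ is abelian, $\wh G$ is abelian (it is a cover of an abelian group, hence a quotient of $\wh G$ which is torsion-free-by-... ; more directly $\wh G\cong\R^{\dim G}$-like, cf. Remark 2 giving $\ker\wh F\cong\Z^{\dim G}$), so every subquotient $\overline G$ is abelian; and $\ker\overline F$ is the image of $\ker\wh F\cong\Z^{\dim G}$ in $\overline G$, which is $\Z^{\dim G}$ modulo a subgroup. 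Since $i(\wH)$ absorbs the "long part" $\Z^{\dim G-k}$ of the kernel (this is exactly the content of $\CH$ being the maximal strongly long subgroup, $\dim\CH=k$... wait, $\lgdim\CH=\dim\CH$ and this equals $k=\lgdim G$), the surviving kernel is $\Z^k$ extended by the finite group coming from the torsion $i(\wH)\cap L$, i.e. $\ker\overline F\cong\Z^k+F$ with $F$ finite.

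The main obstacle I expect is the precise matching of kernels in forming $\overline G$: one must choose the central subgroup $L\le\wh G$ to lie inside $\ker\wh F$, to project onto $\Lambda\le\CU$, and to meet $i(\wH)$ trivially (or in a controlled finite subgroup), simultaneously—and verify that $\overline G/\overline i(\wH)$ is genuinely the \emph{definable} group $\overline K$ rather than a non-definable locally definable group. This is where \cite[Theorem 3.10]{ep-defquot} must be applied with care, checking that $\CU$ (equivalently, $\wh G$ modulo $i(\wH)$) satisfies the hypotheses of that theorem—namely that it is generated by a definable set with the relevant compactness/definability of the subgroup $\Lambda$—and that the resulting $\overline K$ inherits definable compactness and shortness from $X$. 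The abelian case computation of $\ker\overline F$ then reduces to linear algebra over $\Z$ once the diagram is in place.
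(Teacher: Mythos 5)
Your overall strategy coincides with the paper's: apply \cite[Theorem 3.10]{ep-defquot} (Fact \ref{fromep1}) to the group $\CU$ from Theorem \ref{thm1} to obtain the definable quotient $\overline K$, and then modify $\wh G$ by a discrete central subgroup to produce $\overline G$. But the two steps you defer as ``bookkeeping'' are where the actual content lies, and as written they contain gaps. First, Fact \ref{fromep1} does not apply to $\CU$ merely because it is ``definably generated and short'': its hypotheses ask for a \emph{finitely generated} subgroup $\Lam_1\le\CU$ and a definable $X$ with $X+\Lam_1=\CU$, and the conclusion only returns some subgroup $\Lambda_0$ of the $\Lam_1$ you feed it. So the requirement $\Lambda_0\sub\pi_{\wh G}(\ker\wh F)$ --- which is exactly what makes $\overline F$ exist --- must be arranged \emph{up front} by taking $\Lam_1=\pi_{\wh G}(\ker\wh F)$; it cannot be imposed afterwards. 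Verifying the hypotheses for this $\Lam_1$ is itself nontrivial: finite generation comes from $\ker\wh F\simeq\pi_1^{def}(G)$, and producing a definably compact definable $X$ with $X+\ker\wh F=\wh G$ uses that $\wh G$ has no one-dimensional definable subgroup that fails to be definably compact. Your fallback of ``enlarging $L$ to $L\cdot\ker\wh F$'' does not rescue the argument: the resulting quotient of $\CU$ is no longer the group $\overline K$ whose definability Fact \ref{fromep1} guarantees (it is essentially $G$ modulo the image of $i(\wH)$, i.e.\ modulo $\CH$, which by Example \ref{example1} need not even be a compatible subgroup), and the embedding of $\wH$ can be destroyed when $\ker\wh F$ meets $i(\wH)$ nontrivially.

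Second, the existence of your subgroup $L$ --- central, inside $\ker\wh F$, projecting onto $\Lambda_0$, and meeting $i(\wH)$ trivially --- is not automatic and needs the following specific argument: setting $\wH'=\pi_{\wh G}^{-1}(\Lambda_0)$, the subgroup $i(\wH)$ is \emph{divisible}, hence a direct summand of $\wH'$, so $\wH'=i(\wH)\oplus\Lambda'$ with $\Lambda'\sub\ker\wh F$; one then takes $\overline G$ to be the pushout of $\wh G$ and $\wH$ over $\wH'$ along the (locally definable) projection $p:\wH'\to\wH$, i.e.\ essentially $\wh G/\Lambda'$. Without exhibiting this complement there is no reason $i(\wH)\cap L$ should be trivial, nor that the projection is locally definable. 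Finally, your formula $G'=\CU/\Lambda'$ cannot be correct: $\CU$ is short of dimension $\dim G-k$, so no quotient of it can be an extension of $\overline K$ by the $k$-dimensional group $\CH$; the group $G'$ must instead be obtained as the pushout of $\overline G$ and $\CH$ over $\wH$ along $\overline F\upharpoonright\wH$. Once these pieces are in place, the computation $\ker\overline F\simeq\Z^k+F$ does follow as you indicate, from $\ker\wh F\simeq\Z^{\dim G}$ together with $\Lambda'\simeq\Lambda_0=\pi_1^{def}(\overline K)\simeq\Z^{\dim G-k}$, the latter requiring the result of \cite{EO} on short definably compact groups.
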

\vskip.2cm

It is at the passage from the locally definable group $\CU$ in Theorem \ref{thm1} to
the definable group $\overline{K}$ in Theorem \ref{thm2} that we use \cite[Theorem
3.10]{ep-defquot}.
\subsection{Compact Domination}

 The relationship between a definable group $G$ and the compact Lie group $G/G^{00}$ has
been the topic of quite a few  papers. In \cite{el-cdom}, \cite{HP}, \cite{HPP2} the
related so-called Compact Domination Conjecture was solved for semi-linear groups
and for groups definable in expansions of real closed fields. Using the above
analysis we can complete the proof of the conjecture for groups definable in
arbitrary o-minimal expansions of ordered groups (see Section \ref{CD-section} for
the original formulation of the conjecture).

\begin{thm}\label{thm-cdom} Let $G$ be a definably compact, definably
connected group. Let $\pi:G\to G/G^{00}$ denote the canonical homomorphism. Then,
$G$ is compactly dominated by $G/G^{00}$. That is, for every definable set $X\sub
G$, the set
$$\pi(X)\cap \pi(G\setminus X)$$
 has $\mathbf{Haar}$ measure $0$.
\end{thm}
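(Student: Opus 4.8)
The plan is to deduce Theorem~\ref{thm-cdom} from the structure result in Theorem~\ref{thm2}, reducing compact domination for an arbitrary definably compact $G$ to the two cases --- semi-linear groups and groups definable in o-minimal expansions of real closed fields --- where the conjecture is already known by \cite{el-cdom}, \cite{HP}, \cite{HPP2}. Since $G^{00}$ and compact domination are insensitive to definable isogeny and to passing to the definably connected component, I may assume $G$ is definably connected. The first step is to recall the standard reformulation: compact domination of $G$ by $G/G^{00}$ is equivalent to the statement that for every definable $X\subseteq G$, the set of cosets $aG^{00}$ with $aG^{00}\subseteq X$ together with those contained in $G\setminus X$ has full Haar measure in $G/G^{00}$; equivalently, the ``boundary'' set $\pi(X)\cap\pi(G\setminus X)$ is Haar-null. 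It is well known (and I would cite \cite{HPP}, \cite{el-cdom}) that compact domination behaves well under extensions: if $1\to N\to G\to K\to 1$ is an exact sequence of definably compact definably connected groups, with $N$ and $K$ each compactly dominated by $N/N^{00}$ and $K/K^{00}$ respectively, then $G$ is compactly dominated by $G/G^{00}$, because $G^{00}$ sits in the exact sequence $1\to N^{00}\to G^{00}\to K^{00}\to 1$ and the Haar measure on $G/G^{00}$ fibres over that on $K/K^{00}$ with fibre the Haar measure on $N/N^{00}$ (a Fubini argument handles the boundary set).

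The second step is to set up the genuine reduction using Theorem~\ref{thm2}. That theorem gives a locally definable cover $\overline F:\overline G\to G$ and an exact sequence $0\to\wH\to\overline G\xrightarrow{\pi}\overline K\to 0$ with $\wH\subseteq\la M^k,+\ra$ central and semi-linear, and $\overline K$ a definably compact \emph{definable} short group. Here $\overline G$ and $\wH$ are only locally definable, so I cannot apply compact domination to them directly; instead I would push the structure down to $G$ itself. Let $\CH=\overline F(i(\wH))\subseteq G$ be the largest connected strongly long locally definable subgroup, as in Theorems~\ref{thm1}--\ref{thm2}. The key point is that $\ker\overline F\cong\Z^k+(\text{finite})$ in the abelian case (and is discrete and central in general), so that $\overline F$ induces an identification of $G$ with a quotient of $\overline G$ by a discrete central subgroup, and consequently $G$ fits (up to definable isogeny) into a short exact sequence $0\to \CH'\to G\to \overline K'\to 0$ where $\CH'$ is a \emph{definable} definably compact semi-linear group (a quotient of $\wH$ by a lattice --- this is where the definable quotient results of the companion paper \cite{ep-defquot}, already invoked in Theorem~\ref{thm2}, do the work of turning the locally definable $\wH$ into an honest definable semi-linear torus) and $\overline K'$ is definably isogenous to the short definable group $\overline K$, hence definable in an o-minimal expansion of a real closed field (Remark~3 after Theorem~\ref{thm1}).

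The third step is to invoke the known cases: $\CH'$ is semi-linear, so it is compactly dominated by $\CH'/(\CH')^{00}$ by \cite{el-cdom} (or \cite{HPP2}); and $\overline K'$, being definable in an o-minimal expansion of a real closed field, is compactly dominated by $\overline K'/(\overline K')^{00}$ by \cite{HPP2}, \cite{HP}. Applying the extension principle from the first step to $0\to\CH'\to G\to\overline K'\to 0$ yields that $G$ is compactly dominated by $G/G^{00}$, which is exactly Theorem~\ref{thm-cdom}. I would close by noting that the non-abelian case reduces to the abelian one in the usual way: a definably compact definably connected group has abelian $G/G^{00}$ only after... actually more carefully, one uses that $[G,G]$ is semisimple and, being definably compact, already known to be compactly dominated (by the field case, since semisimple definably compact groups are definable in o-minimal expansions of real closed fields), and then applies the extension principle to $1\to[G,G]\to G\to G/[G,G]\to 1$ with $G/[G,G]$ abelian; the abelian case is handled as above.

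The main obstacle I anticipate is the second step: making precise the passage from the \emph{locally definable} extension of $\overline K$ by $\wH$ in Theorem~\ref{thm2} to an honest \emph{definable} extension to which the known compact-domination results (stated for definable groups) apply. Theorem~\ref{thm2} already addresses half of this --- it produces a definable short group $\overline K$ via \cite[Theorem 3.10]{ep-defquot} --- but one still needs that $\CH=\overline F(i(\wH))$, or an appropriate quotient of it, is definably isomorphic to a definable semi-linear torus, and that $G$ is, up to the finite kernel data recorded in $\ker\overline F\cong\Z^k+F$, genuinely the middle term of a definable short exact sequence. Handling the finite group $F$ and checking that $G^{00}$ is the preimage of $(\overline K)^{00}$ under the relevant map (so that the Fubini argument for the boundary set goes through) is the delicate bookkeeping; I would expect to lean on \cite[Corollary 1.5]{edel2} for the torsion/kernel computation and on the results of \cite{ep-defquot} for the definability of the quotients.
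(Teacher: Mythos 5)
Your reduction founders at the second step, and the failure is not a matter of bookkeeping: the definable short exact sequence $0\to \CH'\to G\to \overline{K}'\to 0$ that you want, with $\CH'$ a \emph{definable} semi-linear torus obtained as $\wH$ modulo a lattice, does not exist in general. The paper's Section \ref{nonextension} is devoted to exactly this point: it exhibits definably compact $G$ (Examples \ref{example1} and 5.3) containing no infinite definable strongly long subgroup --- indeed no infinite proper definable subgroup at all --- so $G$ cannot sit in the middle of any such definable extension, even up to isogeny. The obstruction is that $\ker(\overline{F})\simeq \Z^k+F$ is \emph{not} contained in $i_1(\wH)$; its generators are ``skew'' with respect to the splitting of $\overline{G}$, which is precisely why $\CH=\overline{F}(i(\wH))$ comes out non-definable (and not even compatible in $G$) in Example \ref{example1}. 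Quotienting $\wH$ by a lattice inside $G$ is therefore not available, and the known compact-domination results for definable semi-linear and field-definable groups cannot be applied to subquotients of $G$ itself. (Your appeal to a general ``compact domination is closed under definable extensions'' principle is also not something you can simply cite from \cite{HPP} or \cite{el-cdom}; the Fubini argument has to be carried out by hand, and in this paper it is carried out for locally definable groups, where even the existence and identification of $\wH^{00}$ and $\CU^{00}$ require proof.)

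The paper's actual route accepts that the decomposition lives only on the cover: it first shows $G$ is compactly dominated iff its universal cover $\wh G$ is (the kernel being countable), and then proves compact domination for $\wh G$ directly using the \emph{locally definable} exact sequence $0\to\wH\to\wh G\to\CU\to 0$ of Theorem \ref{thm1} (not Theorem \ref{thm2}). This requires several ingredients you have no substitute for: the explicit computation $\wH^{00}=\bigcap_n \frac1n H'$ and the fact that it contains all short tuples; the identifications $\wh G^{00}\cap i(\wH)=i(\wH^{00})$ and $f(\wh G^{00})=\CU^{00}$ giving the exact sequence $0\to\R^k\to\wh G/\wh G^{00}\to\R^n\to 0$; Claim \ref{CD-H}, which uses the long-cone decomposition to show that definable $Y\sub\wH$ with $\lgdim(Y)<k$ have null image in $\wH/\wH^{00}\simeq\R^k$; and Claim \ref{section}, a continuous definable section $s:U_0\to\wh G$ with $s(\CU^{00})\sub\wh G^{00}$, which makes the fibrewise Fubini argument work. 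Compact domination for $\CU$ (locally definable, short) is obtained from the definable quotient of \cite[Theorem 3.9]{ep-defquot} plus the field case, not by citing the definable case directly. Your non-abelian reduction via the almost direct product $G_0\times S$ with $S$ semisimple and short matches the paper, but the abelian core of your argument, as written, cannot be repaired without essentially reproducing the paper's cover-level analysis.
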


\vspace{.5cm}

\subsection{Notation}\label{notation}
Let us finish this section with a couple of notational remarks. Given a group $\la
G, \cdot\ra$ and a set $X\sub G$, we denote, for every $n\in \bb{N}$,
\begin{equation}
X(n)=\overbrace{(XX^{-1}) \cdots  (XX^{-1})}^{n-\text{times}}\notag
\end{equation}

We assume familiarity with the notion of definable compactness. Whenever we write that a set is definably compact, or definably connected, we assume in particular that it is definable.

\vspace{.8cm}

 \noi {\bf Acknowledgements.} We thank Alessandro
Berarducci,  Mario Edmundo and Marcello Mamino for discussions which were helpful
during our work. We thank the referee for a very careful reading of the original
manuscript.

\section{Preliminaries I: locally definable groups, extensions of abelian groups, pushout and pullback}

As mentioned in the introduction, we work in a sufficiently saturated
o-minimal expansion of an ordered group a $\CM=\la M,<,+,\cdots\ra$. However, the only use of this assumption is to guarantee
 a strong version of elimination of imaginaries, which allows us to replace every definable quotient by a definable set. Any
structure in which this is true will be just as good here, or, if we are willing to
work in $\CM^{eq}$, then any o-minimal structure will work.

\subsection{Locally definable groups, compatible subgroups and  definable quotients}

\begin{defn}
(See \cite{ed1}) For a locally definable group $\CU$, we say that $\CV\sub \CU$ is
{\em a compatible subset of $\CU$} if for every definable $X\sub \CU$, the
intersection $X\cap \CV$ is a definable set (note that in this case $\CV$ itself is
a countable union of definable sets).
\end{defn}

Clearly, the only compatible locally definable subsets of
a definable group are the definable ones. Note that if $\phi:\CU\to \CV$ is a locally definable
homomorphism between locally definable groups then $\ker(\phi)$ is a compatible locally definable normal
subgroup of $\CU$. Compatible subgroups are used in order to obtain locally definable quotients. Together
with \cite[Theorem 4.2]{ed1}, we have:

\begin{fact}\label{edmundo} If $\CU$ is a locally definable group and $\CH\sub \CU$ a locally definable normal subgroup,
then $\CH$ is a compatible subgroup of $\CU$ if and only if there exists a locally
definable surjective homomorphism of locally definable  groups $\phi:\CU\to \CV$
whose kernel is $\CH$.
\end{fact}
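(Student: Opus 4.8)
The plan is to prove the two implications separately; only the forward direction requires work, and it is essentially Edmundo's construction of locally definable quotients.

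The direction $(\Leftarrow)$ is immediate and is exactly the remark preceding the statement: if $\phi\colon\CU\to\CV$ is locally definable, surjective, with $\ker(\phi)=\CH$, then for any definable $X\sub\CU$ the restriction $\phi\res X$ is a definable function, so $X\cap\CH$, being the fibre of $\phi\res X$ over the identity, is definable; hence $\CH$ is a compatible subgroup of $\CU$.

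For $(\Rightarrow)$ I would first write $\CU=\bigcup_{n\in\N}X_n$ with the $X_n$ definable, increasing, symmetric, containing $e$, and satisfying $X_nX_n\sub X_{n+1}$; this normalization is always available for a locally definable group (pass to a cofinal subsequence and reindex, using saturation to absorb each product $X_nX_n$ into a later piece). Compatibility gives that each $\CH\cap X_n$ is definable. On each $X_n$ consider the relation $x\sim_n y\iff x^{-1}y\in\CH$; since $x,y\in X_n$ forces $x^{-1}y\in X_nX_n\sub X_{n+1}$, this is equivalent to $x^{-1}y\in\CH\cap X_{n+1}$ and so is a definable equivalence relation on $X_n$ (reflexivity, symmetry and transitivity hold because $\CH$ is a subgroup, after enlarging the index once more to keep all the relevant products inside a single $X_m$). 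Invoking elimination of imaginaries in $\CM$, I obtain a definable set $V_n$ and a definable surjection $\pi_n\colon X_n\to V_n$ whose fibres are precisely the $\sim_n$-classes. Because $x,y\in X_n$ satisfy $x\sim_n y$ iff $x\sim_{n+1}y$ (both say $x^{-1}y\in\CH$), the inclusions $X_n\hookrightarrow X_{n+1}$ descend to definable injections $\iota_n\colon V_n\to V_{n+1}$ with $\iota_n\co\pi_n=\pi_{n+1}\res X_n$. The directed union $\CV=\bigcup_n V_n$ formed along the $\iota_n$ is then a locally definable set, and the $\pi_n$ glue to a locally definable surjection $\pi\colon\CU\to\CV$ with $\pi(u)=\pi(u')$ iff $u^{-1}u'\in\CH$; in particular $\ker(\pi)=\pi^{-1}(\pi(e))=\CH$. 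Finally, since $\CH$ is \emph{normal}, the multiplication of $\CU$ respects the fibres of $\pi$, so there is a unique group operation on $\CV$ making $\pi$ a homomorphism; its restriction to $V_m\times V_m$ is the function induced by $\pi_k$ on a sufficiently large $X_k\supseteq X_mX_m$, hence definable. Thus $\CV$ is a locally definable group and $\pi$ a locally definable surjective homomorphism with kernel $\CH$.

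The main obstacle is pure bookkeeping but genuinely fiddly: one must verify that the $\iota_n$ are injective and mutually coherent (so that $\CV$ is a well-defined directed union of countably many definable sets and $\pi$ is well-defined across the whole tower), and — the key point — that the a priori merely locally definable quotient operation on $\CV$ restricts to an honest \emph{definable} map on each $V_m\times V_m$. Establishing all of this, together with the appeal to a sufficiently strong form of elimination of imaginaries to realize each quotient $X_n/{\sim_n}$ as a definable set, is precisely the content of \cite[Theorem 4.2]{ed1}, which is why the assertion is recorded here as a cited Fact rather than reproved from scratch.
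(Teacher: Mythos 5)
Your proposal is correct and matches the paper's treatment: the backward direction is exactly the remark that kernels of locally definable homomorphisms are compatible, and the forward direction is the quotient construction of \cite[Theorem 4.2]{ed1}, which the paper simply cites and which your sketch (normalized exhaustion, definable equivalence relations via compatibility, elimination of imaginaries, gluing) reproduces in outline before correctly deferring the bookkeeping to that reference. No gaps worth flagging.
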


If $\cal M$ is an o-minimal structure and $\CU\sub M^n$ is a locally definable group
then, by \cite[Theorem 4.8]{BaOt}, it can be endowed with a manifold-like topology
$\tau$, making it into a topological group. Namely, there is a countable collection
$ \{U_i:i\in I\}$ of definable subsets of $\CU$, whose union equals $\CU$, such that
each $U_i$ is in definable bijection with an open subset of $M^k$ ($k=\dim\CU$), and
the transition maps are continuous. Moreover the $U_i$'s and the transition maps are
definable over the same parameters as $\CU$. The group operation and group inverse
are continuous with respect to this induced topology.
 The topology $\tau$ is determined by
the ambient topology of $M^n$ in the sense that at every generic
point of $\CU$ the two topologies coincide. From
now on, whenever we refer to a topology on $G$, it is $\tau$ we are considering.

\begin{defn} (See \cite{BE}) In an o-minimal structure, a locally definable group $\CU$ is called {\em
connected} if there is no locally definable compatible subset $\emptyset\subsetneqq
\CV\subsetneqq  \CU$ which is both closed and open with respect to the group
topology.
\end{defn}
\begin{remark}\label{connected}
It is easy to see that, in an o-minimal structure, if a locally definable group $\CU$ is
generated by a definably connected set which contains the identity, then it is connected.
\end{remark}

\begin{defn} Given a locally definable group $\CU$ and $\Lambda_0\sub \CU$ a
normal subgroup, we say that $\CU/\Lambda_0$ is \emph{definable} if there
is a definable group $\overline{K}$ and a surjective locally definable
homomorphism $\mu:\CU\to \overline{K}$ whose kernel is $\Lambda_0$.
\end{defn}

We now quote Theorem 3.10 from \cite{ep-defquot} (in a restricted case).

\begin{fact}\label{fromep1}
Let $\CU$ be a connected, abelian locally definable group, which is generated by a definably compact set. Assume that $X\sub \CU$
is a definable set and $\Lam\leqslant \CU$ is a finitely generated subgroup such
that $X+\Lam=\CU$.

Then there is a subgroup $\Lam'\sub \Lam$ such that $\CU/\Lam'$ is a definably compact definable group.
\end{fact}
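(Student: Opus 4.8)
\emph{Proof strategy.} The plan is to reduce, through a chain of harmless normalisations, to the case in which $\Lam$ is a \emph{discrete} subgroup of $\CU$ --- one meeting every definably compact subset of $\CU$ in a finite set --- and then to observe that $\CU/\Lam$ is already the group we are after. Throughout, ``$\CU$ generated by a definably compact set'' will be used in the form $\CU=\bigcup_{n\in\N}X(n)$ with the $X(n)$ definably compact and increasing; the elementary consequence I will lean on is that every \emph{definable} subset of $\CU$ lies inside a single $X(n)$ (by saturation). In particular a discrete subgroup is automatically compatible, its intersection with any definable set being finite.

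First I would normalise $X$: replacing it by $K(N)$, where $K=-K\ni 0$ is a definably compact generating set of $\CU$ and $N$ is chosen so large that $X$ and the finitely many generators of $\Lam$ all lie in $K(N)$, we may assume $X$ is definably compact and symmetric, contains $0$, generates $\CU$, and still $X+\Lam=\CU$. Next I would peel off the ``bounded part'' of $\Lam$: call $\lam\in\CU$ \emph{bounded} if $\Z\lam\sub X(n)$ for some $n$, and let $\Lam_b\le\Lam$ be the set of bounded elements, which is a subgroup (if $\Z\lam\sub X(a)$ and $\Z\mu\sub X(b)$ then $\Z(\lam+\mu)\sub X(a+b)$). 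Since $\Lam$ is finitely generated, so is $\Lam_b$, whence $\Lam_b\sub X(M)$ for a single $M$; one also checks that $\Lam/\Lam_b$ is torsion-free (the preimage in $\Lam$ of any torsion of $\Lam/\Lam_b$ is again bounded), so $\Lam=\Lam_b\oplus\Lam_u$ with $\Lam_u$ free and all of its nonzero elements unbounded. Now $X+\Lam_b$ sits inside a single definably compact $K':=X(M+1)$, and
$$\CU=X+\Lam=(X+\Lam_b)+\Lam_u\ \sub\ K'+\Lam_u\ \sub\ \CU,$$
so $\CU=K'+\Lam_u$. As it suffices to find $\Lam'\le\Lam_u$, I may now replace $X$ by $K'$ and $\Lam$ by $\Lam_u$, reaching the situation: $X$ definably compact, $X+\Lam=\CU$, and \emph{every nonzero element of $\Lam$ is unbounded} (equivalently, $\Lam$ lies in no single $X(n)$).

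The crux is then the claim: \emph{a finitely generated subgroup of $\CU$ whose nonzero elements are all unbounded is discrete.} Granting it, $\Lam$ is discrete, hence compatible, so by Fact~\ref{edmundo} there is a locally definable surjection $q\colon\CU\to\CU/\Lam$ with kernel $\Lam$; since $q$ kills $\Lam$,
$$\CU/\Lam\ =\ q(\CU)\ =\ q(X+\Lam)\ =\ q(X),$$
which is a \emph{definable} set because $q$ is locally definable and $X$ definable. Hence $\CU/\Lam$ is a definable group, and it is definably compact, being the (definable, continuous) image of the definably compact set $X$. Taking $\Lam'=\Lam_u$ --- a subgroup of the original $\Lam$ --- finishes the proof; the degenerate case $\Lam_u=0$ needs no separate treatment, the bookkeeping then merely recording that $\CU=K'$ is itself a definably compact definable group.

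The only substantial step is the ``crux'' above, and that is where I expect the difficulty to lie. I would prove its contrapositive: if $\Lam$ is not discrete, then $\Lam\cap X(n_0)$ is infinite for some $n_0$; enclosing the definably compact set $X(n_0)$ in a box $B=\prod_i[-c_i,c_i]$ and using that each coordinate function $m\mapsto(mv)_i$ is monotone in $m\in\Z$, so that $\{m:mv\in B\}$ is a convex symmetric subset of $\Z$, a pigeonhole among the infinitely many lattice points of $\Lam$ in $B$ yields a nonzero $v\in\Lam$ with $\Z v$ confined to a box. The delicate part is then to promote ``$\Z v$ confined to a box in $M^n$'' to ``$\Z v\sub X(n)$ for some single $n$'', i.e.\ to exhibit a genuinely \emph{bounded} nonzero element of $\Lam$, contradicting the hypothesis; this is exactly where one must use that $\CU$ is an increasing union of definably compact pieces together with the hypothesis $X+\Lam=\CU$ (which rules out the pathological configurations). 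I expect this lemma, and not the surrounding scaffolding, to be the main obstacle.
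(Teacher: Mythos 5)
The reduction you set up (normalising $X$, splitting off the bounded part of $\Lam$, and observing that a discrete subgroup is compatible and yields a definably compact definable quotient $q(X)$) is sensible, but the lemma you yourself identify as the crux --- \emph{a finitely generated subgroup of $\CU$ all of whose nonzero elements are unbounded is discrete} --- is false, and the argument collapses there. Let $\CM$ be a sufficiently saturated real closed field, let $\CU=\bigcup_n[-n,n]$ be the group of finite elements, $X=[0,1]$, and $\Lam=\Z+\Z\sqrt{2}\sub\CU$. Then $\CU$ is connected, abelian and generated by the definably compact set $[-1,1]$, and $X+\Lam\supseteq[0,1]+\Z=\CU$, so all hypotheses of the Fact hold. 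For $(a,b)\neq(0,0)$ one has $|a+b\sqrt{2}|>q$ for some positive rational $q$, hence $|m(a+b\sqrt{2})|>|m|q$ escapes every $[-n,n]$: in your notation $\Lam_b=0$ and $\Lam_u=\Lam$. Yet $\Lam\cap[-1,1]$ is infinite (the convergents of $\sqrt{2}$ give $|p_k-q_k\sqrt{2}|<1/q_k$), so $\Lam_u$ is not discrete; it is not even compatible, since an infinite countable subset of an interval is not definable in an o-minimal structure, and $\CU/\Lam_u$ is not a locally definable group at all. The Fact is of course still true here with $\Lam'=\Z$, but the example shows that the correct $\Lam'$ must in general have strictly smaller rank than $\Lam_u$, and that ``discard the bounded elements'' is not the right selection principle: unboundedness of each element separately does not prevent the group they generate from accumulating inside a definably compact set. (A secondary defect: your sketch of the crux assumes the group operation of $\CU$ is the ambient coordinatewise addition, so that $m\mapsto (mv)_i$ is monotone; for the locally definable groups to which this Fact is applied, such as universal covers of quotients, that is not the case.)

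Note that the paper does not prove this statement; it quotes it as Theorem 3.10 of \cite{ep-defquot}. The route taken there is essentially the one your counterexample forces: one first produces the type-definable subgroup $\CU^{00}$ and shows that $\CU/\CU^{00}$ with the logic topology is a Lie group isomorphic to some $\R^n$; the hypothesis $X+\Lam=\CU$ then says that the image of $\Lam$ under $\pi:\CU\to\R^n$ is cocompact (since $\pi(X)$ is compact), so it contains a genuine lattice $\simeq\Z^n$, and pulling back a basis of that lattice gives the subgroup $\Lam'\sub\Lam$ of rank exactly $n$ for which one then verifies that $\CU/\Lam'$ is a definably compact definable group. Your outer scaffolding matches the final step of that argument; the missing content is entirely in how $\Lam'$ is chosen, and that choice genuinely requires the $\CU^{00}$ (or some equivalent) machinery rather than a boundedness dichotomy on elements of $\Lam$.
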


\subsection{Pushouts and definability}

{\em In the following three subsections, all groups are assumed to be abelian and
all arrows represent group homomorphisms.}

Several steps of the proof require us to construct extensions of abelian groups with
certain maps attached to them. All constructions are standard in the classical
theory of abelian groups but because we are concerned here with definability issues
we review the basic notions (see \cite{Fuchs} for the classical treatment). The
proofs of these basic results are given in the appendix. Although we chose to
present the constructions below in the more common language of pushouts and
pullbacks, it is also possible to carry them out in the less canonical (but possibly
more constructive) language of sections and cocycles.
\begin{defn}
Given homomorphisms
$$\begin{diagram}
\node{A}\arrow{e,t}{\al}\arrow{s,l}{\beta}\node{B}\\
\node{C}
\end{diagram}$$ the triple $(D,\gamma, \delta)$ (or just $D$) is called
{\em a pushout} (of $B$ and $C$ over $A$ via $\alpha,\beta,\gamma,\delta$) if the
following diagram commutes
$$\begin{diagram}
\node{A}\arrow{e,t}{\al}\arrow{s,l}{\beta}\node{B}\arrow{s,r}{\gamma}\\
\node{C}\arrow{e,t}{\delta}\node{D}
\end{diagram}$$
and for every commutative diagram
\begin{equation}\label{eq-push1}\begin{diagram}
\node{A}\arrow{e,t}{\al}\arrow{s,l}{\beta}\node{B}\arrow{s,r}{\gamma'}\\
\node{C}\arrow{e,t}{\delta'}\node{D'}
\end{diagram}\end{equation} there is a unique $\phi:D\to D'$
such that $\phi\gamma=\gamma'$ and $\phi\delta=\delta'$.

If $A,B,C,D$ and the associated maps are (locally) definable, and if for every
(locally) definable $D', \gamma',\delta'$ there is a (locally) definable
$\phi:D\to D'$ as required then we say that {\em the pushout is (locally)
definable}.
\end{defn}

\begin{prop}\label{pushout1}
Assume that we are given the following diagram
$$\begin{diagram}
\node{A}\arrow{e,t}{\al}\arrow{s,l}{\beta}\node{B}\\
\node{C}
\end{diagram}$$

\noi (i) Let $(D, \gamma, \delta)$ be a pushout. Then
$$\ker(\gamma)=\alpha(\ker(\beta)).$$

Moreover, if $\beta$ is surjective, then so is $\gamma$. If $\alpha $ is injective,
then so is $\delta$.

\noindent (ii)  Suppose that all data are definable. Then there exists a definable pushout
$(D,\gamma,\delta)$, which is unique up to definable isomorphism.

\noindent (iii) Suppose that all data are locally definable and $\al(A)$ is a compatible subgroup of $B$. Then there exists a locally definable pushout $(D,\gamma,\delta)$, which is unique up to  locally definable isomorphism.

Assume now that $\al$ is injective. If we let $E=B/\al(A)$ and $\pi:B\to E$ the projection map then there is a locally definable surjection
$\pi':D\to E $ such that the diagram below commutes and both sequences are exact. In
particular, $\ker(\pi')=\delta(C)$ is a compatible subgroup of $D$.
\[
\begin{diagram}
\node{0}\arrow{e}\node{A}\arrow{s,l}{\beta} \arrow{e,t}{\al}\node{B}
\arrow{s,r}{\gamma}\arrow{e,t}{\pi}
\node{E} \arrow{s,r}{id_E}\arrow{e} \node{0}\\
\node{0}\arrow{e}\node{C} \arrow{e,t}{\delta}\node{D} \arrow{e,t}{\pi'}\node{E}
\arrow{e} \node{0}
\end{diagram}
\]

\end{prop}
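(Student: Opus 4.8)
\textbf{Proof proposal for Proposition \ref{pushout1}.}

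The plan is to treat the three claims in order, building each on the previous one, and to handle the definability refinements (ii) and (iii) by the explicit construction of the pushout as a quotient. For (i), I would first construct a concrete pushout: set $D = (B \oplus C)/N$, where $N = \{(\alpha(a), -\beta(a)) : a \in A\}$, and let $\gamma(b) = (b,0) + N$, $\delta(c) = (0,c) + N$. One checks the universal property directly: given a competing cocone $(D',\gamma',\delta')$, the map $(b,c) \mapsto \gamma'(b) + \delta'(c)$ kills $N$ and descends to the required $\phi$, whose uniqueness follows since $D$ is generated by $\gamma(B) + \delta(C)$. Then $\ker(\gamma) = \{b : (b,0) \in N\} = \alpha(\ker \beta)$, reading off the definition of $N$; note one needs $\alpha$ restricted to $\ker\beta$ here, and the image is exactly $\alpha(\ker\beta)$ since $(\alpha(a),-\beta(a)) = (\alpha(a),0)$ iff $\beta(a)=0$. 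Surjectivity of $\gamma$ when $\beta$ is onto: given $(b,c)+N$, pick $a$ with $\beta(a)=c$ and subtract $(\alpha(a),-\beta(a))$ to land in $\gamma(B)$. Injectivity of $\delta$ when $\alpha$ is injective: if $(0,c)\in N$ then $c = -\beta(a)$ with $\alpha(a)=0$, so $a=0$, so $c=0$.

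For (ii), the construction $D = (B\oplus C)/N$ is visibly definable when $A,B,C,\alpha,\beta$ are definable (the direct sum is definable, $N$ is a definable subgroup because $A$ is definable and $\alpha,\beta$ are definable functions, and the quotient is definable by our standing elimination-of-imaginaries hypothesis), and $\gamma,\delta$ are definable. Uniqueness up to definable isomorphism is then the universal property applied in both directions: any other definable pushout $D'$ yields definable $\phi: D \to D'$ and $\phi': D' \to D$, and $\phi'\phi$, $\phi\phi'$ are identities by the uniqueness clause of the universal property. For (iii), the same construction works \emph{provided} the quotient $(B\oplus C)/N$ is locally definable, i.e.\ $N$ is a compatible subgroup of $B \oplus C$; this is where the hypothesis that $\alpha(A)$ is compatible in $B$ is used — one shows $N$ is compatible by intersecting with an arbitrary definable $X \times Y \subseteq B \oplus C$ and using that $\alpha(A) \cap X$ is definable, so that the relevant slice of $A$ (via a definable section of $\alpha$ onto $\alpha(A)\cap X$, or more carefully via compatibility of $\ker\alpha$ as well) is definable. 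By Fact \ref{edmundo}, $(B\oplus C)/N$ is then a genuine locally definable group. Local definability of $\gamma,\delta$ and the universal property go through as before.

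For the final diagram in (iii): assume $\alpha$ injective, put $E = B/\alpha(A)$ — this quotient exists and is locally definable precisely because $\alpha(A)$ is a compatible subgroup of $B$ — and let $\pi:B\to E$ be the projection. I would define $\pi': D \to E$ by $(b,c)+N \mapsto \pi(b)$; this is well-defined since $N$'s first coordinates lie in $\alpha(A) = \ker\pi$, and it is locally definable as the descent of a locally definable map. The top row is exact by definition of $E$. For the bottom row: $\pi'\circ\delta(c) = \pi(0) = 0$, and conversely if $\pi'((b,c)+N) = 0$ then $b \in \alpha(A)$, say $b = \alpha(a)$, and $(b,c)+N = (0, c+\beta(a)) + N = \delta(c+\beta(a))$, so $\ker(\pi') = \delta(C)$; surjectivity of $\pi'$ follows from surjectivity of $\pi$. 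That $\ker(\pi') = \delta(C)$ is compatible in $D$ is then immediate from Fact \ref{edmundo} applied to the locally definable surjection $\pi'$. Commutativity of the square $\pi = \pi'\circ\gamma$ is a direct check: $\pi'(\gamma(b)) = \pi'((b,0)+N) = \pi(b)$.

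The main obstacle I anticipate is the compatibility bookkeeping in (iii): verifying that $N \subseteq B \oplus C$ is a compatible subgroup, and hence that the quotient is honestly locally definable rather than merely $\bigvee$-definable, requires care because $A$ itself need not be definable and one only knows $\alpha(A)$ is compatible in $B$ (not, a priori, that $\ker\alpha$ is well-behaved). The right move is probably to first reduce to the case $\alpha$ injective for the quotient-existence part too, or to observe that $N \cong A$ via $a \mapsto (\alpha(a),-\beta(a))$ and that under this identification compatibility of $N$ in $B\oplus C$ is equivalent to compatibility of $A$'s image, which one extracts from compatibility of $\alpha(A)$ in $B$ together with the fact that $\beta$ is locally definable. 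Everything else is routine diagram-chasing plus invocations of Fact \ref{edmundo} and the standing elimination-of-imaginaries assumption.
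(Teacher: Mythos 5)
Your proposal is correct and follows essentially the same route as the paper: the explicit construction $D=(B\times C)/N$ with $N=\{(\alpha(a),-\beta(a)):a\in A\}$, definability of the quotient via elimination of imaginaries (Fact \ref{edmundo}), compatibility of $N$ in $B\times C$ extracted from compatibility of $\alpha(A)$ in $B$ together with injectivity of $\alpha$, and the map $\pi'([b,c])=\pi(b)$ for the final diagram. The point you flag as the main obstacle --- that compatibility of $N$ needs $\alpha$ injective (or an identification $N\cong A$) rather than just compatibility of $\alpha(A)$ --- is exactly how the paper's appendix proof handles it.
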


We also need the following general fact, for which we could not find a reference
(see appendix for proof):
\begin{lemma}\label{pushout2} Assume that we are given the following commutative diagram
\begin{equation}
\begin{diagram}
\node{A}\arrow{e,t}{\alpha}\arrow{s,l}{\beta}\node{B}\arrow{s,l}{\gamma}\\
\node{C}\arrow{e,t}{\delta}\arrow{s,l}{\eta}\node{D}\arrow{s,l}{\mu}\\
\node{E}\arrow{e,t}{\xi}\node{F}
\end{diagram}
\end{equation}
with $D$ the pushout of $B$ and $C$ over $A$ (via $\alpha,\beta,\gamma,\delta$), and
$F$ the pushout of $B$ and $E$ over $A$ (via $\alpha$, $\eta\beta$, $\mu\gamma$ and
$\xi$). Then $F$ is also the pushout of $E$ and $D$ over $C$ (via
$\eta,\delta,\mu,\xi$).
\end{lemma}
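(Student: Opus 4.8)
The statement is the classical ``pasting lemma'' for pushouts, so the plan is to verify directly that $F$, equipped with the maps $\mu$ and $\xi$, satisfies the universal property of the pushout of $E$ and $D$ over $C$. Throughout I will use only the commutativity relations read off the given diagram, namely $\gamma\al=\delta\beta$ and $\mu\delta=\xi\eta$ (the commutativity of the outer rectangle, $\mu\gamma\al=\xi\eta\beta$, then follows and is in any case built into the hypothesis that $F$ is the pushout of $B$ and $E$ over $A$ via $\al,\eta\beta,\mu\gamma,\xi$), together with the existence-and-uniqueness clauses in the definition of pushout applied to the top square and to the outer rectangle.

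\medskip

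\emph{Construction of the comparison map.} Suppose we are given a group $F'$ and homomorphisms $\gamma'\colon D\to F'$, $\delta'\colon E\to F'$ with $\gamma'\delta=\delta'\eta$; the task is to produce a unique $\phi\colon F\to F'$ with $\phi\mu=\gamma'$ and $\phi\xi=\delta'$. First I would apply the universal property of $F$ as the pushout of $B$ and $E$ over $A$ to the pair of maps $\gamma'\gamma\colon B\to F'$ and $\delta'\colon E\to F'$: these are compatible over $A$ because $\gamma'\gamma\al=\gamma'\delta\beta=\delta'\eta\beta=\delta'(\eta\beta)$, using $\gamma\al=\delta\beta$ and then $\gamma'\delta=\delta'\eta$. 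This yields a unique $\phi\colon F\to F'$ with $\phi(\mu\gamma)=\gamma'\gamma$ and $\phi\xi=\delta'$. It then remains to upgrade the first equality to $\phi\mu=\gamma'$, and for this I would invoke the universal property of $D$ as the pushout of $B$ and $C$ over $A$: the maps $\gamma'\gamma\colon B\to F'$ and $\gamma'\delta\colon C\to F'$ are compatible over $A$ (again by $\gamma\al=\delta\beta$), and both $\phi\mu$ and $\gamma'$ restrict to this pair --- indeed $\phi\mu\gamma=\gamma'\gamma$ was just shown, while $\phi\mu\delta=\phi\xi\eta=\delta'\eta=\gamma'\delta$ using $\mu\delta=\xi\eta$ and $\gamma'\delta=\delta'\eta$ --- so by uniqueness in the universal property of $D$ we conclude $\phi\mu=\gamma'$.

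\medskip

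\emph{Uniqueness and definability.} If $\phi'\colon F\to F'$ also satisfies $\phi'\mu=\gamma'$ and $\phi'\xi=\delta'$, then $\phi'(\mu\gamma)=\gamma'\gamma=\phi(\mu\gamma)$ and $\phi'\xi=\delta'=\phi\xi$, so $\phi'=\phi$ by the uniqueness clause in the universal property of $F$ as a pushout over $A$. Finally, if the hypotheses are read in the definable (resp.\ locally definable) category, the auxiliary maps used above --- the composite $\gamma'\gamma$ and the comparison map $\phi$, the latter produced by the definable (resp.\ locally definable) universal property of $F$ --- are themselves definable (resp.\ locally definable), so $(F,\mu,\xi)$ is a definable (resp.\ locally definable) pushout of $E$ and $D$ over $C$. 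I do not anticipate a real obstacle here: the statement is formal, and the only points requiring care are to route the identification $\phi\mu=\gamma'$ through the \emph{uniqueness} half of the universal property of $D$ rather than mere existence, and to check at each invocation that the square to which a universal property is applied genuinely commutes.
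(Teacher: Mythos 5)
Your proof is correct and follows essentially the same route as the paper's: first invoke the universal property of $F$ as the pushout of $B$ and $E$ over $A$ applied to the compatible pair $(\gamma'\gamma,\delta')$, then upgrade $\phi\mu\gamma=\gamma'\gamma$ to $\phi\mu=\gamma'$ via the uniqueness clause of the universal property of $D$. The only differences are notational (the paper writes $\mu',\xi'$ for your $\gamma',\delta'$) plus your explicit treatment of uniqueness and definability, which the paper leaves implicit.
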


\subsection{Pullbacks and definability}

\begin{defn}
Given homomorphisms $$\begin{diagram}
\node{} \node{B}\arrow{s,r}{\al}\\
\node{C}\arrow{e,b}{\beta}\node{A}
\end{diagram}$$
the triple $(D,\gamma,\delta)$ (or just $D$) is called {\em a pullback} (of $B$ and
$C$ over $A$ via $\alpha,\beta,\gamma,\delta$) if the following diagram commutes
$$\begin{diagram}
\node{D}\arrow{e,t}{\gamma}\arrow{s,l}{\delta} \node{B}\arrow{s,r}{\al}\\
\node{C}\arrow{e,b}{\beta}\node{A}
\end{diagram}$$ and  for every commutative diagram
\begin{equation}\label{eq-pull1}\begin{diagram}
\node{D'}\arrow{e,t}{\gamma'}\arrow{s,l}{\delta'} \node{B}\arrow{s,r}{\al}\\
\node{C}\arrow{e,b}{\beta}\node{A}
\end{diagram}\end{equation} there is a unique $\phi:D'\to D$ such that
$\gamma\phi=\gamma'$ and $\delta\phi=\delta'$.

If $A,B,C,D$ and the associated maps are (locally) definable, and if for every
(locally) definable $D', \gamma',\delta'$ there is a (locally) definable
$\phi:D'\to D$ as required then we say that {\em the pullback is (locally)
definable}.
\end{defn}

\begin{prop}\label{pullback1}
Assume that we are given the following  diagram
$$\begin{diagram}
\node{} \node{B}\arrow{s,r}{\al}\\
\node{C}\arrow{e,b}{\beta}\node{A}
\end{diagram}$$

\noi (i) Let $(D, \gamma, \delta)$ be a pullback. Then
$$\gamma(\ker(\delta))=\ker(\alpha).$$

Moreover, if $\beta$ is surjective, then so is $\gamma$. If $\alpha $ is injective,
then so is $\delta$.

\noindent (ii) Suppose that all data are definable. Then there exists a definable pullback
$(D,\gamma,\delta)$, which is unique up to definable isomorphism.

\noindent (iii) Suppose that all data are locally definable. Then there exists a locally definable pullback $(D,\gamma,\delta)$, which is unique up to locally definable isomorphism.

Assume now that $\beta$ is surjective. Let $G=\ker(\gamma)$ and $H=\ker(\beta)$. Then $G$, $H$ are
locally definable and compatible in $D$ and $C$, respectively. Moreover, there is a
locally definable isomorphism $j:G\to H$ such that the following diagram commutes
and both sequences are exact.
$$\begin{diagram}\node{0}\arrow{e}\node{G}\arrow{s,l}{j}\arrow{e,t}{id_G}\node{D}\arrow{e,t}{\gamma}\arrow{s,r}{\delta}\node{B}
\arrow{s,r}{\al}\arrow{e}\node{0}\\
\node{0}\arrow{e}\node{H}\arrow{e,t}{id_H}\node{C}\arrow{e,t}{\beta}\node{A}\arrow{e}\node{0}
\end{diagram}
$$

\end{prop}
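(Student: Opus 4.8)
The plan is to build the pullback concretely as the fibre product
$$D=\{(b,c)\in B\times C:\alpha(b)=\beta(c)\},$$
with $\gamma,\delta$ the restrictions to $D$ of the coordinate projections of $B\times C$ onto $B$ and onto $C$. The square commutes by construction, and given any competing datum $(D',\gamma',\delta')$ the map $\phi:D'\to D$ defined by $\phi(x)=(\gamma'(x),\delta'(x))$ takes values in $D$ precisely because $\alpha\gamma'=\beta\delta'$; it satisfies $\gamma\phi=\gamma'$ and $\delta\phi=\delta'$, and it is the unique such map because $\gamma$ and $\delta$ are jointly injective on $D$. This establishes the existence assertion in (ii) and (iii) at once, and uniqueness up to (locally definable) isomorphism is the usual formal consequence of the universal property: compose the two mediating maps and invoke their uniqueness.

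Item (i) is a direct computation with this model: $\ker(\delta)=\{(b,0):\alpha(b)=0\}$, so $\gamma(\ker(\delta))=\ker(\alpha)$; if $\beta$ is onto then for each $b\in B$ one picks $c$ with $\beta(c)=\alpha(b)$, giving $(b,c)\in D$ with $\gamma(b,c)=b$, so $\gamma$ is onto; and if $\alpha$ is injective then $\delta(b,c)=0$ forces $\alpha(b)=\beta(0)=0$, hence $b=0$, so $\delta$ is injective.

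For definability: when $\alpha,\beta$ are definable, $D$ is the subset of $B\times C$ cut out by the definable condition $\alpha(b)=\beta(c)$, hence a definable group, and the map $\phi$ above is definable; this gives (ii). When $\alpha,\beta$ are only locally definable, write $B=\bigcup_i B_i$ and $C=\bigcup_j C_j$ as countable directed unions of definable sets; then each $D\cap(B_i\times C_j)$ is definable — this is exactly where local definability of $\alpha$ and $\beta$ enters — and these form a countable directed family covering $D$ inside the fixed ambient space $M^{n_1+n_2}$, with group addition definable on each product of two pieces, so $D$ is a locally definable group and $\gamma,\delta$ are locally definable. For the mediating map: if $X\sub D'$ is definable then $\gamma'(X)\sub B_i$ and $\delta'(X)\sub C_j$ for some $i,j$ (since $\gamma',\delta'$ are locally definable and $\CM$ is saturated), so $\phi\res X$ maps into the definable set $D\cap(B_i\times C_j)$ and is definable; hence $\phi$ is locally definable. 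This gives (iii).

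For the last assertion, assume $\beta$ is onto. Then $G=\ker(\gamma)=\{(0,c):\beta(c)=0\}$, and $\delta$ restricts to a group isomorphism of $G$ onto $H=\ker(\beta)$; set $j=\delta\res G$, which is locally definable, as is its inverse $c\mapsto(0,c)$. Being kernels of the locally definable homomorphisms $\gamma$ and $\beta$, the subgroups $G$ and $H$ are compatible in $D$ and in $C$ respectively. The top row $0\to G\to D\to B\to 0$ is exact: $\gamma$ is onto by (i) since $\beta$ is, and its kernel is $G$ by definition; the bottom row is exact by hypothesis; the right square is the pullback square; and the left square commutes by the very definition of $j$. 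The only step demanding any care is the verification that $D$ is a genuine locally definable group (fixed ambient dimension, countable directed cover, definable operations) and that the mediating map remains locally definable; everything else is the standard fibre-product argument, so I do not expect a real obstacle.
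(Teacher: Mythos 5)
Your proposal is correct and follows essentially the same route as the paper's Appendix A proof: the explicit fibre product $D=\{(b,c):\alpha(b)=\beta(c)\}$ with coordinate projections, the mediating map $\phi=(\gamma',\delta')$, and the identification $j:\ker(\gamma)\to\ker(\beta)$ via $(0,c)\mapsto c$, with compatibility coming from these being kernels of locally definable homomorphisms. Your treatment of local definability (covering $D$ by the definable pieces $D\cap(B_i\times C_j)$ and using saturation to see that $\phi$ restricted to a definable set lands in one such piece) just spells out what the paper leaves as a one-line remark.
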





\subsection{Additional lemmas}

\begin{lemma}\label {ext3}
Assume that the sequence $$\begin{diagram}
\node{0}\arrow{e}\node{A}\arrow{e,t}{i}\node{B}\arrow{e,t}{\pi}\node{C}\arrow{e}
\node{0}
\end{diagram}
$$ is exact and that we have a surjective homomorphism $\mu:C\to D$. Let $A'=\ker(\mu
\pi)\sub B$. Then  the following diagram commutes and  both sequences are exact. If
all data are locally definable then so is $A'$ and the associated maps.
\[
\begin{diagram}
\node{0}\arrow{e}\node{A}\arrow{s,l}{i} \arrow{e,t}{i}\node{B}
\arrow{s,r}{id}\arrow{e,t}{\pi}\node{C}
 \arrow{s,r}{\mu}\arrow{e} \node{0}\\
\node{0}\arrow{e}\node{A'} \arrow{e,t}{id}\node{B} \arrow{e,t}{\mu\pi}\node{D}
\arrow{e} \node{0}
\end{diagram}
\]

\end{lemma}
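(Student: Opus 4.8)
The statement of Lemma~\ref{ext3} is a purely diagram-theoretic one, so the plan is to unwind the definition of $A' = \ker(\mu\pi)$ and check commutativity and exactness directly, then address the definability clause at the end. First I would observe that since $\pi$ is surjective and $\mu$ is surjective, the composite $\mu\pi : B \to D$ is surjective, which immediately gives exactness at $D$ in the bottom row; exactness at $B$ in the bottom row is, by definition, the statement that the image of the inclusion $A' \hookrightarrow B$ equals $\ker(\mu\pi)$, which holds tautologically since $A' = \ker(\mu\pi)$ and the left map is that very inclusion. Exactness at $A'$ is trivial as the left map is injective. So the bottom row is exact essentially by construction.

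Next I would verify the two squares commute. The right-hand square commutes because $\mu \circ \pi = (\mu\pi) \circ \mathrm{id}_B$, which is a tautology. For the left-hand square: the top-left map is $i : A \to B$, the left vertical map is $i : A \to A'$, and the bottom map is $\mathrm{id} : A' \to B$; so I need $i(A) \subseteq A'$, i.e.\ $i(A) \subseteq \ker(\mu\pi)$. This is where the exactness of the original sequence enters: $\pi \circ i = 0$ by exactness at $B$ in the top row, hence $\mu\pi\circ i = 0$, so indeed $i(A) \subseteq \ker(\mu\pi) = A'$, and the map $i : A \to A'$ is well-defined; the square then commutes on the nose. I should also note $i : A \to A'$ is injective since $i : A \to B$ is, so the left vertical map is a mono (though the lemma does not explicitly demand this, it is implicit in writing it as part of a short exact sequence context).

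Finally, for the definability clause: if all the data in the hypothesis are locally definable, then $\mu\pi$ is a composite of locally definable homomorphisms, hence locally definable, and therefore $A' = \ker(\mu\pi)$ is a compatible locally definable normal subgroup of $B$ by the remark following the definition of compatible subsets (a kernel of a locally definable homomorphism is compatible and locally definable). The inclusion $A' \hookrightarrow B$ is then locally definable as the restriction of the identity to a locally definable set, the induced map $i : A \to A'$ is locally definable as a corestriction of the locally definable map $i : A \to B$, and $\mu\pi : B \to D$ is locally definable as already noted.

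I do not expect any genuine obstacle here; the only point requiring a moment's care is keeping straight which arrow is which in the left square and invoking top-row exactness ($\pi i = 0$) to see that $i$ lands in $A'$. Everything else is formal diagram chasing, and the definability statements follow from the standard facts about kernels of locally definable homomorphisms recalled earlier in this section.
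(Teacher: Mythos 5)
Your proof is correct and matches the paper's treatment, which simply declares the lemma trivial; you have just written out the routine verification (bottom-row exactness by construction, the left square via $\pi i=0$ giving $i(A)\subseteq\ker(\mu\pi)$, and definability of $A'$ as the kernel of the locally definable composite $\mu\pi$). Nothing further is needed.
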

\proof This is trivial.\qed

\begin{lemma}\label{ext4} Assume that we have surjective homomorphisms $F:B\to G$ and $F':B\to G'$ with $\ker(F')\sub \ker(F)$. Then there
is a canonical surjective homomorphism $h:G'\to G$, given by $h(g')=g$ if and only
if there exists $b\in B$ with $F'(b)=g'$ and $F(b)=g$. The kernel of $h$ equals
$F'(\ker (F))$ and if all data are locally definable then so is $h$.
\end{lemma}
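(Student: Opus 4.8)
The plan is to define $h$ by the stated recipe and verify successively that it is well-defined, a homomorphism, surjective, has the claimed kernel, and is locally definable. First I would check well-definedness: suppose $b,b'\in B$ both satisfy $F'(b)=F'(b')=g'$; then $b-b'\in\ker(F')\subseteq\ker(F)$, so $F(b)=F(b')$, which shows that the value $g=F(b)$ depends only on $g'$ and not on the chosen preimage. Since $F'$ is surjective, every $g'\in G'$ has at least one preimage $b$, so $h$ is defined on all of $G'$. That $h$ is a homomorphism is immediate: if $F'(b_1)=g_1'$ and $F'(b_2)=g_2'$ then $F'(b_1+b_2)=g_1'+g_2'$ and $F(b_1+b_2)=h(g_1')+h(g_2')$, so $h(g_1'+g_2')=h(g_1')+h(g_2')$. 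Surjectivity follows from surjectivity of $F$: given $g\in G$, pick $b\in B$ with $F(b)=g$ and set $g'=F'(b)$; then $h(g')=g$. In diagram terms, this is exactly the factorization $h\circ F' = F$.

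For the kernel, I would argue both inclusions. If $g'\in\ker(h)$, choose $b$ with $F'(b)=g'$; then $F(b)=h(g')=0$, so $b\in\ker(F)$ and hence $g'=F'(b)\in F'(\ker(F))$. Conversely, if $g'=F'(b)$ with $b\in\ker(F)$, then by definition $h(g')=F(b)=0$. Thus $\ker(h)=F'(\ker(F))$.

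Finally, local definability of $h$: we must show that for every definable $Y'\subseteq G'$ and every definable $Z\subseteq G$, the set $\mathrm{graph}(h)\cap(Y'\times Z)$ is definable. Since $F'$ is a locally definable surjection between locally definable groups, there is a definable set $Y\subseteq B$ with $F'(Y)\supseteq Y'$ (by saturation, a single definable piece suffices to cover the definable set $Y'$); replacing $Y$ by $Y\cap (F')^{-1}(Y')$, which is definable because $F'\res Y$ is definable, we may assume $F'(Y)=Y'$. Then
$$
\mathrm{graph}(h)\cap(Y'\times Z)=\{(F'(b),F(b)) : b\in Y,\ F(b)\in Z\},
$$
which is the image under the definable map $b\mapsto(F'(b),F(b))$ of the definable set $Y\cap F^{-1}(Z)$, hence definable. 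The main point to be careful about — the only place the argument is not entirely mechanical — is this last step: one must invoke saturation to replace the countable union defining $F'$ by a single definable set covering the given definable $Y'$, so that the restrictions of $F'$ and $F$ in play are genuinely definable and the image set comes out definable rather than merely locally definable.
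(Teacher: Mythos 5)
Your proposal is correct and follows essentially the same route as the paper: the algebraic part is the standard third-isomorphism-type factorization $h\circ F'=F$, and for local definability both arguments use saturation to find a single definable $Y\sub B$ with $F'(Y)\supseteq Y'$ and then express $\mathrm{graph}(h)\cap(Y'\times Z)$ as the image of a definable subset of $Y$ under the definable map $b\mapsto(F'(b),F(b))$. Your write-up is in fact slightly more explicit than the paper's about why that image is exactly the graph intersection.
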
 \proof Algebraically, this is just the fact that if $B_1\sub B_2\sub B$ then
there is a canonical homomorphism  $h:B/B_1\to B/B_2$, whose kernel is $B_2/B_1$.

As for definability, assume that $B,G, G'$, and $F,F'$ are $\bigvee$-definable, and
take definable sets $X\sub G$ and $X'\sub G'$. We want to show that the intersection
of $graph(h)$ with $X'\times X$ is definable. Since  $F'$, $F$ are
$\bigvee$-definable and surjective, there exists a definable $Y\sub B$ such that
$F'(Y)\supset X'$ and $F(Y)\supseteq X$. Now, for every $g'\in X'$ there exists
$b\in Y$ such that $F'(b)=g'$, and we have $h(g')=F(b)$. Thus, the intersection of
$graph(h)$ with $X'\times X$ is definable. \qed

\begin{rmk}
 All statements from Proposition \ref{pushout1} to Lemma \ref{ext4} hold under the more general assumption that $\cal{M}$ is any sufficiently saturated structure
  (not necessarily o-minimal) which has strong definable choice. This is because the definability issues
   in the statements are all based on Fact \ref{edmundo}, which can be proved for such a more general $\cal{M}$.
\end{rmk}

\section{Preliminaries II: Semi-bounded sets}\label{sec-sbd}

\subsection{Long cones and long dimension}

In this section we recall some notions from \cite{el-sbd} and prove basic facts that
follow from that paper.

A \emph{$k$-long cone in $M^n$} is a set of the form
$$C=\left\{ b+\sum_{i=1}^k \lambda_i(t_i) :b\in B ,\, t_i \in J_i \right\},$$
where $B$ is a short cell, each $J_i=(0,a_i)$ is a long interval (with $a_i$
possibly $\infty$) and $\lambda_1,\ldots,\lambda_k$ are $M$-independent partial
linear maps from $(-a_i,a_i)$ into $M^n$ (by $M$-independent we mean: for all
$t_1,\dots, t_k \in M$, if $\lambda_1(t_1)+\dots+\lambda_k(t_k)=0$ then $t_1=\cdots
=t_k=0$). It is required further that for each $x\in C$ there are unique $b$ and
$t_i$'s with $x=b+\sum_{i=1}^k \lambda_i (t_i)$ (we refer to this as ``long cones
are normalized"). So $\dim C=\dim B+k$. A \emph{long cone} is a $k$-long cone for
some $k$. By the normality condition, if $C$ is a $k$-long cone of dimension $k$
then $B$ must be a singleton.

The \emph{long dimension} of a definable set $X\sub M^n$, denoted $\lgdim(X)$, is the maximum $k$ such that $X$
 contains a $k$-long cone. This notion coincides with what we defined as long dimension in the Introduction.
  We call $X$ {\em strongly long} if $\lgdim(X)=\dim(X)$.

Note that if $C$ as above is a bounded cone (namely, all $a_i$'s belong to $M$) then
we can take $B'=\{b+(\lambda_1(a_1/2),\ldots, \lambda_k(a_k/2)):b\in B\}$ and  write
$C=B'+\la C\ra $ where
$$\la C\ra=\left \{\sum_{i=1}^k \lambda_i(t_i):t_i\in (-a_i/2,a_i/2) \right
\}.$$ In this paper, we are interested in bounded cones so we replace $B$ with
$B'$ and write $C=B+\la C\ra$.
\\

As is shown in \cite[Section 5]{el-sbd} the notion of short and long intervals gives
rise to a pregeometry based on the following closure operation:

\begin{defn} Let $\CM$ be an o-minimal expansion of an ordered group. Given $A\sub M $ and $a\in M$,
we say that $a$ is in the short closure
of $A$, $a\in scl(A)$, if there exists an $A$-definable short interval containing
$a$ (in particular, $dcl(A)\sub scl(A)$).

 We say that $B\sub M$ is
$scl$-independent over $A$ if for every $b\in B$, we have $b\notin
scl(B\!\cup\!A\setminus \{b\})$. We let $\lgdim(B/A)$ be the cardinality of a maximal
$scl$-independent subset of $B$ over $A$.
\end{defn}
Notice that if $\CM$ expands a real closed field then every set has long dimension
$0$ over $\emptyset$. On the other hand if $\CM$ is a reduct of an ordered vector
space then $scl(-)=dcl(-)$. Thus, this notion is interesting when $\CM$ is
non-linear and yet does not expand a real closed field (namely, non-linear and
semi-bounded).

  As for the usual o-minimal dimension, the notion of long dimension
for definable sets is compatible with the $scl$-pregeometry in the following sense
(see \cite[Corollary 5.10]{el-sbd}):

\begin{fact} If $X$ is an $A$-definable set in a sufficiently saturated o-minimal
expansion of an ordered group then
$$\lgdim(X)=\max\{\lgdim(x/A):x\in X\}.$$
\end{fact}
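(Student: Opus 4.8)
The plan is to establish the two inequalities $\lgdim(X)\le\max_{x\in X}\lgdim(x/A)$ and $\max_{x\in X}\lgdim(x/A)\le\lgdim(X)$ separately, in both cases reducing to the behaviour of $scl$ on a single normalized long cone. So the first thing I would do is fix an $A$-definable normalized $k$-long cone $C=\{b+\sum_{j=1}^{k}\lambda_j(t_j):b\in B,\ t_j\in J_j\}$ — with $B$ a short $A$-definable cell, each $J_j=(0,a_j)$ a long $A$-definable interval, and the $\lambda_j$ being $A$-definable and $M$-independent — and record two facts. \emph{Fact (i): every $x\in C$ satisfies $\lgdim(x/A)\le k$.} Indeed, by normality of $C$ and $A$-definability of $C$ the tuple $(b,t_1,\dots,t_k)$ is an $A$-definable function of $x$ and conversely, so $scl(A\cup\{x\})=scl(A\cup\{b,t_1,\dots,t_k\})$; since $B$ is short we have $\lgdim(b/A)=0$ (this is the $\lgdim=0$ instance of the statement, which \cite{el-sbd} extracts directly from the cone decomposition, a $0$-long cone being a short $A$-definable cell), and then the addition formula of the $scl$-pregeometry gives $\lgdim(x/A)=\lgdim(b,t_1,\dots,t_k/A)=\lgdim(t_1,\dots,t_k/A\cup\{b\})\le k$. \emph{Fact (ii): some $x\in C$ satisfies $\lgdim(x/A)\ge k$.} Here I would use that a long $A$-definable interval has $scl$-dimension $1$ over every bounded parameter set containing $A$ (\cite{el-sbd}), so one can choose recursively $t_j\in J_j$ with $t_j\notin scl(A\cup\{t_1,\dots,t_{j-1}\})$; then $\{t_1,\dots,t_k\}$ is $scl$-independent over $A$, and for any $b\in B$ the point $x=b+\sum_j\lambda_j(t_j)\in C$ has each $t_j\in scl(A\cup\{x\})$ (as in Fact (i)), whence $\lgdim(x/A)\ge k$.

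For the inequality $\max_{x\in X}\lgdim(x/A)\le\lgdim(X)$, I would invoke the decomposition theorem of \cite{el-sbd} to write $X=C_1\cup\dots\cup C_m$ with each $C_i$ an $A$-definable $k_i$-long cone (passing, if necessary, to a finite extension of $A$, which does not change $scl(A)$). Given any $x\in X$, pick $i$ with $x\in C_i$; then $\lgdim(x/A)\le k_i$ by Fact (i), and since $C_i\subseteq X$ is a $k_i$-long cone we have $k_i\le\lgdim(X)$. Hence $\lgdim(x/A)\le\lgdim(X)$ for every $x\in X$.

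For the reverse inequality $\lgdim(X)\le\max_{x\in X}\lgdim(x/A)$, I would take a $k$-long cone $C\subseteq X$ realizing $k=\lgdim(X)$, defined over some bounded $A'\supseteq A$, and apply Fact (ii) over $A'$ to get $x\in C\subseteq X$ with $\lgdim(x/A')\ge k$; since enlarging the base parameter set cannot increase long dimension, $\lgdim(x/A)\ge\lgdim(x/A')\ge k=\lgdim(X)$. Combining the two inequalities shows the maximum is attained and equals $\lgdim(X)$.

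The only non-formal ingredients are the decomposition of an $A$-definable set into $A$-definable long cones and the basic properties of the $scl$-pregeometry (exchange, the addition formula, and $\lgdim=1$ for long intervals), all from \cite{el-sbd}; the rest is formal manipulation together with the uniqueness clause built into the notion of a normalized long cone. So the step I expect to be the real obstacle — were one to aim for a self-contained proof — is the cone-decomposition theorem itself; granting it and the $scl$-pregeometry, the argument above is essentially bookkeeping.
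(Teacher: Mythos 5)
The paper does not prove this statement at all: it is quoted verbatim from \cite[Corollary 5.10]{el-sbd}, so there is no in-paper argument to compare yours against. Your reconstruction is sound and uses exactly the two ingredients the source paper is built on (the decomposition of an $A$-definable set into $A$-definable long cones, and the fact that $scl$ is a pregeometry), and both inequalities are handled correctly: Fact (i) via interdefinability of $x$ with $(b,t_1,\dots,t_k)$ (this is where normalization is genuinely needed) plus additivity, and Fact (ii) via a recursively chosen $scl$-independent tuple in the long intervals, which requires saturation to guarantee that a long interval is not exhausted by the boundedly many short intervals definable over a bounded parameter set.

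One parenthetical should be repaired, though it does not sink the argument. In the direction $\max_{x}\lgdim(x/A)\le\lgdim(X)$ you allow ``passing, if necessary, to a finite extension of $A$, which does not change $scl(A)$.'' Adjoining finitely many \emph{arbitrary} new elements certainly can change $scl(A)$ (adjoin a tall element $c$: then $c\in scl(Ac)\setminus scl(A)$), and if the cone decomposition really required such parameters this inequality would break, since Fact (i) over the larger set only bounds $\lgdim(x/A')$, which is $\le\lgdim(x/A)$ --- the wrong direction. The fix is simply that no such extension is needed: the decomposition in \cite{el-sbd} (as this paper itself records) produces cones definable over the same parameters as $X$, and any auxiliary parameters arising from definable choice lie in $dcl(A)\subseteq scl(A)$. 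With the hedge either deleted or justified that way, the proof is correct.
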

We say that $a\in X$ is {\em long-generic over $A$} if $\lgdim(a/A)=\lgdim(X)$.
\\

By \cite[Theorem 3.8]{el-sbd}, if $X$ is $A$-definable of long dimension $k$ and $a$
is long generic in $X$ over $A$ then $a$ belongs to an $A$-definable $k$-long cone
in $X$.
\\

We are now ready to prove two facts which will be used later on.

\begin{fact}\label{Fcont}
Let $F:B\times C\to M^l$ be a definable map, where  $B\sub M^m$ is a short set and
$C\sub M^n$ is strongly long (namely $\lgdim(C)=dim(C)$).
 Then there is an open subset $B_1$ of $B$ and a strongly long
$X\sub C$, with $\dim X=\dim C$, such that $F$ is continuous on $B_1\times X$.
\end{fact}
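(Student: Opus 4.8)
The plan is to exploit the fact that, generically, o-minimal definable functions are continuous, and then to argue that the ``bad'' set where $F$ fails to be continuous is negligible \emph{both} in the short directions and, crucially, in the long directions. First I would invoke the o-minimal cell decomposition (in $\CM$, over the same parameters as $F$) to partition $B\times C$ into finitely many cells on each of which $F$ is continuous. Let $D$ be the union of those cells on which $F$ is continuous and that have full dimension $\dim(B)+\dim(C)$ in $B\times C$; then $(B\times C)\setminus D$ has dimension strictly less than $\dim(B)+\dim(C)$, so by the fact relating dimension and genericity (the $\dim$-analogue, not just the $\lgdim$-one), for a generic point the fibre behaves well. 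Concretely, I want to find a point $c\in C$ that is long-generic over the parameters (so $\lgdim(c/A)=\lgdim(C)=\dim(C)$) and such that the fibre $D_c=\{b\in B:(b,c)\in D\}$ is dense in $B$, indeed contains a nonempty open subset of $B$.

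The key point is the interaction between $\lgdim$ and $\dim$. Since $C$ is strongly long, a long-generic point of $C$ over $A$ is also dcl-generic over $A$ in the usual sense — here I would use that $scl(A)\supseteq dcl(A)$ together with strong longness to see that $\lgdim(c/A)=\dim(C)$ forces $\dim(c/A)=\dim(C)$ as well, so $c$ is generic in $C$ in the ordinary sense. For such a $c$, the fibre $(B\times C\setminus D)_c$ has dimension $<\dim(B)$ (otherwise $B\times C\setminus D$ would have dimension $\ge\dim(C)+\dim(B)$, contradiction, via the addition formula for fibre dimensions). Hence $D_c$ contains a nonempty open $B_1\subseteq B$. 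Now fix this $B_1$ and set $X=\{c'\in C: B_1\times\{c'\}\subseteq D\}$. This $X$ is $A$-definable (together with the finitely many parameters naming the cells, still a finite set), it contains the long-generic point $c$, hence $\lgdim(X)=\lgdim(C)=\dim(C)$; and since $X\subseteq C$ we get $\dim(X)=\dim(C)$ and $X$ is strongly long. By construction $F$ is continuous on $B_1\times X$: for each $c'\in X$ the slab $B_1\times\{c'\}$ lies in a single continuity cell — more carefully, one should shrink to ensure joint continuity, which is handled by the fact that on each cell $F$ is continuous as a function of both variables.

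One technical nuisance to watch: shrinking $B$ to an \emph{open} subset $B_1$ and getting \emph{joint} continuity on $B_1\times X$ (not merely continuity in each variable separately). This is the step I expect to be the main obstacle, and the clean way to handle it is to do the cell decomposition of $B\times C$ in the product and keep only top-dimensional cells on which $F$ is continuous; a top-dimensional cell of $B\times C$ whose projection to $B$ is open will, after intersecting with $B_1\times X$ as above, give a set on which $F$ is genuinely continuous in the product topology, and one verifies that finitely many such cells cover $B_1\times X$ for suitable $B_1, X$. A secondary point is making sure the parameter set over which $X$ is defined is the \emph{same} finite set, so that ``$c$ long-generic'' really forces $\lgdim(X)=\dim(C)$; this is automatic since the cell decomposition is carried out over $\mathrm{acl}$ of the original parameters, which does not change $scl$-independence. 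Modulo these routine o-minimality manipulations, the argument reduces to the single genuinely semi-bounded input: \emph{a long-generic point of a strongly long set is $\dim$-generic}, which is exactly what ``strongly long'' buys us.
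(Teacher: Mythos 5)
Your overall strategy is the one the paper uses: the single semi-bounded input is that a long-generic point of a strongly long set is also $\dim$-generic (since $scl\supseteq dcl$ gives $\lgdim(c/A)\le\dim(c/A)$), after which everything reduces to standard o-minimal generic continuity. The fibre-dimension computation locating an open $B_1\sub B$ over a long-generic $c$ is fine, as is the observation that a point of a full-dimensional continuity cell has a relatively open neighbourhood in $B\times C$ on which $F$ is continuous (equivalently, one can just take $D$ to be the definable set of continuity points of $F\res (B\times C)$, whose complement has smaller dimension).

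There is, however, one step you dismiss for the wrong reason, and it is exactly the step the paper spends care on. Your set $X=\{c'\in C: B_1\times\{c'\}\sub D\}$ is definable not just over the parameters of the cell decomposition but also over the parameters of $B_1$ --- and $B_1$ was produced \emph{after} choosing $c$, as an open subset of the fibre $D_c$, so a priori its defining parameters are not $scl$-independent from $c$. If they are not, then $c\in X$ gives you nothing: $X$ could perfectly well be a short neighbourhood of $c$, and the conclusion $\lgdim(X)=\lgdim(C)$ fails. Saying this is ``automatic since the cell decomposition is carried out over $\mathrm{acl}$ of the original parameters'' addresses the cells but not $B_1$. The fix is the paper's: because $B$ is short, one can take $B_1$ to be the trace on $B$ of a short rectangular box $V_0$ whose corner parameters are chosen $scl$-independent from $\la b,c\ra$; then $\lgdim(c/Ab)=\lgdim(c/b)=\lgdim(C)$ still holds, $c$ remains long-generic over the parameters defining $X$, and only then does $c\in X$ force $X$ to be strongly long of full long dimension. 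With that repair your argument goes through and is essentially the paper's proof.
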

\begin{proof} We may assume that $B,C$ and $F$ are $\emptyset$-definable.
Pick $b$ generic in $B$  and $c$ which is long-generic in $C$ over $b$. Since $B$ is
short we have
$$\lgdim(bc/\emptyset)=\lgdim(c/b)=\lgdim(C)=\lgdim(B\times C).$$ Because
$\dim C=\lgdim C$, $c$ is also generic over $b$ and, hence, we have
$$\dim(bc/\emptyset)=\dim B\times C.$$ That is, $\la
b,c\ra$ is generic in $B\times C$ so there exists a $\emptyset$-definable
relatively open set $Y\sub B\times C$ containing $\la b,c\ra$, on which $F$ is
continuous. In particular, there exists a relatively open neighborhood $B_1\sub B$,
$b\in B_1$, such that $B_1\times \{c\}\sub Y$. We may assume that $B_1$ is given as
the intersection of a short rectangular neighborhood $V_0$ and $B$. By shrinking
$V_0$ if needed, we may assume that the set of parameters $A$ defining $V_0$ is
$scl$-independent from $\la b,c\ra$ (and contains short elements). Hence
$\lgdim(c/Ab)=\lgdim(c/b)$ so $c$ is still long-generic in $C$ over $Ab$. By
genericity, we can find an $Ab$-definable set $X\sub C$ such that $B_1\times X\sub
Y$. Because $c\in X$, the set $X$ must be strongly long of the same (long) dimension
as $C$.
\end{proof}

\begin{fact}\label{shortimage}
Let $h:X\to W$ be a definable map, where $\lgdim X=\dim X>0$ and $W\sub M^m$ is
short. Then there exists a definable set $Y\sub X$, with $\lgdim Y< \lgdim X$ such
that $h$ is
 locally constant on $X\setminus Y$.
\end{fact}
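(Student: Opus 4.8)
\textbf{Proof plan for Fact \ref{shortimage}.}

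The plan is to argue by induction on $\lgdim X = \dim X = k$, and to reduce to the case where $X$ itself is a $k$-long cone of dimension $k$, so that by the normality condition $X = \{b + \sum_{i=1}^k \lambda_i(t_i) : t_i \in J_i\}$ with $b$ a fixed point and the $\lambda_i$ being $M$-independent partial linear maps on long intervals $J_i = (0,a_i)$. Indeed, by \cite[Theorem 3.8]{el-sbd} every long-generic point of $X$ lies in an $A$-definable $k$-long cone inside $X$, and it suffices to prove the statement for each such cone: if $h$ is locally constant off a set of smaller long dimension on each piece of a decomposition of $X$ into finitely many long cones plus a remainder of long dimension $<k$ (such a decomposition exists by the main results of \cite{el-sbd}), then gluing these exceptional sets and the remainder still gives a set of long dimension $<k$. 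When $\dim X = k = \lgdim X$, the short base $B$ of each $k$-long cone is a singleton, so we really are reduced to the cone $X=\{b+\sum \lambda_i(t_i)\}$, which is in definable bijection (via $(t_1,\dots,t_k)\mapsto b+\sum\lambda_i(t_i)$) with the box $\prod_i J_i$.

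Next I would pull $h$ back along this bijection to get a definable map $g:\prod_i J_i \to W$ with $W$ short, and I want to show $g$ is locally constant off a subset of $\prod_i J_i$ of long dimension $<k$. The key point is that a definable function from a long interval into a short set must be ``generically constant'' in a strong sense: if $a$ is tall and $f:(0,a)\to W$ is definable with $W$ short, then by o-minimality $f$ is piecewise monotone or constant, but a monotone injective definable map from a long interval would put a long interval in definable bijection with a subset of the short set $W$, contradicting that short sets have long dimension $0$ (and the image of a short set under a definable map is short). Hence $f$ is constant on all but finitely many subintervals, i.e.\ locally constant off a finite (in particular short, long-dimension-$0$) set. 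I would then run this coordinate by coordinate: fixing a long-generic tuple, apply the one-variable observation in each coordinate direction to conclude that near a long-generic point $g$ does not depend on any coordinate, hence is locally constant there; the set of non-long-generic points of $\prod_i J_i$, together with the finitely many ``bad'' slices produced in each coordinate, has long dimension $<k$ by the compatibility of $\lgdim$ with the $scl$-pregeometry (the Fact preceding \ref{Fcont}).

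More carefully, the cleanest way to organize the last step is: let $Y_0\subseteq X$ be the set of points that are not long-generic over $A$; by the $scl$-pregeometry Fact, $\lgdim Y_0 < k$. For a point $x\notin Y_0$, write $x$ in cone coordinates; for each $i$, the restriction of $g$ to the $i$-th coordinate line through $x$ (with the other coordinates fixed at their long-generic values) is a definable map from a long interval to the short set $W$, hence locally constant near $t_i$ by the one-variable argument, and since $x$ is long-generic it is in fact constant on a neighborhood of $t_i$ in that line. Doing this for all $i$ and using that ``locally constant in each variable separately on an open box'' implies ``locally constant'' gives that $g$, hence $h$, is locally constant on a neighborhood of $x$. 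Thus $h$ is locally constant on $X\setminus Y$ where $Y$ is contained in $Y_0$ together with the union of the lower-dimensional exceptional sets, and $\lgdim Y < \lgdim X$.

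I expect the main obstacle to be the bookkeeping at the passage from ``locally constant along each coordinate line at long-generic points'' to a single definable set $Y$ of long dimension $<k$ off which $h$ is genuinely locally constant: one must be a little careful that the exceptional subintervals produced in the one-variable argument can be chosen definably and uniformly in the fixed coordinates, and that after fibering over short parameters (coming from the cell decomposition and from shrinking neighborhoods, as in the proof of Fact \ref{Fcont}) the accumulated exceptional set still has long dimension strictly less than $k$. This is exactly the kind of control provided by the long-cone decomposition of \cite{el-sbd} and by the $scl$-pregeometry, so the argument should go through, but it is the step that requires the most care rather than a one-line invocation.
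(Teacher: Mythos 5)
Your plan is workable in outline but it is a much longer road than necessary, and it has two concrete soft spots. First, the one-variable statement is not quite what you wrote: by monotonicity a definable $f:(0,a)\to W$ decomposes into finitely many subintervals on each of which it is constant or strictly monotone, and on a strictly monotone piece $f$ is injective into the short set $W$, which forces that piece to be a \emph{short interval}, not a point. So the exceptional set is a finite union of short intervals and points --- of long dimension $0$, but not finite --- and the bookkeeping when you sweep these slices across the remaining coordinates has to go through the $scl$-pregeometry rather than through counting. Second, and more seriously, ``locally constant in each variable separately at a point'' does not by itself yield ``locally constant at that point'': you need the neighbourhoods of constancy along the $i$-th coordinate lines to admit a uniform lower bound as the other coordinates vary, and you flag this yourself without closing it. It can be closed (definable choice gives a definable function assigning to each point the size of its box of separate constancy, and genericity bounds it away from $0$ near a long-generic point), but as written this step is a genuine gap, not mere bookkeeping.

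The paper avoids all of this with a single genericity observation that your plan is missing. Take $x$ long-generic in $X$ over the defining parameters and set $w=h(x)$. Since $W$ is short, $\lgdim(w/\emptyset)=0$, so $x$ remains long-generic in $X$ over $w$; and since $X$ is strongly long ($\lgdim X=\dim X$), long-generic over $w$ implies generic over $w$ in the ordinary sense of $\dim$. The fiber $h^{-1}(w)$ is a $w$-definable subset of $X$ containing the point $x$, which is generic over $w$, hence it contains a relative neighbourhood of $x$; that is, $h$ is locally constant at $x$. The definable set of points where $h$ fails to be locally constant therefore contains no long-generic point, so it has long dimension strictly less than $\lgdim X$. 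No cone decomposition, no coordinate-by-coordinate analysis, and no gluing are needed; your one-variable monotonicity argument is essentially the $k=1$ instance of this observation, and I would rebuild the proof around it.
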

\begin{proof}
Without loss of generality, $X$, $W$ and $h$ are $\emptyset$-definable. Take $x$
long-generic in $X$ and let $w=h(x)$. Because $w\in W$, we have
$\lgdim(w/\emptyset)=0$ and therefore $x$ is still long-generic in $X$ over $w$. It
follows that there is a $w$-definable set $X_0\sub X$, such that for every $x'\in
X_0$, $h(x')=w.$ The set $X$ is strongly long, so $x$ is also generic in $X$ over
$w$. Hence, the set $X_0$ contains a relative neighborhood of $x$ in $X$, so $h$ is
locally constant at $x$. This is true for every long-generic element in $X$ so the
set of points at which $h$ is not locally constant must have smaller long dimension
than that of $X$.
\end{proof}

\subsection{A preliminary result about definably compact groups}\label{localH}

 We assume that $\la G,+\ra $ is a definable
abelian group. Recall that $X\sub G$ is {\em generic} if finitely many group
translates of $X$ cover $G$. Using terminology from \cite{Ot-Pet}, a definable
 set $X\sub G$ is called {\em $G$-linear} if for every $g,h\in X$ there is an
open neighborhood $U$ of $0$ (here and below, we always refer to the group topology
of $G$), such that $(g-X)\cap U=(h-X)\cap U$. Clearly, every open subset of a
definable subgroup of $G$ is a $G$-linear set. More generally, every group translate
of such a set is also $G$-linear. As is shown in \cite{Ot-Pet}, if a $G$-linear
subset contains $0$ then it contains an infinitesimal subgroup of $G$. When the
group $G$ is $\la M^n, +\ra$  a $G$-linear subset is also called {\em affine}. We
call a definable $G$-linear subset $X\sub G$ {\em a local subgroup of $G$} if it is
definably connected and $0\in X$.

The  $G$-linear set $G_0\sub G$ and the $H$-linear set $H_0\sub H$ are {\em
definably isomorphic} if there exists a definable bijection $\phi:G_0\to H_0$ such
that for every $g,h,k\in G_0$, $g-h+k\in G_0$ if and only if
$\phi(g)-\phi(h)+\phi(k)\in H_0$, in which case we have
$\phi(g-h+k)=\phi(g)-\phi(h)+\phi(k)$. \emph{An isomorphism of local subgroups}
$G_0\sub G$ and $H_0\sub H$, is  further required to send $0_G$ to $0_H$. If
$\phi:G_0\to H_0$ is an isomorphism of local subgroups  then for all $g,k\in G_0$, if
$g+k\in G_0$ then $\phi(g)+\phi(h)\in H_0$ and we have $\phi(g+h)=\phi(g)+\phi(h)$.


Our starting point is  Proposition 5.4 from \cite{el-sbd}, which comes out of the
analysis of definable sets in semi-bounded structures. Recall our notation $C=B+\la
C\ra$ from Section \ref{sec-sbd}. Below we use $\oplus$ and $\ominus$ for group
addition and subtraction in $G$ and use $+$ and $-$ for the group operations in
$\CM$.

\begin{fact}\cite[Proposition 5.4]{el-sbd} \label{pantelis} Let $\la G,\oplus \ra $ be a definably compact
abelian
 group of long dimension $k$. Then $G$ contains
a definable, generic, bounded $k$-long cone $C$ on which the group topology of $G$
agrees with the o-minimal topology. Furthermore, for every $a\in C$ there exists an
open neighborhood $V\sub G$ of $a$ such that for all $x,y\in V\cap a+\la C\ra$,
\begin{equation} \label{cone1} x\ominus a\oplus y=x-a+y.\end{equation}
\end{fact}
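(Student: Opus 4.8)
The plan is to produce the cone $C$ from the long-cone decomposition of \cite{el-sbd} and then to deduce both assertions from the way $C$ sits inside $G$, using two inputs: the group topology $\tau$ coincides with the ambient o-minimal topology at generic points of $G$, and a definable map becomes affine in the long directions once one passes to a suitable sub-cone (a consequence of the cone decomposition).

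First I would extract $C$. Write $n=\dim G$, and recall $\lgdim G=k$ and that $G$, being definably compact in a non-linear semi-bounded structure, is bounded. By the decomposition theorem of \cite{el-sbd}, $G$ is a finite union of long cones, and, using that $G$ is homogeneous (a group) and $\lgdim G=k$, one of these pieces is a $k$-long cone $C_0=B_0+\la C_0\ra$ of dimension $n$, with $B_0$ a bounded short cell of dimension $n-k$. Since $\dim C_0=\dim G$ and $G$ is definably compact, $C_0$ is generic in $G$. Now the points of $C_0$ at which $\tau$ differs from the o-minimal topology form a set of dimension $<n$; likewise, applying the cone decomposition to the graph of the definable translation map $(a,x,y)\mapsto x\ominus a\oplus y$ on $\{(a,x,y):a\in C_0,\ x,y\in a+\la C_0\ra\}$ exhibits a definable subset of $C_0$, with complement of dimension $<n$, outside of which this map is affine in the long directions with the base point interior to the relevant long intervals. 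Restricting $B_0$ to an open subcell and each defining interval of $\la C_0\ra$ to an open long subinterval, chosen through a point avoiding both exceptional sets, yields a bounded $k$-long cone $C=B+\la C\ra\sub C_0$ of dimension $n$, still generic in $G$, on which $\tau$ agrees with the o-minimal topology. This proves the first assertion.

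Next I would prove (\ref{cone1}). Fix $a\in C$ and set $\Phi(x,y)=x\ominus a\oplus y$, a definable map defined near $(a,a)$ on $(a+\la C\ra)^2$, with $\Phi(a,a)=a$. By the choice of $C$, $\Phi$ is $\tau$-continuous near $(a,a)$, and clearly $\Phi(a,y)=y$ and $\Phi(x,a)=x$. Parametrizing $a+\la C\ra$ by $s\mapsto a+\sum_i\lambda_i(s_i)$ turns $\Phi$ into a definable map of $(s,t)$ near $0\in M^k\times M^k$ whose arguments range over long intervals in the $M$-independent directions $\lambda_i$; by the choice of $C$ this map is affine on a two-sided long neighborhood of $(0,0)$, of the form $\Phi\Big(a+\sum_i\lambda_i(s_i),\,a+\sum_i\lambda_i(t_i)\Big)=a+L(s,t)$ for a linear $L\colon M^k\times M^k\to M^n$ (the graph of $\Phi$ has long dimension $2k$ and dimension $2k$, so the short cell of its top cone is a point, i.e. the cone is an affine graph). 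Then $\Phi(x,a)=x$ gives $L(s,0)=\sum_i\lambda_i(s_i)$, $\Phi(a,y)=y$ gives $L(0,t)=\sum_i\lambda_i(t_i)$, and linearity gives $L(s,t)=\sum_i\lambda_i(s_i)+\sum_i\lambda_i(t_i)$; that is, $\Phi(x,y)=x-a+y$ on this sub-cone. Taking for $V$ any $\tau$-open neighborhood of $a$ contained in it completes the proof.

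The step I expect to be the main obstacle is arranging, already in the first part, that the second part works uniformly over all $a\in C$ and on a genuine two-sided long neighborhood of $a$: one must guarantee that $x$, $y$ and $x\ominus a\oplus y$ simultaneously stay in the region where the three-variable translation map is affine, that the base point is interior to the defining long intervals, and that the short cell $B$ contributes nothing in the long directions. This is where one feeds the cone decomposition of the translation map back into the choice of $C$ and uses long-genericity (so that generic affine behavior is the actual behavior on a full cone-neighborhood) together with the normalization of long cones from Section \ref{sec-sbd}; the extraction of a full-dimensional $k$-long cone inside $G$ in the first part similarly rests on the homogeneity of $G$ and the decomposition theorem of \cite{el-sbd}.
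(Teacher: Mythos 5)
First, a point of order: the paper does not prove Fact \ref{pantelis} at all --- it is imported verbatim as Proposition 5.4 of \cite{el-sbd} and used as a black box --- so there is no in-paper argument to compare yours against, and a correct proof here amounts to reproving the main group-theoretic result of that reference. Your choice of tools is the right one (decomposition of definable sets into long cones; the observation that a strongly long graph of dimension equal to its long dimension decomposes into cones whose top-dimensional pieces have singleton base and are therefore affine). But there are two places where the sketch asserts exactly what the cited proposition has to work for. (i) \emph{Genericity forces full dimension}: in a definably compact group a generic set must have dimension $n=\dim G$, so you need a $k$-long cone whose base cell has dimension $n-k$. The cone decomposition of $G$ only guarantees that \emph{some} piece is $n$-dimensional and \emph{some} piece is a $k$-long cone; nothing rules out that every full-dimensional piece has long dimension $<k$ while the long dimension is carried by lower-dimensional pieces. ``$G$ is homogeneous'' is not an argument here --- translation preserves neither the cone structure nor, a priori, long-genericity --- and closing this gap requires the $scl$-pregeometry (producing a point that is simultaneously generic and long-generic over the parameters) or the actual group-theoretic argument of \cite{el-sbd}.

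(ii) \emph{Locating the affine piece}: the decomposition of the graph of $\Phi(x,y)=x\ominus a\oplus y$ over $(a+\la C\ra)^2$ yields an affine $2k$-cone \emph{somewhere} in that $2k$-dimensional domain; it need not contain $(a,a)$ in its interior, nor the slices $\{t=0\}$ and $\{s=0\}$ on which you evaluate $\Phi(x,a)=x$ and $\Phi(a,y)=y$ to pin down the linear part $L$. Without those slices (or an argument that the finitely many affine pieces glue by continuity to a single linear part near the diagonal), the identification $L(s,t)=\sum_i\lambda_i(s_i)+\sum_i\lambda_i(t_i)$ is unjustified, and an affine $\Phi$ with a different $L$ would not give (\ref{cone1}). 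Moreover this must hold for \emph{every} $a\in C$ after only finitely many shrinkings of $C$; the exceptional set of the three-variable map lives in an $(n+2k)$-dimensional parameter space and ``choosing a sub-cone through a point avoiding it'' does not by itself give the required uniformity over all of $C$. You correctly flag this as the main obstacle, but it is precisely the content of the proposition rather than a refinement of your argument, so the proof as written is not complete.
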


 Our goal is to prove:
\begin{prop}\label{groupH} Let $\la G,\oplus\ra $ be a definably compact, definably connected abelian group. Then there exists
a definably connected, $k$-dimensional local subgroup $H\sub G$ and a definable
short set $B\sub G$, $\dim(B)=\dim(G)-k$, satisfying:
\begin{enumerate}
\item $\la H,\oplus\ra$ is definably isomorphic, as a local group, to $\la H',+\ra$,
where $H'=(-e_1, e_1)\times \dots\times (-e_k, e_k)\sub M^k$, with each $e_i>0$ tall
in $M$. In particular, $\dim H=lgdim H=k$.

\item The set $B\oplus H=\{b\oplus h:b\in B\,\, h\in H\}$ is generic in $G$.

\end{enumerate}
\end{prop}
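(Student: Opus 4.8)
The plan is to start from the generic bounded $k$-long cone $C = B + \la C\ra$ provided by Fact \ref{pantelis}, and to extract from $\la C\ra$ a genuine local subgroup of $\la G,\oplus\ra$. First I would fix a point $a\in C$ and the neighborhood $V\sub G$ of $a$ from Fact \ref{pantelis}, so that on $V\cap (a+\la C\ra)$ the group operation $\oplus$ agrees with the ambient $+$ via \eqref{cone1}. The natural candidate for $H$ is a shrinking of $\la C\ra$ of the form $H = \{\sum_{i=1}^k \lambda_i(t_i) : t_i\in(-\eps_i,\eps_i)\}$ for suitably small tall $\eps_i$; since the $\lambda_i$ are $M$-independent partial linear maps, $H$ is $k$-dimensional, strongly long, and — because the $\lambda_i$ are linear — $H$ is an affine (i.e. $\la M^n,+\ra$-linear) set, hence, once we verify that $\oplus$ restricted near it coincides with $+$, it will be a $G$-linear set. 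The first real task is therefore to choose the $\eps_i$ small enough that translation arguments using \eqref{cone1} let us conclude $x\ominus 0\oplus y = x+y$ for $x,y,x\oplus y$ ranging in $H$; concretely, one translates by $a$, applies \eqref{cone1} inside $V$, and controls the finitely many instances needed for the local-group axioms. This gives a definably connected $k$-dimensional local subgroup $H\sub G$ with $\la H,\oplus\ra \cong \la H',+\ra$ for a box $H'\sub M^k$ with tall sides, establishing (1).

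For (2), I would keep the short cell $B$ from the cone decomposition $C = B+\la C\ra$ (possibly shrinking it, and using the translated version $B' $ from the remark in Section \ref{sec-sbd} so that $C = B'+\la C\ra$ with $\la C\ra$ symmetric). Since $C$ is generic in $G$ and $C\sub B\oplus H'' $ for the appropriate full-size symmetric piece $H''$ of $\la C\ra$, the set $B\oplus H''$ is a fortiori generic; then I would replace $H''$ by the smaller $H$ from (1) at the cost of enlarging $B$ by finitely many $\oplus$-translates of a fixed short set — covering $H''$ by boundedly many $\oplus$-translates $h_j\oplus H$ is possible because $H''$ is a box with tall sides and $H$ has tall sides, so finitely many translates of the smaller box cover the larger one — and absorbing the $h_j$'s into $B$ (the union $\bigcup_j (b_j\oplus B)$-type set is still short, as a finite union of short sets, and we relabel it $B$). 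One must also arrange $\dim(B) = \dim(G)-k$: this follows because $\dim C = \dim B + k$ and $\dim C = \dim G$ (genericity), so after the modifications $\dim B$ stays $\dim G - k$; if an enlargement of $B$ accidentally raised its long dimension we note the new pieces are still short (translates of short sets by elements of short sets, using that $\oplus$ on short sets is short), so $\lgdim B = 0$ throughout, keeping $\dim B = \dim G - k$ consistent.

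The main obstacle I anticipate is the bookkeeping in passing from the ``agrees with $+$ on a single neighborhood $V$ of a single point $a$'' statement of Fact \ref{pantelis} to a clean local-subgroup statement centered at $0$: one needs $x\ominus a\oplus y = x-a+y$ to hold not just for one $a$ but uniformly enough that, after translating the box to be centered at the origin, the local-group axioms (closure of $\oplus$ and $\ominus$ within $H$ whenever the ambient sum lands in $H$, and compatibility of $\phi$ with these operations) all go through. The way I would handle this is to shrink the box by a definite factor (say replace each $(-a_i/2,a_i/2)$ by $(-a_i/4,a_i/4)$ or finer) so that sums and differences of two elements of $H$ still lie within the $+$-translate of the region where \eqref{cone1} applies, and to use the $M$-independence of the $\lambda_i$ to identify $H$ isometrically with the box $H'\sub M^k$; the linearity of the $\lambda_i$ is exactly what makes $\phi$ an affine map and forces $\phi(g-h+k) = \phi(g)-\phi(h)+\phi(k)$. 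The genericity transfer in (2) is comparatively routine once one knows that finitely many translates of a tall-sided box cover any larger tall-sided box, which is immediate from the ordered-group structure.
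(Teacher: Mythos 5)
There is a genuine gap at the heart of your argument for item (2): the containment $C\sub B\oplus H''$ that you assert is unjustified, and establishing something like it is in fact the main content of the proof. The cone decomposition $C=B+\la C\ra$ uses the ambient addition $+$ of $\la M^n,+\ra$, whereas $B\oplus H''$ uses the group operation of $G$. Equation \eqref{cone1} only says that $\oplus$ agrees with $+$ \emph{locally along a single fiber} $a+\la C\ra$; it says nothing about how the fibers $C_b=b+\la C\ra$ for varying $b\in B$ sit relative to one another in $\la G,\oplus\ra$. Each $C_b$ is a $G$-linear set and the affine map $f_{a,b}(x)=x-a+b$ is an isomorphism of $G$-linear sets $C_a\to C_b$, but an abstract isomorphism of $G$-linear sets need not be realized by an $\oplus$-translation $x\mapsto x\oplus g$; so there is no reason that $b+\la C\ra$ equals $b\oplus H$ for the local subgroup $H$ extracted from one fiber. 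The paper's proof is devoted almost entirely to this point: it defines the equivalence relation $a\sim b$ iff $f_{a,b}$ is an $\oplus$-translation, proves (using Facts \ref{Fcont} and \ref{shortimage}, i.e.\ continuity on $B_1\times X$ and local constancy of definable maps from strongly long sets into short sets) that $\sim$ has only finitely many classes, and then uses genericity of $C$ and definable compactness of $G$ to pass to a single generic class, on which the fibers genuinely are $\oplus$-translates $C_b=C_{b_0}\oplus g(b)$; only then can one set $H=C_{b_0}\ominus b_0$ and $B'=\{g(b)\oplus b_0:b\in B\}$ and write $C=B'\oplus H$. Your "a fortiori generic" step skips all of this.

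A secondary issue: your candidate $H=\{\sum_i\lambda_i(t_i):t_i\in(-\eps_i,\eps_i)\}$ is a subset of $M^n$ centered at $0_{M^n}$, but $\la C\ra$ need not be contained in $G$ and $0_{M^n}$ need not be $0_G$; the local subgroup must be obtained by translating a fiber inside $G$, i.e.\ $H=C_{b_0}\ominus b_0$, not by truncating $\la C\ra$ in the ambient group. Your treatment of item (1) for a single fiber (local $G$-linearity from \eqref{cone1}, upgraded to global $G$-linearity of the fiber by connectedness, as in \cite[Lemma 2.4]{Ot-Pet}) is otherwise in the right spirit and matches the paper.
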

\proof We fix a definably connected short set $B$ and a $k$-long cone $C=B+\la C\ra$
as in Fact \ref{pantelis}.

For $b\in B$, let $C_b$ be the fiber $b+\la C\ra$. Note that for every $x\in C_b$,
and a sufficiently small neighborhood $V$ of $x$, we have $V\cap C_b=V\cap x+\la
C\ra$. Note also that each $C_b$ is an affine subset of $\la M^n,+\ra$.
 Thus, condition (\ref{cone1}) implies that each $C_b$,
locally near every $a\in C_b$, is a $G$-linear subset of $G$, and furthermore the
identity map is locally an isomorphism of $\la C_b,+\ra$ and $\la C_b,\oplus\ra$.
Because the affine topology and the group topology agree on $C$ (and because $C$ is
definably connected in $M^n$), each fiber $C_b$ is definably connected with respect
to the group topology. By \cite[Lemma 2.4]{Ot-Pet}, each $C_b$  is therefore a
$G$-linear (not only locally) subset of $G$ and  the identity map is an isomorphism
of the affine set $\la C_b,+\ra$ and the $G$-linear set $\la C_b,\oplus\ra$.

Let us summarize what we have so far: On one hand, the set $C=B+\la C\ra$ is a
generic set in $G$, which can be written as a disjoint union of affine sets
$\bigcup_{b\in B} C_b$. Furthermore, for each $a,b\in B$ the map
$$f_{a,b}(x)=x-a+b$$ is an isomorphism of the affine sets $C_a$ and $C_b$. On the
other hand, each $C_b$ is also a $G$-linear set, and the same maps $f_{a,b}:C_a\to
C_b$ are isomorphisms of $G$-linear sets (because the identity is an isomorphism of
$\la C_a,+\ra$ and $\la C_a,\oplus\ra$).

Our next goal is to show that, for many $a, b$ in $B$, each map $f_{a,b}(x)$ is not
only a translation in the sense of the group $\la M^n,+\ra$ but also a translation
in $\la G,\oplus\ra$.
\\

We define on $B$ the following equivalence relation: $a\sim b$ if there exists $g\in
G$ such that  we have $f_{a,b}(x)=x\oplus g$ for all $x\in C_a$. Note that for every
$a,b,c\in B$, we have $f_{b,d}\circ f_{a,b}=f_{a,d}$, so it is easy to check that
$\sim$ is an equivalence relation.

\begin{claim} There are only finitely many $\sim$-equivalence classes in
$B$.\end{claim}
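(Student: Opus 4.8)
The plan is to show that the equivalence relation $\sim$ on $B$ is itself definable and that it has finite index in a dimension-sense, so that the number of classes is finite by o-minimality (a definable equivalence relation on a definable set with infinitely many classes must have a class of dimension $<\dim B$ moving in a definable family, forcing one class to contain an interval's worth of representatives, which we rule out). First I would make the relation explicit: for $a,b\in B$, the condition $a\sim b$ asks for a single $g\in G$ with $x-a+b=x\oplus g$ for \emph{all} $x\in C_a$. Since $f_{a,b}\colon C_a\to C_b$ is already known to be an isomorphism of the $G$-linear sets $\la C_a,\oplus\ra$ and $\la C_b,\oplus\ra$, and both are (locally) infinitesimal-subgroup-like pieces of $G$, the map $f_{a,b}$ differs from an $\oplus$-translation by an "additive defect"; fixing one basepoint $x_0\in C_a$ we are forced to take $g=f_{a,b}(x_0)\ominus x_0$, and then $a\sim b$ iff this particular $g$ works uniformly, i.e. iff $f_{a,b}(x)\ominus x$ is constant on $C_a$. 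This is a definable condition on $\la a,b\ra$, so $\sim$ is a definable equivalence relation on $B$.

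Next I would fix a long-generic... no — $B$ is \emph{short}, so instead I fix $a\in B$ generic and consider the class $[a]=\{b\in B: a\sim b\}$, a definable subset of $B$. The key is to bound $\dim([a])$ from below by $\dim B$, equivalently to show $[a]$ is generic in $B$, whence finitely many translates... — more directly: I would show each class is relatively open in $B$, so by definable connectedness of $B$ there is only one class, or (without assuming connectedness is inherited correctly) that $B$ is partitioned into finitely many relatively open definable pieces. To see openness of classes: near a generic point the group topology and the ambient affine topology on $C$ agree (Fact~\ref{pantelis}), and the maps $f_{a,b}$ vary continuously in $b$; if $a\sim b$ then for $b'$ near $b$ the defect $x\mapsto f_{a,b'}(x)\ominus x$ is a small perturbation of a constant map, and since this defect always takes values in a $G$-linear (hence locally-a-subgroup) piece on which $\oplus$ and $+$ agree up to the cocycle in \eqref{cone1}, being "close to constant" there forces being exactly constant — I would extract this from the rigidity statement that a $G$-linear set containing $0$ contains an infinitesimal subgroup (so its "linear part" is discrete-free, preventing nearby non-equal translations). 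Thus $[a]$ is open; running this for every generic $a$ and using that $B$ has finitely many definably connected components gives finitely many classes.

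The main obstacle I expect is the rigidity step: converting "the defect map $x\mapsto f_{a,b}(x)\ominus x$ is nearly constant" into "it is exactly constant." The naive approach stumbles because $\ominus$ and $-$ only agree locally via the cocycle relation \eqref{cone1}, so the defect is a priori a genuine $C_a$-valued function, not obviously valued in a group. The right move is probably to work infinitesimally: pass to $G^{00}$ or to an infinitesimal neighborhood of a point of $C_a$, where by \cite[Lemma 2.4]{Ot-Pet} and the $G$-linear structure the relevant pieces are honest (type-definable) subgroups, observe that $f_{a,b}$ restricted there is an honest local homomorphism, and deduce that the defect is locally constant in $x$; combined with definable connectedness of $C_a$ in the group topology (already established above) this gives global constancy, hence $a\sim b$ is an open condition. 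If instead one prefers to avoid $G^{00}$, an alternative is to count torsion: the obstruction to $f_{a,b}$ being an $\oplus$-translation lives in a quotient that is $k$-dimensional and the family $\{f_{a,b}:b\in B\}$ is parametrized by the short set $B$, so its image is short, hence of long dimension $0$, hence finite after factoring out the torsion-free part — making $\sim$ have finite index directly. Either route yields the claim.
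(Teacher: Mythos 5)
Your high-level architecture is essentially the paper's: show that the defect of $f_{a,b}$ from being an $\oplus$-translation is constant on $C_a$ for all $b$ in some neighbourhood, propagate local constancy to global constancy via $G$-linearity and definable connectedness of $C_{a}$, and conclude finiteness from connectedness of (a representative set inside) $B$. The definability of $\sim$ and the propagation step are fine. But there is a genuine gap at exactly the point you flag as "the main obstacle": neither of your two proposed rigidity mechanisms actually yields local constancy of the defect.

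The mechanism the argument really needs is the orthogonality of short and strongly long sets, i.e.\ Facts~\ref{Fcont} and~\ref{shortimage}. Concretely: fix $a_0$, take by Fact~\ref{Fcont} an open $B_1\sub B$ and a definably compact strongly long $X\sub C_{a_0}$ on which $F(b,x)=f_{a_0,b}(x)$ is continuous; by definable compactness of $X$, for $b',b''$ close enough the map $h(x)=f_{a_0,b'}(x)\ominus f_{a_0,b''}(x)$ takes values in a prescribed \emph{short} neighbourhood $W$ of $0$; and then Fact~\ref{shortimage} (a definable map from a strongly long set into a short set is locally constant off a set of smaller long dimension) forces $h$ to be locally constant in $x$. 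Your route (a) tries to replace this by a rigidity of $G$-linear sets ("a $G$-linear set containing $0$ contains an infinitesimal subgroup, preventing nearby non-equal translations"), but this is false as a rigidity principle: an isomorphism of the $G$-linear sets $C_{a_0}\to C_b$ is a translation composed with an automorphism of the local group $H'$, and $\la M,+\ra$ can have plenty of definable automorphisms (e.g.\ scalar multiplications in the semi-linear part), so a defect that is "close to constant" need not be constant for any reason internal to the local group structure; what kills the nontrivial automorphisms is precisely that the defect function has strongly long domain and short range. Likewise the local-homomorphism observation only gives $f(x)\ominus f(x')=f(x\ominus x'\oplus x'')\ominus f(x'')$, not $f(x)\ominus f(x')=x\ominus x'$, which is the statement to be proved. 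Your route (b) is not salvageable as stated: a short set has long dimension $0$ but is in general infinite, and "finite after factoring out the torsion-free part" has no meaning here, so you cannot conclude finiteness of the index from shortness of the parameter set alone. To repair the proof you must route the argument through Facts~\ref{Fcont} and~\ref{shortimage} (or reprove their content), which is the actual engine of the paper's proof.
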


\proof  Assume towards contradiction that there are infinitely many classes. By
definable choice, we can find an infinite definable set of representatives for
$B/\sim$. We then replace $B$ by a definably connected component of this set,
calling it $B$ again. So, we may assume that any two $a,b\in B$ are in distinct
$\sim$-classes and that $B$ is still infinite and definably connected. We fix some
$a_0\in B$ and consider the map $F:B\times C_{a_0}\to G$, given by
$F(b,x)=f_{a_0,b}(x)$.

 Since $C_{a_0}$ is strongly long, we can find an open subset $B_1\sub
B$ and a strongly long set $X\sub C_{a_0}$, $\dim X=\dim C_{a_0}$,  such that $F$ is
continuous on $B_1\times X$ with respect to the group topology (Fact \ref{Fcont}).
Without loss of generality we can assume that $X$ is definably compact (we first
take a bounded $X$, then shrink it slightly, and take its topological closure).

Let us fix a  $G$-open chart $V\sub G$ containing $0_G$, and a homeomorphism with an
open affine set $\phi:V\to V'\sub M^{\ell}$ ($\ell=dim G$). Without loss of
generality $\phi(0_G)=0\in M^{\ell}$.  By identifying $V$ and $V'$, we may assume
that $V'\sub G$ is an open set with respect to both the affine and the $G$-topology.

By the definable compactness of $X$, for every neighborhood $W\sub M^{\ell}$ of $0$,
there is a neighborhood $B_2\sub B_1$ of $a_0$, such that for all $b',b''\in B_2$
and $x\in X$, we have $F(x,b')\ominus F(x,b'')\in W$. Indeed, if not then there are
definable curves $x(t)\in X$, $b_1(t), b_2(t)\in B_1$, with $b_1(t)$, $b_2(t)$
tending to $b$ and such that for all $t$,
$$F(x(t),b_1(t))\ominus F(x(t),b_2(t))\notin W.$$ Definable compactness of $X$
implies that $x(t)\to x_0\in X$, so by continuity we have $F(x_0,b)\ominus
F(x_0,b)\notin W, $ contradiction.

 We now fix $W\sub M^{\ell}$
a short neighborhood of $0$, and choose $B_2$ accordingly. If we take distinct
$b',b''$ in  $B_2$ then we obtain a map $h:X\to W$, defined by $h(x)=F(x,b')\ominus
F(x,b'')$. Because $X$ is strongly long, and $W$ is short, the map $h$ must be
locally constant outside a subset of $X$ of long dimension smaller than $k$ (Fact \ref{shortimage}). So, we have
an open neighborhood $V''\sub C_{a_0}$ and an element $g\in G$, such that for all
$x\in V''$, $f_{a_0,b'}(x)\ominus f_{a_0,b''}(x)=g$.

We claim that for all $x\in C_{a_0}$, we have  $f_{a_0,b'}(x)\ominus
f_{a_0,b''}(x)=g$.

First take $x\in V''$ and choose any $y,z\in C_{a_0}$ which are sufficiently close
to each other. Since $C_{a_0}$ is a $G$-linear set, $x\ominus y\oplus z$ is still in
$C_{a_0}$ and still in $V''$. So we have
$$f_{a_0,b'}(x\ominus y\oplus z)=f_{a_0,b'}(x)\ominus f_{a_0,b'}(y)\oplus
f_{a_0,b'}(z)$$ and
$$f_{a_0,b''}(x\ominus y\oplus z)=f_{a_0,b''}(x)\ominus f_{a_0,b''}(y)\oplus
f_{a_0,b''}(z).$$

By subtracting the two equations (in $G$), we obtain $$g=g\oplus
(f_{a_0,b'}(z)\ominus f_{a,b''}(z))\ominus (f_{a,b'}(y)\ominus f_{a,b''}(y)),$$ so
$$f_{a_0,b'}(z)\ominus f_{a,b''}(z)=f_{a,b'}(y)\ominus f_{a,b''}(y)$$ for all $y,z \in
C_{a_0}$ which are sufficiently close to each other. This implies that the function
$f_{a_0,b'}\ominus f_{a_0,b''}$ is locally constant on $C_{a_0}$ so by definable
connectedness, it must be constant on $C_{a_0}$. We therefore showed that
$f_{a_0,b'}\ominus f_{a_0,b''}=g$, so in fact $b'\sim b''$ contradicting our
assumption. Thus $\sim$ has only finitely many classes in $B$.\qed\smallskip

We now return to the relation $\sim$ with its finitely many classes  $B_1,\ldots,
B_m$, and consider the partition of $C$ into $\bigcup_{b\in B_i} C_b$, $i=1,\ldots,
m$. Note that for each $i=1,\ldots, m$ and every $b',b''\in B_i$, there exists $g\in
G$ such that $x\mapsto x\oplus g$ is an isomorphism of the $G$-linear sets $C_{b'}$
and $C_{b''}$.

Since $C$ was generic in $G$, one of these sets is also generic in $G$ (here we use
the definable compactness of $G$). So we assume from now on that for every
$b_1,b_2\in B$ there exists an element $g\in G$ such that $C_{b_1}=C_{b_2}\oplus g$.

Fix $b_0\in B$ and for every $b\in B$ choose an element $g(b)$ in $G$  such that
$C_b=C_{b_0}\oplus g(b)$. If we let  $B'=\{g(b)\oplus b_0:b\in B\}$ and
$H=C_{b_0}\ominus b_0$, then $C=B'\oplus H$.

Let's see that $H$ is as required. Indeed, the map $x\mapsto x\oplus b_0$ is an
isomorphism of the local subgroups $\la H,\oplus\ra$ and $\la C_{b_0},\oplus\ra$. As
we already pointed out, the identity map is an isomorphism of $\la C_{b_0},\oplus
\ra$ and $\la C_{b_0},+\ra$. Finally, $y\mapsto y-b_0$ is an isomorphism of the
affine sets $\la C_{b_0},+\ra$ and $\la \la C\ra,+\ra$. The composition of these
maps is an isomorphism of the local groups $\la H,\oplus\ra$ and
$$H'=\left\la \left(-\frac{a_1}{2},\frac{a_1}{2}\right)\times \cdots \times \left(-\frac{a_k}{2},\frac{a_k}{2}\right),+\right\ra$$ (it sends $0_G$ to $0$).
This ends the proof of Proposition \ref{groupH}.\qed

\section{The universal cover of $G$}

\subsection{Proof of Theorem \ref{thm1}}\label{proof-thm1}
We first prove the abelian case. We proceed with the same notation as in the previous seciton.  Namely, $\la
G,\oplus\ra$ is a definably connected, definably compact abelian group, and $H\sub G$ is the definable strongly
long set from Proposition \ref{groupH}.

Let $f': \la H', +\ra\to \la H, \oplus\ra$ be the acclaimed isomorphism of local
groups. We let $\CH=\la H\ra$ be the subgroup of $G$ generated by $H$. Since $H$ is
a local abelian subgroup of $G$ of dimension $k$, $\CH$ is a locally definable
abelian subgroup of $G$ of dimension $k$ (see \cite[Lemma 2.18]{Ot-Pet}). One can
show that the universal cover of $\CH$ is a locally definable subgroup $\widehat\CH$
of $\la M^k, +\ra$. Indeed, let $\widehat\CH =\la H'\ra$ be the subgroup of $\la
M^k, +\ra$ generated by $H'$. Then we can extend $f'$ to a map $f: \widehat\CH\to
\CH$ with, for every $x_1, \dots, x_l\in H'$,
\[
f(x_1+\dots+x_l)=f'(x_1)\oplus f'(x_2)\oplus\cdots \oplus f'(x_l)
\]
is a $\bigvee$-definable covering map for $\CH$. (The fact that $f$ is well-defined
is provided by the same argument as for \cite[Lemma 4.27]{ElSt}). Since $\wH$ is
divisible and torsion-free, it is the universal cover of $\CH$.

We let $\CH'_0$ be the subset of $M^k$ that consists of all short elements (by this
we mean all elements of $M^k$ all of whose coordinates are short). By \cite[Lemma
3.4]{pet-sbd}, $\la \CH'_0, +\ra$ is a subgroup of $\la M^k,+\ra$ and moreover, it
is a subset of $H'$. It follows that  $\CH_0=f(\CH'_0)$ is a subgroup of $\CH$ which
is isomorphic to $\CH_0'$ (note that by \cite{pet-sbd}, $\CH_0$ is a \Vdef set, but
not, in general, a definable one).

{\em From now on, in order to simplify the notation, we will write $+$ for the group
operation of $G$. In few cases we will also use $+$ for the usual operation on
$M^k$, and this will be clear from the context.}

 We  define $\CB=\bigcup_{n\in\bb{N}} B(n)$, where  $B$ is the definable short set
  from Proposition \ref{groupH}, and the notation $B(n)$ is given in Section \ref{notation}. Since each $B(n)$ is a short definable
set, $\CB$ is a short  locally definable subgroup of $G$.

\begin{claim}\label{H+B} $\CH+\CB=G$.\end{claim}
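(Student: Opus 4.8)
The plan is to show $\CH+\CB=G$ by using the genericity in $G$ of the set $B\oplus H$ from Proposition \ref{groupH}, together with the fact that $G$ is definably connected. First I would recall from Proposition \ref{groupH} that $B\oplus H$ is generic in $G$, which means finitely many $G$-translates of $B\oplus H$ cover $G$. Since $H\sub \CH$ and $B\sub \CB$, the set $B\oplus H$ is contained in $\CB+\CH$, and the latter is a subgroup of $G$ (both $\CB$ and $\CH$ are subgroups, $G$ is abelian, so their sum is a subgroup). Hence $\CB+\CH$ contains a generic set, and therefore finitely many translates of $\CB+\CH$ cover $G$; but $\CB+\CH$ is a subgroup, so all its translates that meet it coincide with it, and a finite union of distinct cosets covering $G$ forces $\CB+\CH$ to have finite index in $G$.

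Next I would argue that $\CB+\CH$, being a subgroup of finite index, is in fact all of $G$, using definable connectedness. The subgroup $\CB+\CH$ is locally definable and, since it contains the open-in-$G$ generic cone $C=B+\la C\ra$ (up to translation), it contains an open neighborhood of the identity; being a subgroup, it is then open in the group topology of $G$. A subgroup of finite index that is open is also closed (its complement being a finite union of open cosets). By definable connectedness of $G$ — note $\CB+\CH$ is a compatible locally definable subset, being a countable union of the definable sets $B(n)+$ (pieces of $\CH$), or more directly because finite-index open subgroups are definable — we conclude $\CB+\CH=G$.

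Alternatively, and perhaps more cleanly, I would invoke the standard fact that a locally definable subgroup of a definable group which contains a generic definable set must equal the whole definably connected group: genericity gives finite index, and a finite-index locally definable subgroup of a definable group is definable and clopen, hence equals $G$ by connectedness. Since $\CH + \CB \supseteq H \oplus B$ and the right-hand side is generic by Proposition \ref{groupH}(2), this immediately yields $\CH+\CB=G$.

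The main obstacle I anticipate is the bookkeeping around the two additive structures: $\la C\ra$ and $H$ are defined using $+$ on $M^n$, while genericity and the subgroup property live in $\la G,\oplus\ra$, and one must be careful that the translate $H = C_{b_0}\ominus b_0$ really sits inside $\CH = \la H\ra_\oplus$ and that $B\oplus H$ (with $\oplus$) rather than $B+\la C\ra$ (with $+$) is what is generic — but Proposition \ref{groupH}(2) is stated precisely for $B\oplus H$, so this is already handled. The only genuine point to verify is that a finite-index locally definable subgroup containing an open set is all of a definably connected $G$, which follows from the clopen characterization of connectedness together with the observation that such a subgroup is a compatible (indeed definable) subset.
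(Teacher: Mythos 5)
Your proof is correct and follows essentially the same route as the paper's: the paper likewise observes that $H+B$ is generic (by Proposition \ref{groupH}) and contained in the subgroup $\CH+\CB$, and then concludes from definable connectedness of $G$. You simply spell out the intermediate steps (finite index via genericity, definability and openness of the finite-index subgroup, clopen implies everything) that the paper leaves implicit.
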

\proof By Proposition \ref{groupH}, the set $H+B$ is a generic subset of $G$ and  is contained
in $\CH+\CB$ (we use here the fact that $B\sub \CB$ since $0\in B$). Since $G$ is
definably connected we have $\CH+\CB=G$.\qed

The following claim is crucial to the rest of the analysis.
\begin{claim}\label{compatible} The  group
 $\CH_0\cap \cal B$ is compatible in $\cal{B}$, so in particular locally definable.
 \end{claim}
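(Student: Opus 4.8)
The goal is to show that $\CH_0 \cap \CB$ is a compatible subgroup of $\CB$; by the definition of compatibility, this means that for every definable $X \subseteq \CB$, the intersection $X \cap \CH_0 \cap \CB = X \cap \CH_0$ is definable. Since $\CB = \bigcup_n B(n)$ is an increasing union of definable short sets, and every definable $X \subseteq \CB$ is contained in some single $B(n)$ by saturation, it suffices to show that $B(n) \cap \CH_0$ is definable for each fixed $n$. So the real content is: the (a priori only $\bigvee$-definable) group $\CH_0$ meets each short definable piece of $\CB$ in a definable set.

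The plan is to exploit the tension between ``long'' and ``short''. Recall $\CH_0 = f(\CH_0')$, where $\CH_0' \subseteq M^k$ consists of the short tuples, and $f$ is built by iterating the local isomorphism $f'\colon H' \to H$; recall also $\CH = \bigcup_\ell H(\ell)$. First I would observe that the map $f$ restricted to the short part is ``tame'': because $f'$ is a genuine local isomorphism of $H'$ with $H$, and short tuples stay inside $H'$, the restriction $f \res \CH_0'$ agrees locally with $f'$, and in fact (by the cocycle/induction argument as in \cite[Lemma 4.27]{ElSt}) $\CH_0 = f(\CH_0')$ is itself already a definable — not merely locally definable — subgroup of $G$ on each bounded piece; more precisely, for each $m$, the set $f(H'(m) \cap \CH_0')$ is definable and these exhaust $\CH_0$. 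Then I would fix a definable $X = B(n) \subseteq \CB$ and argue that $X \cap \CH_0$ is contained in $f(H'(m)\cap \CH_0')$ for some $m$ depending only on $n$: this is where one uses that $B$ is short and $\CH_0$ sits inside the strongly long direction, so that a product of $n$ translates of $BB^{-1}$ can only reach $\CH_0$-elements of bounded ``length''. This boundedness should follow from the long-cone decomposition of \cite{el-sbd} together with the fact that $H + B$ is generic and $C = B + \langle C\rangle$ is normalized, so the $B$-coordinate and the $\langle C\rangle$-coordinate of a point are unique; an element of $B(n)$ lying in $\CH_0$ must have $B$-part equal to $0$ up to the finitely many $\sim$-classes, pinning it into a bounded subset of $\CH_0$.

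Concretely, the key steps in order are: (1) reduce compatibility to showing $B(n) \cap \CH_0$ is definable for each $n$, using that definable subsets of $\CB$ live in some $B(n)$; (2) show $\CH_0$ is an increasing union of definable sets $D_m := f(H'(m) \cap \CH_0')$, each definable because $f'$ is a local isomorphism and short tuples remain in $H'$ where the cocycle argument applies; (3) show that for each $n$ there is $m = m(n)$ with $B(n) \cap \CH_0 \subseteq D_m$ — this is the crucial boundedness step, proved using genericity of $H + B$, normalization of the long cone $C = B + \langle C \rangle$, shortness of $B$, and the long/short orthogonality from \cite{el-sbd}; (4) conclude $B(n) \cap \CH_0 = B(n) \cap D_m$, which is definable. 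The main obstacle is step (3): controlling how ``far out'' in $\CH_0$ one can travel by multiplying together $n$ copies of $BB^{-1}$. The intuition — that $\CB$ is short and $\CH_0$ lies in the complementary long direction, so their intersection inside a fixed short piece must itself be bounded and hence definable — is clear, but turning it into a clean argument requires carefully tracking coordinates through the identification $C = B + \langle C\rangle$ and using uniqueness of the cone decomposition, together with the finiteness of $\sim$-classes established in the previous claim.
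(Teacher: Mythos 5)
Your step (1) --- reducing to the definability of $X\cap \CH_0$ for a single definable (hence short) $X\sub\CB$ --- matches the paper's first move. But the rest of the scheme has a genuine gap, and it starts at step (2). The sets you propose, $D_m = f\bigl(H'(m)\cap \CH_0'\bigr)$, are not definable: since $\CH_0'$ (the set of all short tuples of $M^k$) is already contained in $H'$, and $H'\sub H'(m)$, you simply have $H'(m)\cap\CH_0'=\CH_0'$ for every $m\geq 1$, so each $D_m$ is all of $\CH_0$ --- and the paper explicitly records that $\CH_0$ is a \Vdef\ set which is \emph{not} in general definable. So there is no exhaustion of $\CH_0$ by definable pieces of the form you describe, step (3) becomes vacuous (the inclusion $B(n)\cap\CH_0\sub D_m$ holds trivially) and step (4) has nothing to conclude from. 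Relatedly, the difficulty you isolate as ``the main obstacle'' --- controlling how far into $\CH_0$ one can travel by multiplying copies of $BB^{-1}$ --- is not where the problem lies: $B(n)\cap\CH_0$ is automatically contained in the short definable set $B(n)\cap H$, so no boundedness in the long direction needs to be established. The issue is purely whether this a priori $\bigvee$-definable intersection is cut out by a formula.

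The missing idea is a dichotomy for definably connected short sets relative to the subgroup of short elements. The paper replaces $X$ by $X\cap H$, pulls back via the injective local isomorphism to get a definable short set $X'=(f')^{-1}(X)\sub M^k$, and decomposes $X'$ into finitely many definably connected components. The key observation is that if a definably connected short subset of $M^k$ contains one short element then \emph{all} of its elements are short: each coordinate projection of such a set lies in a short interval, so coordinate differences between its points are short, and the short elements form a subgroup of $\la M,+\ra$. Hence every component of $X'$ is either entirely contained in $\CH_0'$ or disjoint from it, so $X'\cap\CH_0'$ is a finite union of components of $X'$, which is definable; applying $f'$ gives definability of $X\cap\CH_0$. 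Note that shortness of a set does not by itself place it inside $\CH_0'$ (a short interval around a tall point consists of tall elements), which is exactly why the component-wise dichotomy, rather than any containment or boundedness argument, is what carries the proof. The finiteness of the $\sim$-classes from the proof of Proposition \ref{groupH} plays no role here.
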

  \proof  Let $X\sub \CB$ be a definable set.
The set $\CB$ is a bounded union of short definable sets, so  $X$ is contained in
one of these and must also be short. We prove that, in general, the intersection of
any definable short $X\sub G$ with $\CH_0$ is definable.

  Since $\CH_0\sub H$ we may assume that $X$ is a
subset of $H$. Let us consider $X'=(f')^{-1}(X)\sub M^k$. Because $f'$ is injective
the set $X'$ is a finite union of definably connected short subsets of $M^k$. It is
easy to see that if one of these short sets contains a short element then every
element of it is short. Thus, if one of these components  intersects $\CH_0'$
non-trivially then it must be entirely contained in $\CH_0'$ (since $\CH_0'$ is the
collection of all short elements). Hence, $X'\cap \CH_0'$ is a finite union of
components of $X'$ and therefore definable. Its image under $f'$ is the definable set
$X\cap \CH_0$.\qed
\\

\noi\textbf{Note:} It is not true in general that $\CH\cap \CB$ is a compatible subgroup of $\CB$ (see Example \ref{example1} below).\\

The decomposition of $\widehat G$ is done through a series of steps.\\

 \noi {\bf
Step 1} By Claim \ref{compatible} and Fact \ref{edmundo}, the quotient
$\CK=\CB/(\CH_0\cap \CB)$ is locally definable and hence we obtain the following
short exact sequence of locally definable groups:
\begin{equation}\begin{diagram} \node{0}\arrow{e}
\node{\mathcal H_0\cap \CB} \arrow{e,t}{i_0}
\node{\CB}\arrow{e,t}{\pi_{\CB}} \node{\CK}
\arrow{e}\node{0}\end{diagram}\end{equation}
\begin{claim}\label{claimstep2}
$\dim H+\dim \CK=\dim G$.
\end{claim}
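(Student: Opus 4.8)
The plan is to work not with $\CB$ itself but with the slightly larger group $\CB':=\CB+\CH_0$, for which I will establish three things: that $\CB'$ is a short locally definable subgroup of $G$ of \emph{full} dimension $\dim G$; that $\CH_0$ is a compatible subgroup of $\CB'$; and that $\CB'/\CH_0$ is locally definably isomorphic to $\CK$. Granting these, additivity of o-minimal dimension along the exact sequence $0\to\CH_0\to\CB'\to\CB'/\CH_0\to 0$ of locally definable groups yields $\dim\CK=\dim(\CB'/\CH_0)=\dim\CB'-\dim\CH_0=\dim G-k$ (using $\dim\CH_0=k$, established below), and hence $\dim H+\dim\CK=k+(\dim G-k)=\dim G$. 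I will carry out the argument in the non-linear semi-bounded case; when $\CM$ expands a real closed field one has $k=0$ and the statement is immediate, and when $\CM$ is semi-linear it reduces to \cite{ElSt}.

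First I would check that $\CB'=\CB+\CH_0$ is a subgroup of the abelian group $G$ and a countable directed union of definable short sets (a sum of two short sets is short), so that it is a short locally definable subgroup. Next I would compute $\dim\CH_0$: by \cite[Lemma 3.4]{pet-sbd} the set $\CH_0'$ of short elements of $M^k$ lies inside $H'$, so $\CH_0=f(\CH_0')\subseteq f'(H')=H$ and, $f'$ being injective, $\dim\CH_0=\dim\CH_0'=k$ (the short part of $M$ is a locally definable subgroup of dimension $1$, as $\CM$ is non-linear). Then comes the key point: by Proposition \ref{groupH} the set $B\oplus H$ is generic in $G$, and, arising from a normalized long cone, it has \emph{unique} decomposition $x=b\oplus h$ with $b\in B$, $h\in H$; restricting to $B\times\CH_0$ gives an injection $(b,h)\mapsto b\oplus h$ of $B\times\CH_0$ into $G$, so $\dim(B\oplus\CH_0)=\dim B+\dim\CH_0=(\dim G-k)+k=\dim G$, where $\dim B=\dim G-k$ by Proposition \ref{groupH}. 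Since $B\oplus\CH_0\subseteq\CB+\CH_0=\CB'\subseteq G$, this forces $\dim\CB'=\dim G$.

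For the last two points I would argue as follows. Any definable subset of $\CB'$ lies, by saturation, in one of the defining short definable sets, hence is a definable short subset of $G$; by the argument of Claim \ref{compatible} its intersection with $\CH_0$ is definable, so $\CH_0$ is compatible in $\CB'$ and (Fact \ref{edmundo}) $\CB'/\CH_0$ is locally definable with $\dim(\CB'/\CH_0)=\dim\CB'-\dim\CH_0$. Finally, the composite $\CB\hookrightarrow\CB'\twoheadrightarrow\CB'/\CH_0$ is a locally definable surjection (onto, because $\CB+\CH_0=\CB'$) with kernel $\CB\cap\CH_0$, hence induces a locally definable isomorphism $\CK=\CB/(\CB\cap\CH_0)\xrightarrow{\sim}\CB'/\CH_0$.

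The main obstacle, I expect, is the full-dimensionality statement $\dim(\CB+\CH_0)=\dim G$: it rests on the slightly counterintuitive fact that $\CH_0$, although a \emph{short} group, has the same dimension $k$ as the strongly long box $H$ containing it, which is exactly what allows $B\oplus\CH_0$ to already exhaust the dimension of the generic set $B\oplus H$. Everything else is routine bookkeeping with Claim \ref{compatible}, Fact \ref{edmundo}, the second isomorphism theorem, and additivity of dimension for locally definable groups.
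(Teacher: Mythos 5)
Your argument is correct and reaches the conclusion by a genuinely different route from the paper's. The paper's proof is shorter: starting from Claim \ref{H+B} ($\CH+\CB=G$) it invokes the dimension formula $\dim \CH+\dim\CB-\dim(\CH\cap\CB)=\dim G$ for a sum of locally definable subgroups (justified only by a one-line remark about a small neighbourhood of $0$ in $\CH\cap\CB$), replaces $\CH\cap\CB$ by $\CH_0\cap\CB$ using the fact that $\CH_0$ is open in $\CH$, and then applies $\dim\CK=\dim\CB-\dim(\CH_0\cap\CB)$. You bypass both the subgroup-sum formula and the intersection $\CH\cap\CB$: you exhibit an explicit full-dimensional definable piece of $\CB+\CH_0$, namely $B\oplus f'\bigl((-a,a)^k\bigr)$ for a fixed short $a>0$, using the unique decomposition of the generic set $B\oplus H$ together with the key fact $\dim\CH_0=k$, and then transfer to $\CK$ by the second isomorphism theorem $\CK\cong(\CB+\CH_0)/\CH_0$. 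Both proofs ultimately rest on the same phenomenon --- the short group $\CH_0$ is open in $\CH$ and hence of full dimension $k$ --- which in the paper appears as $\dim(\CH_0\cap\CB)=\dim(\CH\cap\CB)$ and in yours as $\dim(B\oplus\CH_0)=\dim B+k$. What your route buys is concreteness: the dimension count is witnessed by an injective definable map rather than by an abstract formula. It incurs two small debts worth making explicit. First, the injectivity of $(b,h)\mapsto b\oplus h$ on $B\times H$ is extracted from the \emph{proof}, not the statement, of Proposition \ref{groupH}; it does hold, because the fibers $C_b$ of the normalized long cone are pairwise disjoint and right translation in $G$ is injective, but you should say so. Second, $\CH_0$ is presented only as a bounded (not obviously countable) union of definable sets, so $\CB+\CH_0$ is $\bigvee$-definable but perhaps not locally definable in the paper's restricted sense --- this is precisely why the paper only ever manipulates $\CH_0\cap\CB$ (Claim \ref{compatible}). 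For your argument this is harmless, since the dimension computation uses only the definable pieces $B(n)\oplus f'\bigl((-a,a)^k\bigr)$ and the quotient step can be run in the $\bigvee$-definable category, but as written the appeal to Fact \ref{edmundo} for $\CB+\CH_0$ is slightly outside its stated hypotheses.
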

\proof Because $\CH+\CB=G$, we have
$$\dim \CH+\dim \CB-\dim(\CH\cap \CB)=\dim G.$$
Indeed, this is true for definable groups, and can be proved similarly here by
considering a sufficiently small neighborhood of $0$ in  the locally definable
group $\CH\cap \CB$.

But $\CH_0$ is open in $\CH$ and therefore $\dim (\CH_0\cap \CB)=\dim(\CH\cap \CB)$,
so we also have $\dim \CH+\dim \CB-\dim (\CH_0\cap \CB)=\dim G$. Because
$\CK=\CB/(\CH_0\cap \CB)$, we have $\dim \CB-\dim(\CH_0\cap \CB)=\dim \CK$. We can
now conclude $\dim \CH+\dim\CK=\dim G$.\qed\vskip.2cm

\noi {\bf Step 2}. Since $\CH_0\cap \CB$ embeds into $\CH$ and $\CH_0\cap \CB$ is a
compatible subgroup of $\CB$,  we can apply Lemma \ref{pushout1} and obtain a
locally definable group $D$ (the pushout of $\CH$ and $\CB$ over $\CH_0\cap \CB$)
with the following diagram commuting
\begin{equation}\label{Ex-seq2}
\begin{diagram}
\node{0}\arrow{e}\node{\CH_0\cap \CB}\arrow{s,l}{id}
\arrow{e,t}{i_0}\node{\CB}
\arrow{s,r}{\gamma}\arrow{e,t}{\pi_{\CB}}
\node{\CK} \arrow{s,r}{id_{\CK}}\arrow{e} \node{0}\\
\node{0}\arrow{e}\node{\CH} \arrow{e,t}{j}\node{D}
\arrow{e,t}{\pi_D}\node{\CK} \arrow{e} \node{0}
\end{diagram}
\end{equation}
The maps $\gamma$ and $j$ are injective. Note that since $\CH$ and $\CB$ are
subgroups of $G$, we also have a commutative diagram (with all maps being inclusions)
\begin{equation}\begin{diagram} \node{\CH_0\cap \CB}\arrow{s} \arrow{e}\node{\CB}\arrow{s}\\
 \node{\CH}\arrow{e}\node{G}
 \end{diagram}
 \end{equation}
 It follows from the definition of pushouts that there exists a
 locally definable map $\phi:D\to G$ such that $\phi\gamma:\CB\to G$ and $\phi j:\CH\to G$ are the
 inclusion maps. The restriction of $\phi$ to $j(\CH)$ is therefore injective and
 furthermore,
 the set $\phi(D)$ contains $\CH+\CB$ and
 hence, by Claim \ref{H+B}, $\phi$ is surjective on $G$.
\\

\noi {\bf Step 3} Consider now the universal cover $f:\widehat \CH\to \CH$ where
$\widehat \CH$ is identified with an open subgroup of $\la M^k,+\ra$ as before. As
we saw, the group $\widehat \CH$ has a subgroup $\CH_0'$ which is isomorphic via $f$
to $\CH_0$. Hence, there is a locally definable embedding $\beta:\CH_0\cap \CB\to
\widehat \CH$ such that $f\beta=id_{\CH_0\cap \CB}$. Our goal is to use this
embedding in order to interpolate an exact sequence between the two sequences in
(\ref{Ex-seq2}) (see (\ref{Ex-seq3.5}) below).

We let $\widehat D$ be the pushout of $\widehat\CH$ and $\CB$ over $\CH_0\cap \CB$.
Namely, we have
\begin{equation} \begin{diagram}\label{Ex-seq2.5}
\node{0}\arrow{e}\node{\CH_0\cap \CB}\arrow{s,l}{\beta} \arrow{e,t}{i_0}\node{\CB}
\arrow{s,r}{\gamma''}\arrow{e,t}{\pi_{\CB}} \node{\CK}
\arrow{s,r}{id_{\CK}}\arrow{e} \node{0}\\
\node{0}\arrow{e}\node{\widehat\CH}\arrow{e,t}{\widehat \delta}\node{\widehat
D}\arrow{e,t}{\pi_{\widehat D}}\node{\CK}\arrow{e}\node{0}
\end{diagram}
\end{equation}

\noi{\bf Step 4} Next, we consider the diagram
\begin{equation}\label{Ex-seq2.7}\begin{diagram} \node{\CH_0\cap
\CB}\arrow{s,l}{\beta}\arrow{e,t}{i_0}\node{\CB}\arrow{s,r}{\gamma}\\
\node{\widehat \CH}\arrow{e,t}{jf}\node{D}
\end{diagram}
\end{equation}
Since $f\beta=id$, it follows from (\ref{Ex-seq2}) that the above diagram commutes.
Since $\widehat D$ was a pushout, there exists a locally definable
$\gamma':\widehat D\to D$ such that $\gamma'\gamma''=\gamma$ and $\gamma'\widehat
\delta=jf$.

Putting the above together with (\ref{Ex-seq2}) and (\ref{Ex-seq2.5}), we obtain
\begin{equation} \begin{diagram}\label{Ex-seq3.5}
\node{0}\arrow{e}\node{\CH_0\cap \CB}\arrow{s,l}{\beta} \arrow{e,t}{i_0}\node{\CB}
\arrow{s,r}{\gamma''}\arrow{e,t}{\pi_{\CB}} \node{\CK}
\arrow{s,r}{id_{\CK}}\arrow{e} \node{0}\\
\node{0}\arrow{e}\node{\widehat\CH}\arrow{s,l}{f}\arrow{e,t}{\widehat
\delta}\node{\widehat D}\arrow{s,r}{\gamma'}\arrow{e,t}{\pi_{\widehat D}}\node{\CK}
\arrow{s,r}{id_{\CK}}\arrow{e}\node{0}\\
\node{0}\arrow{e}\node{\CH}\arrow{e,t}{j}\node{D}\arrow{e,t}{\pi_D}\node{\CK}\arrow{e}\node{0}
\end{diagram}
\end{equation}
Note that in order to conclude that the above diagram commutes, we still  need to
verify that the bottom right square commutes, namely, $(id_{\CK})\pi_{\widehat
D}=(\pi_D)\gamma'$.

We now apply Lemma \ref{pushout2} and conclude that the group $D$ is the pushout of
$\CH$ and $\wh D$ over $\wH$. As a corollary we conclude, by Lemma \ref{pushout1} (and the
fact that $f$ is surjective),
\begin{equation}\label{conclusions} (i)\, \,  \pi_{\wh D}=(\pi_D)\gamma'\,\,(ii)\,\,
\ker(\gamma')=\wh \delta(\ker f)\,\, (iii) \mbox{ $\gamma'$ is surjective.}
\end{equation}
In particular, (\ref{Ex-seq3.5}) commutes.

 If we now return to the surjective $\phi:D\to G$ and compose it with
$\gamma'$, we obtain a surjection $\phi\gamma':\widehat D\to G$.

Let us summarize what we have so far:
\begin{equation} \begin{diagram}\label{Ex-seq3.7}
\node{0}\arrow{e}\node{\widehat\CH} \arrow{e,t}{\widehat \delta}\node{\widehat
D}\arrow{s,r}{\phi\gamma'}\arrow{e,t}{\pi_{\widehat D}}\node{\CK}
{}\arrow{e}\node{0}\\
\node{}\node{} \node{G}\node{}\node{}
\end{diagram}
\end{equation}


 \noindent{\bf Step 5}
Let  $\mu:\CU\to \CK$  be the universal cover of $\CK$, (see \cite[Theorem
3.11]{edel2} for its existence and its  local definability) and apply the pullback
construction from Proposition \ref{pullback1} to $\CU$, $\CK$ and $\widehat D$.

We obtain a $\bigvee$-definable group $\widehat G$ (the pullback of $\CU$ and
$\widehat D$ over $\CK$), with associated $\bigvee$-definable maps such that the
following sequences are exact and commute (since the kernels of $\pi_{\wh G}$ and
$\pi_{\wh G}$ are isomorphic we identify them both with $\wH$ and assume that the
map between them is the identity). By Proposition \ref{pullback1}, we also have
\begin{equation}\label{kermu}
\pi_{\wh G}(\ker(\eta))=\ker(\mu).
\end{equation}

\begin{equation}\label{Ex-seq3}
\begin{diagram}
\node{0}\arrow{e}\node{\widehat \CH}\arrow{s,l}{id} \arrow{e,t}{i}\node{\widehat G}
\arrow{s,r}{\eta}\arrow{e,t}{\pi_{\widehat G}}
\node{\CU} \arrow{s,r}{\mu}\arrow{e} \node{0}\\
\node{0}\arrow{e}\node{\widehat \CH} \arrow{e,t}{\widehat
\delta}\node{\widehat D} \arrow{e,t}{\pi_{\widehat D}}\node{\CK}
\arrow{e} \node{0}
\end{diagram}
\end{equation}

Because $\mu$ is surjective, so is $\eta$, so we obtain
 a surjective homomorphism $\wh F:=\phi\gamma'\eta:\widehat G\to G$. It can be inferred from what we have so far that $\CH=\wh F(i(\wH))$.


Note that $\dim \wh G=\dim \CU+\dim \wH$ and, since $\CU$ is the universal cover of
$\CK$, $\dim \CU=\dim \CK$. By Claim \ref{claimstep2}, we have $\dim \wh G=\dim G$.
Note also that  $\CU$ and $\widehat \CH$  are divisible (as connected  covers of
divisible groups) and torsion-free and therefore  so is  $\widehat G$. It follows
that $\widehat F:\widehat G\to G$ is isomorphic to the universal cover of $G$.

We therefore obtain
\begin{equation} \begin{diagram}\label{Ex-seq3.8}
\node{0}\arrow{e}\node{\widehat\CH} \arrow{e,t}{i}\node{\widehat G}\arrow{s,r}{\wh
F}\arrow{e,t}{\pi_{\widehat G}}\node{\CU}
{}\arrow{e}\node{0}\\
\node{}\node{} \node{G}\node{}\node{}
\end{diagram}
\end{equation}



This ends the proof of the first part Theorem \ref{thm1} for an abelian definably
connected, definably compact $G$.\\

Assume now that $G$ is an arbitrary definably compact, definably connected group. By
\cite[Corollary 6.4]{HPP2}, the group $G$ is the almost direct product of the
definably connected groups $Z(G)^0$ and $[G,G]$, and $[G,G]$ is a
semisimple group. The group $G$ is then the homomorphic image of the direct sum
$A\oplus S$ with $A$ abelian,  $S$ semi-simple, both definably compact, and the
kernel of this homomorphism is finite. We may therefore assume that $G=A\oplus S$.
By \cite[Theorem 4.4 (ii)]{HPP2}, the group $S$ is definably isomorphic to a
semialgebraic group over a definable real closed field so it must be short. It
follows that $\lgdim(G)=\lgdim(A)$. By the abelian case, we obtain the following for
the universal cover $\widehat A$ of $A$.
$$\begin{diagram}
\node{0}\arrow{e}\node{\wH} \arrow{e}\node{\widehat{A}}\arrow{s,r}{\wh F}\arrow{e}
\node{\CU}\arrow{e}\node{0}\\ \node{}\node{} \node{A}\end{diagram}
$$ Next, we consider $p:\widehat S\to S$ the universal cover of $S$ (note that $\widehat S$ is also
a compact group). By taking the direct product we obtain:
\begin{equation}
\begin{diagram}
\node{0}\arrow{e}\node{\wH} \arrow{e,t}{i}\node{\widehat G=\widehat{A}\oplus
\widehat S}
\arrow{e,t}{\pi}\arrow{s,r}{\hat F \cdot p}\node{\CU \oplus \wh S} \arrow{e} \node{0}\\
\node{}\node{} \node{G=A\oplus S}\node{}\node{}
\end{diagram}
\end{equation}

In order to finish the proof of Theorem \ref{thm1} we need to see:

\begin{lemma}\label{maxVdef} The group $\CH=\wh F(i(\wH))$ contains every connected, \Vdef
 strongly long subgroup of $G$.
\end{lemma}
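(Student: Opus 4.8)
The goal is to show that $\CH = \wh F(i(\wH))$ is the maximal connected, \Vdef, strongly long subgroup of $G$; that it is itself such a group was already recorded in Theorem \ref{thm1} (it is the image of $\wH$, which is generated by a semi-linear set of long dimension $k$, and strong longness and connectedness pass through the locally definable homomorphism $\wh F \co i$). So I only need the maximality. Let $\CL \sub G$ be any connected, \Vdef, strongly long subgroup; I must show $\CL \sub \CH$.

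First I would reduce to a local statement. Since $\CL$ is connected and, being strongly long, has $\lgdim(\CL) = \dim(\CL) = m$ for some $m$, pick an open definable neighborhood $L_0$ of $0$ in $\CL$ that generates $\CL$ (using Remark \ref{connected} in reverse: a connected locally definable group is generated by any such neighborhood, up to passing to the generated subgroup, which is open hence everything). It then suffices to show $L_0 \sub \CH$, since $\CH$ is a subgroup and $\CL = \la L_0\ra$. Now $L_0$ is a strongly long definable subset of $G$ of dimension $m$ containing $0$; shrinking, I may take $L_0$ to be a local subgroup of $G$ in the sense of Section \ref{localH} (a definably connected $G$-linear set through $0$ — the $G$-linearity near $0$ follows because $L_0$ lies in the subgroup $\CL$ and open subsets of definable, or here locally definable, subgroups are $G$-linear).

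Next comes the key point: I want to compare $L_0$ with the set $H$ from Proposition \ref{groupH}, or rather with $\CH_0 = f(\CH_0')$, the subgroup of short-modulo-tall elements inside $\CH$. The strategy is to intersect $L_0$ with the generic long cone $C = B + \la C\ra$ from Fact \ref{pantelis}. Since $L_0$ is strongly long of dimension $m$ and $C$ is generic, a generic long-dimensional piece of $L_0$ meets a translate of some fiber $C_b$; after translating (which is legitimate inside the subgroup $\CL$) I get an $m$-dimensional strongly long subset of $L_0$ sitting inside some $a + \la C\ra$, on which the group operation of $G$ agrees with $+$ of $M^n$ by \eqref{cone1}. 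Thus near a generic point, $L_0$ looks like an $m$-dimensional affine subspace $V \sub \la M^n,+\ra$; because $L_0$ is a local subgroup, $V$ must in fact be (a neighborhood of $0$ in) a coset-translate of a linear subspace, and strong longness forces $V$'s directions to be spanned by tall (long) directions. Comparing with the description of $H'$ and $\CH_0'$ as exactly the short-coordinate / tall-coordinate decomposition of $M^k$ (Lemma 3.4 of \cite{pet-sbd}), these long directions are precisely those generating $\wH$. Hence this affine piece of $L_0$ is contained in $\CH$, and since it is $m$-dimensional and $L_0$ is definably connected, a connectedness/analytic-continuation argument (the function "projection of $L_0$ into $G/\CH$" is locally constant, using that $G/\CH$ is short and Fact \ref{shortimage}-style reasoning) propagates $L_0 \sub \CH$ from the generic piece to all of $L_0$.

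\textbf{Main obstacle.} The delicate step is the last one: showing that "$L_0$ agrees with $\CH$ on a generic piece" forces $L_0 \sub \CH$ globally. The clean way is to look at the composition $L_0 \hookrightarrow G \to G/\CH$; but $G/\CH$ is only known to be short after the work of the theorem (Remark 3, part 3, gives $\CU = \wh G/i(\wH)$ short, and one needs the analogous statement downstairs for $G/\CH$, or rather to argue directly with $\CH_0$ and $\CB$), and one must know this quotient map is locally definable on $L_0$. Granting that, the image of the strongly long set $L_0$ in the short group $G/\CH$ has long dimension $0$, so by Fact \ref{shortimage} the map $L_0 \to G/\CH$ is locally constant off a lower-long-dimensional set; since it kills a neighborhood of $0$ (the generic-piece analysis, translated back to $0$ using that $L_0$ is a local subgroup) and $L_0$ is definably connected, it is identically $0$, i.e. $L_0 \sub \CH$. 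Making the "translate the generic piece back to a neighborhood of $0$" step rigorous — i.e. that a local subgroup which contains an $m$-dimensional strongly long affine germ at a generic point contains an $m$-dimensional strongly long germ at $0$ pointing in the same long directions — is where the local-subgroup isomorphism machinery of Section \ref{localH} (especially \cite[Lemma 2.4]{Ot-Pet}) does the real work, and is the step I would write out most carefully.
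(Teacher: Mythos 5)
There is a genuine gap, and it sits exactly where you flagged your ``main obstacle'': the propagation step via $G/\CH$ does not exist as stated. The group $\CH=\wh F(i(\wH))$ is in general \emph{not} a compatible subgroup of $G$ --- Example \ref{example1} of the paper exhibits a $G$ in which $\CH=\bigcup_{s\in S}\{s\}\times[0,b)$ with $S$ infinite, so $\CH$ is not even definable, let alone compatible. By Fact \ref{edmundo}, without compatibility there is no locally definable quotient $G/\CH$ and no locally definable projection $G\to G/\CH$, so there is no short locally definable group to map $L_0$ into and Fact \ref{shortimage} cannot be invoked. Saying ``granting that'' skips the one obstruction that forces the entire argument to be carried out upstairs: the paper proves the containment first in $\wh G$, where $i(\wH)=\ker(\pi_{\wh G})$ \emph{is} compatible and the quotient $\CU=\wh G/i(\wH)$ is a genuine short locally definable group, and only then descends to $G$ by pulling a subgroup $\CV\sub G$ back along $\wh F$, taking its connected component, and pushing forward again. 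A second, smaller gap is your claim that the long directions of the affine germ of $L_0$ at a generic point must be ``precisely those generating $\wH$'': nothing in the local-subgroup machinery identifies the tall directions of an arbitrary strongly long local subgroup with those of $H'$, and the comparison with the short-element subgroup of $M^k$ does not supply this.

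It is also worth noting that the paper's actual argument, once lifted to $\wh G$, is quite different in character from yours and avoids all local analysis. Given a connected strongly long $\CV\sub\wh G$ with $\dim\CV=\lgdim\CV=\ell$, one uses shortness of $\CU$ to see that some fiber $\pi^{-1}(u)\cap\CV$ has long dimension $\ell$, then translation inside the group $\CV$ to see that \emph{every} fiber over $\pi(\CV)$ does --- in particular $\lgdim(\wH\cap\CV)=\ell$. Writing $\CV=\bigcup_i V_i$ with each $V_i$ definable of dimension $\ell$, Lemma \ref{def-long} makes the set of $u$ with $\lgdim(\pi^{-1}(u)\cap V_i)=\ell$ definable, hence finite (its fibers already exhaust the dimension of $V_i$), so $\pi(\CV)$ is a bounded union of finite sets and $\CV\cap\wH$ has bounded index in $\CV$; connectedness then gives $\CV\sub\wH$. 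This fiber-counting argument is what replaces both your generic-germ comparison and your locally-constant propagation, and it is the step your proposal is missing a substitute for.
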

\begin{proof} We first prove the analogous result for the universal cover $\wh G$ of
$G$, namely we prove that $i(\wH)$ contains every connected, locally definable,
strongly long subgroup of $\wh G$.  For simplicity, we assume that $\wH\sub \wh G$.

Assume that $\CV\sub \widehat G$ is a connected, \Vdef subgroup with
$\dim(\CV)=\lgdim(\CV)=\ell$. Because $\lgdim(\wh G)=k$ we must have $\ell\leq k$.
We will show that the group $\CV\cap \wH$ has bounded index in $\CV$, so by
connectedness the two must be equal.

Consider $\CU$ from Theorem \ref{thm1}. Because $\CU$ is short, there exists at
least one $u\in \CU$ such that $\lgdim(\pi^{-1}(u)\cap \CV)=\ell$ (see \cite[Lemma
4.2]{el-sbd}). Since $\CV$ is a group we can use translation in $\CV$ to show that
{\em for every} $u\in \pi(\CV)$, we must have $\lgdim(\pi^{-1}(u)\cap \CV)=\ell$. In
particular, $\lgdim(\wH\cap \CV)=\lgdim(\pi^{-1}(0)\cap \CV)=\ell$.

Write $\CV=\bigcup_i V_i$ a bounded union of definable sets which we may assume to be
all strongly long of dimension $\ell$. For every $V_i$, consider the definable
projection $\pi(V_i)\sub \CU$. By Lemma \ref{def-long} (proved in the appendix), the
set $F_i$ of all $u\in \pi(V_i)$ such that $\lgdim(\pi^{-1}(u)\cap V_i)=\ell$ is
definable, so because $\dim(V_i)=\ell$, this set must be finite.

Let $F=\bigcup_i F_i\sub \pi(\CV)$. We claim that $F=\pi(\CV)$. Indeed, if
  $u\in \pi(\CV)\setminus F$ then by the definition of the $F_i$'s,
$\lgdim(\pi^{-1}(u)\cap V_i)<\ell$ for all $i$, which implies that
$\lgdim(\pi^{-1}(u)\cap \CV)<\ell$. This is impossible by our above observation, so
we must have $F=\pi(\CV)$.

Because $F$ is a bounded union of finite sets it follows that the index of
$\CV\cap \wH$ in $\CV$ is bounded. Since $\CV$ is connected it follows that
$\CV\cap \wH=\CV$, so $\CV\sub \wH$.

Assume now that $\CV\sub G$ is a connected, locally definable, strongly long
subgroup of $G$ and let $\wh \CV\sub \wh G$ be the pre-image of $\CV$ under $\wh F$.
The group $\wh \CV$ is strongly long and locally definable, and the connected
component of the identity (see \cite[Proposition 1]{BE}), call it $\wh \CV^0$, is
still strongly long (since it has the same dimension and long dimension as $\wh
\CV$). By what we just saw, $\wh \CV^0$ is contained in $\wH$ and hence $\wh
F(\CV^0)$ is a \Vdef subgroup of $\CH\cap \CV$, which has bounded index in $\CV$.
Because $\CV$ is connected it follows $\wh F(\CV^0)=\CV\sub \CH$.
\end{proof}

This ends the proof of Theorem \ref{thm1}.

\section{Replacing the locally definable group $\CU$ with a definable group}\label{barK}
We now proceed to prove Theorem \ref{thm2}. We first
assume again that $G$ is abelian. The goal is to replace the locally
definable  group $\CU$ in  (\ref{Ex-seq3.8})  with {\em a definable} short group.  We refer to the notation of (\ref{Ex-seq3}) and (\ref{Ex-seq3.8}).\\

 \noi {\bf Step 1} Let
$\Lambda=\ker(\widehat F)$ and let $\Lambda_1=\pi_{\widehat G}(\Lambda)\sub \CU$.

\begin{claim}
The universal cover $\CU$ of $\CK$ from (\ref{Ex-seq3}), together with $\Lam_1$,
satisfy the assumptions of Fact \ref{fromep1}. Namely, $\CU$ is connected,
generated by a definably compact set and there is a definable set $X\sub \CU$ such
that $X+\Lambda_1=\CU$. Moreover, $\Lambda_1$ is finitely generated.
\end{claim}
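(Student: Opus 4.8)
The plan is to verify, one by one, the four assertions packaged into the claim, using the explicit construction of $\widehat G$, $\CU$, $\widehat D$, $\CK$ and $\CB$ from the earlier steps.

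\emph{Connectedness of $\CU$.} The group $\CU$ is, by Step 5, the universal cover of $\CK$, and by \cite[Theorem 3.11]{edel2} a universal cover is connected; alternatively $\CK=\CB/(\CH_0\cap\CB)$ is a quotient of $\CB=\bigcup_n B(n)$, which is generated by the definably connected set $B$ containing $0$, hence $\CB$ is connected by Remark \ref{connected}, hence so is its quotient $\CK$ and so is the universal cover $\CU$.

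\emph{$\CU$ is generated by a definably compact set.} Here I would trace the short sets through the construction. The set $B$ from Proposition \ref{groupH} may be taken definably compact (replace it by a slightly shrunk bounded subset and take its closure, exactly as done in the proof of Proposition \ref{groupH} for the set $X$). Then $B(1)=BB^{-1}$ is definably compact and generates $\CB$. Its image $\pi_{\CB}(B(1))=:\bar X$ in $\CK$ is definably compact and generates $\CK$. Finally, pull $\bar X$ back along the universal cover $\mu:\CU\to\CK$: pick a definably compact $X\sub\CU$ with $\mu(X)=\bar X$ (possible because $\mu$ restricted to a suitable definable set is a definable covering map, so definably compact sets lift to definably compact sets). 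Then $X$ generates $\CU$, since $\bar X$ generates $\CK$ and $\ker\mu$ is central and contained in the subgroup generated by any generating lift — more carefully, the subgroup $\langle X\rangle$ surjects onto $\CK$ and contains $\ker\mu$ because $\CU$ is divisible and $\ker\mu$ is... here I should instead argue directly: $\langle X\rangle$ is an open (positive-dimensional, same dimension) locally definable subgroup of the connected group $\CU$, hence equals $\CU$. That is the cleanest route and avoids torsion subtleties.

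\emph{$X+\Lambda_1=\CU$ and $\Lambda_1$ finitely generated.} Recall $\Lambda=\ker(\widehat F)$ and $\Lambda_1=\pi_{\widehat G}(\Lambda)$. Since $\widehat F$ is the universal cover of $G$ and $G$ is abelian definably compact, $\ker(\widehat F)\simeq\Z^{\dim G}$ by Remark 2 after Theorem \ref{thm1} (citing \cite[Corollary 1.5]{edel2} and \cite{pet-sbd}); in particular $\Lambda$ is finitely generated, hence so is its homomorphic image $\Lambda_1$. For $X+\Lambda_1=\CU$: I want a definable set $X\sub\CU$ with $X+\Lambda_1=\CU$. The point is that $\CU/\Lambda_1$ is a quotient of $\CU$ that maps onto $\CK/\mu(\ker\widehat F\cdot\text{stuff})$ — more directly, $\widehat F$ factors through $\widehat G/\Lambda\cong G$, and chasing the exact sequence (\ref{Ex-seq3}) one gets that $\pi_{\widehat G}$ induces a surjection $G=\widehat G/\Lambda\to\CU/\Lambda_1$ with kernel the image of $\widehat\CH$; since $G$ is definably compact it is covered by finitely many translates of a definable set, whose image is a definable $X\sub\CU$ (or rather a definable subset of $\CU$ mapping onto $\CU/\Lambda_1$) with $X+\Lambda_1=\CU$. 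To make this precise I would use that $\CU$ is generated by the definable short set from Theorem \ref{thm1}, so $\CU=\bigcup_n Y(n)$ for a definable $Y$; then $\CU/\Lambda_1$ is a quotient of $G$, hence of a definable set, so it is covered by finitely many translates of the image of $Y$, i.e. finitely many cosets $y_i+\Lambda_1$ work inside some $Y(n)$, and $X:=Y(n)$ does the job.

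The step I expect to be the main obstacle is the precise bookkeeping for $X+\Lambda_1=\CU$: one must show that $\Lambda_1$ meets every fiber of $\CU\to\CU/\Lambda_1$ in the pattern that makes a single definable $X$ suffice, which ultimately rests on $\CU/\Lambda_1$ being a quotient of the definably compact group $G$ (so that definable compactness — finitely many translates of a definable generating set — can be invoked) together with the fact that $\widehat\CH$, the "extra" part of $\widehat G$, is killed when passing from $\widehat G$ to $\CU/\Lambda_1$. Getting the exactness chase in (\ref{Ex-seq3}) right, so as to identify $\CU/\Lambda_1$ with the correct quotient of $G$, is the delicate part; everything else is routine once the definably compact generating set is carried through the two covering maps $\mu$ and $\pi_{\widehat G}$.
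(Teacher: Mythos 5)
Your decomposition of the claim into its four assertions is right, and your treatment of the connectedness of $\CU$ and of the finite generation of $\Lambda_1$ (via $\ker(\wh F)\simeq \pi_1^{def}(G)\simeq \Z^{\dim G}$) is sound. But the route you take for the definably compact generating set is genuinely different from the paper's, and it is exactly there that you have a gap. You build the generating set downstairs: take $B$ definably compact, push $B(1)$ into $\CK$, and then \emph{lift} the resulting definably compact set along the universal cover $\mu:\CU\to\CK$, asserting that ``definably compact sets lift to definably compact sets.'' That assertion is not a quotable fact and is where the real content of the claim lives: a definable $X\sub\CU$ with $\mu(X)=\bar X$ exists by saturation, but its closure in $\CU$ being definably compact requires an argument (one has to run a curve through evenly covered neighborhoods and show it eventually stays in a single sheet). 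A second, smaller issue: you propose to shrink $B$ and take its closure, but $B$ enters the construction through the generic cone $C=B+\la C\ra$ of Proposition \ref{groupH}, and shrinking $B$ can destroy the genericity on which Claim \ref{H+B} ($\CH+\CB=G$) rests; you would have to re-verify that. Your fallback for generation --- $\la X\ra$ is open in the connected $\CU$, hence everything --- is fine once path-connectedness of $\CU$ is invoked, but it does not repair the definable compactness of $X$ itself, which is what Fact \ref{fromep1} actually requires.

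The paper works entirely upstairs in $\wh G$ and gets all the assertions from one set. It takes any definable $X\sub\wh G$ with $\wh F(X)=G$ and proves that $Cl(X)$ is definably compact: if not, $\wh G$ would contain a $1$-dimensional definable subgroup $G_0$ that is not definably compact, and since $\ker(\wh F)\cap G_0$ is finite (hence trivial), $G_0$ would inject into the definably compact $G$ --- a contradiction. Then $X+\ker(\wh F)=\wh G$, so $X$ generates $\wh G$ (after making it path connected), and applying $\pi_{\wh G}$ yields simultaneously that $\pi_{\wh G}(X)$ is definably compact, definably connected, generates $\CU$ (whence $\CU$ is connected by Remark \ref{connected}), and satisfies $\pi_{\wh G}(X)+\Lambda_1=\CU$. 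This also replaces your somewhat circuitous argument for $X+\Lambda_1=\CU$ via the abstract quotient $\CU/\Lambda_1\cong G/\CH$ (which is delicate, since $\CH$ is in general not a compatible subgroup of $G$) by a one-line push-forward of $X+\Lambda=\wh G$ along $\pi_{\wh G}$. If you want to keep your bottom-up construction, you must supply the lifting lemma for $\mu$; otherwise the top-down argument is the cleaner fix.
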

\begin{proof} The group  $\wh G$ is the universal cover of $G$. We first find a definable, definably connected,
definably compact $X\sub \wh G$ which contains the identity, such that $\wh F(X)=G$.
We start with a definable $X\sub \wh G$ such that $\wh F(X)=G$ and then replace it
with $Cl(X)$. We claim that $Cl(X)$ is definably compact. Indeed, if not then by
\cite[Lemma 5.1 and Theorem 5.2]{ed1}, $\wh G$ has a definable, 1-dimensional
subgroup $G_0$ which is not definably compact. Because $\mu$ is locally definable,
its restriction to $G_0$ is definable so $ker(\wh F)\cap G_0$ is finite and
therefore trivial. Hence $\wh F(G_0)$ is a definable subgroup of $G$ that is not
definably compact, contradicting the fact that $G$ is definably compact. Thus, we
can find a definably compact $X'$ with $X'+\ker(\widehat F)=\wh G$. By \cite[Fact
2.3(2)]{ep-defquot}, $X'$ generates $\wh G$.

 By \cite[Claim 3.8]{edel2}, $\wh G$ is path connected so we can easily replace $X'$ by $X_1\supseteq
X'$ which is definably compact and path connected (connect any two definably
connected components of $X'$ by a definable path). To simplify we call this new set
$X$ again.

Also, by \cite[Theorem 1.4 and Corollary 1.5]{edel2}, $\ker(\widehat F)$ is
isomorphic to the fundamental group of $G$, $ \pi^{def}_1(G)$, which is finitely
generated. It follows that $\Lambda_1$ is finitely generated, $\CU=\pi_{\widehat
G}(X)+\Lambda_1$, and $\pi_{\widehat G}(X)$ is definably compact and definably
connected. Since $X$ generates $\wh G$, the set $\pi_{\widehat G}(X)$ generates
$\CU$. By Remark \ref{connected}, $\CU$ is connected.
\end{proof}

We can now apply Fact \ref{fromep1} and conclude that there is a definably compact
group $\overline{K}$ and a $\bigvee$-definable surjection $\widehat \mu:\mathcal U
\to \overline{K}$ with $\ker(\widehat \mu)=\Lambda_0\sub \Lambda_1.$


 Our  goal is to prove: {\em There are
 locally definable extensions $\overline{G}$ and $G'$ of $\overline{K}$, by the group $\wH$ and $\CH$, respectively,  and surjective
homomorphisms from $\overline{G}$ and $G'$ onto $G$}.

First, by Lemma \ref{ext3}, we have a locally definable group $\widehat
\CH'=\ker(\widehat \mu\pi_{\wh G})=\pi_{\wh G}^{-1}(\Lambda_0)\sub \wh G$ such that (we write $i$
for the identity on $\widehat \CH$ on the top left) the diagram commutes and the
following sequences are exact.

\begin{equation}\label{Ex-seq5}
\begin{diagram}
\node{0}\arrow{e}\node{\widehat \CH}\arrow{s,l}{i} \arrow{e,t}{i}\node{\widehat
G}\arrow{s,r}{id}\arrow{e,t}{\pi_{\wh G}}\node{\mathcal U}
 \arrow{s,r}{\widehat \mu}\arrow{e} \node{0}\\
\node{0}\arrow{e}\node{\widehat \CH'} \arrow{e,t}{id}\node{\widehat G}
\arrow{e,t}{\widehat \mu\pi_{\wh G}}\node{\overline{K}} \arrow{e} \node{0}
\end{diagram}
\end{equation}

Because $ker (\wh \mu)\sub \pi_{\hat G}(\Lambda)$, the group $\wH'$ is contained in
the group $i(\wH)+\Lambda$. Since $i(\wH)$ is a divisible subgroup of $\wH'$, there
exists a subgroup $\Lambda'\sub \Lambda$ such that $\wH'$ equals the direct sum of
$i(\wH)$ and $\Lambda'$. Because $\ker(\pi_{\wh G})=i(\wH)$, the group $\Lambda'$ is
isomorphic, via $\pi_{\wh G}$, to $\Lambda_0$, so $\Lambda'$ is finitely generated. We now
have a group homomorphism $p:\wH'\to \wH$, given via the identification of $\wH'$
with $i(\widehat \CH) \oplus \Lambda'$. Namely, $p(i(h)+\lambda)=h$.


We claim that $p$ is a locally definable map. Indeed, $\wH'$ is the union of sets of the form
$i(H_i)+F_i$, where $H_i$ is definable and $F_i$ is a finite subset of $\Lambda'$.
Because the sum of $\wH$ and $\Lambda'$ is direct, each element $g$ of $i(H_i)+F_i$ has
a unique representation as $g=i(h)+f$, for $h\in H_i$ and $f\in F_i$. Therefore the restriction of $p$ to $i(H_i)+F_i$ is definable. It follows
that $p$ is  locally definable.
\\

\noi{\bf Step 2. } We apply Proposition \ref{pushout1}
to the diagram $$\begin{diagram}\node{\widehat \CH'}\arrow{e,t}{id}\arrow{s,l}{p}\node{\widehat G}\\
\node{\wH}
\end{diagram}$$ and obtain a locally definable pushout $\overline{G}$,  such that the following diagram commutes and the sequences are exact:
\[
\begin{diagram}
\node{0}\arrow{e}\node{\widehat \CH'}\arrow{s,l}{p} \arrow{e,t}{id}\node{\widehat
G}\arrow{s,r}{\widehat \alpha}\arrow{e,t}{\hat\mu\pi_{\widehat G}}\node{\overline{K}}
 \arrow{s,r}{id}\arrow{e} \node{0}\\
\node{0}\arrow{e}\node{\wH} \arrow{e,t}{i_1}\node{\overline{G}}
\arrow{e,t}{\pi_{\overline{G}}}\node{\overline{K}} \arrow{e} \node{0}
\end{diagram}
\]

Because $p$ is surjective the map $\wh \alpha:\wh G\to \overline{G}$ is also
surjective. Moreover, by Lemma \ref{pushout1}, the kernel of $\wh \alpha$ equals $\ker
p=\Lambda'$ so is contained in $\Lambda=\ker(\wh F)$.
\\

\noi{\bf Step 3. } We now have surjective maps $\widehat F:\widehat G\to G$ and $\wh
\alpha:\widehat G\to \overline{G}$, both $\bigvee$-definable with $\ker(\wh
\alpha)\sub \ker(\widehat F)$. By Lemma \ref{ext4} we have a $\bigvee$-definable surjective
$\overline{F}:\overline{G}\to G$, with $\ker(\overline{F})= \wh \alpha(\ker (\wh F))$.
We therefore obtained the following  diagram:

\begin{equation}\label{abeliancase3}
\begin{diagram}
\node{0}\arrow{e}\node{\wH} \arrow{e,t}{i_1}\node{\overline{G}}
\arrow{e,t}{\pi_{\overline{G}}}\arrow{s,r}{\overline{F}}\node{\overline{K}} \arrow{e} \node{0}\\
\node{}\node{} \node{G}\node{}\node{}
\end{diagram}
\end{equation}

Finally, let us calculate $\ker(\overline{F})$: Recall that $\Lambda'$ is isomorphic
to $\Lambda_0$ the kernel of the universal covering map $\wh \mu:\CU\to
\overline{K}$. Because $\overline{K}$ is a short definably compact group, it follows from \cite{EO}
that $\ker(\wh \mu)=\pi^{def}_1(\overline{K})=\Z^d$, where
$\pi^{def}_1(\overline{K})$ is the o-minimal fundamental group of $\overline{K}$ and
$$d=\dim(\overline{K})=\dim(\CU)=\dim(G)-k,$$ for $k=\lgdim(G)$. The map $\wh F:\wh
G\to G$ is the universal covering map of $G$ and therefore, as shown in
\cite[Theorem 1.4, Corollary 1.5]{edel2}, $\ker(\wh F)=\pi^{def}_1(G)=\Z^{\ell}$, for
some $\ell$. Furthermore, for every $m\in \N$, the group of $m$-torsion points
$G[m]$ is isomorphic to $(\Z/m\Z)^{\ell}$. By \cite[Theorem 7.6]{pet-sbd},
$G[m]=(\Z/m\Z)^{\dim(G)}$, hence we can conclude $$\Lambda=\ker(\wh
F)=\pi_1^{def}(G)=\Z^{\dim G}.$$

We now have $\ker (\overline{F})=\wh \alpha(\Lambda)\simeq \Lambda/\Lambda'$, with
$\Lambda\simeq \Z^{\dim(G)}$ and $\Lambda'\simeq \Z^{\dim(G)-k}$. Hence,
$\ker(\overline{F})$ is isomorphic to the direct sum of $\Z^{k}$ and a finite group,
as required.
\\

\noi{\bf Question } Can $\overline K$ be chosen so that $\ker(\overline{F})\simeq \Z^k$?\\
\\

Next, consider  $\CH\sub G$ as in Theorem \ref{thm1}. We want to see that we can
obtain a similar diagram to  (\ref{abeliancase3}), with $\CH$ instead of $\wH$. For
simplicity, assume that $i_1$ is the identity. First notice that by the last clause
of Theorem \ref{thm1}, we must have $\overline{F}(\wH)\sub \CH$. However, using
exactly the same proof as in Lemma \ref{maxVdef}, we can show that that
$\overline{F}(\wH)$ is also the largest connected  strongly long, locally definable,
subgroup of $G$, hence it equals $\CH$. We therefore have
$$\begin{diagram}
\node{\wH}\arrow{e,t}{i_1}\arrow{s,l}{\overline{F}\upharpoonright\wH}\node{\overline{G}}\\
\node{\CH}
\end{diagram}
$$

We can now obtain $G'$, the pushout of $\overline{G}$ and $\CH$ over $\wH$:
\[
\begin{diagram}
\node{0}\arrow{e}\node{\wH}\arrow{s,l}{\overline{F}\upharpoonright\wH} \arrow{e,t}{
i_1}\node{\overline{G}}\arrow{s,r}{
\alpha'}\arrow{e,t}{\pi_{\overline{G}}}\node{\overline{K}}
 \arrow{s,r}{id}\arrow{e} \node{0}\\
\node{0}\arrow{e}\node{\CH} \arrow{e,t}{i'}\node{{G'}}
\arrow{e,t}{\pi_{G'}}\node{\overline{K}} \arrow{e} \node{0}
\end{diagram}
\]

Clearly,  $\ker(\overline{F}\upharpoonright \wH)\sub \ker(\overline{F})$, so by
Proposition \ref{pushout1}, $\ker(\alpha')=i(ker \overline{F}\upharpoonright H)\sub
\ker \overline{F}$. By Lemma \ref{ext4}, we have a homomorphism from $G'$ onto $G$
as we want. We therefore have:
\begin{equation}\label{abeliancase4}
\begin{diagram}
\node{0}\arrow{e}\node{\CH} \arrow{e,t}{i'}\node{G'}
\arrow{e,t}{\pi_{\overline{G}}}\arrow{s,r}{h'}\node{\overline{K}} \arrow{e} \node{0}\\
\node{}\node{} \node{G}\node{}\node{}
\end{diagram}
\end{equation}
This ends the switch from (\ref{abeliancase3}) to (\ref{abeliancase4}), and with
that the proof of Theorem \ref{thm2} in the case that $G$ is abelian. In order to
conclude the same result for arbitrary definably compact, definably connected $G$,
we repeat the same arguments as in the last part of the proof of Theorem
\ref{thm1}.\qed\vskip.2cm

\subsection{Special cases}\label{somexample}
As was pointed out earlier, we use Fact \ref{fromep1} to guarantee that there is a
definable group $\overline{K}$ and a $\bigvee$-definable surjection $\widehat
\mu:\mathcal U \to \overline{K}$ with $\Lambda_0:=\ker(\wh \mu)$ a subgroup of
$\pi_{\wh G}(\ker \wh F)$ (see notation of Theorem \ref{thm1}). In certain simple
cases we can see directly why such $\Lambda_0$ exists, without referring to
Fact \ref{fromep1}:

Assume $G$ is abelian. Let $\CK$ and $\CH$ be as in Section \ref{proof-thm1}.
Namely, $\CK$ is the group obtained as the quotient of the locally definable
subgroup $\CB$ of $G$ by the compatible subgroup $\CH_0\cap \CB$, and $\CH$ is the
largest locally definable, connected strongly long subgroup of $G$.
\\

\noindent (1) {\bf Assume that $\CK$ is definable}.
\vskip.2cm

In this case we take $\Lam_0=\ker(\mu)$, where $\mu:\CU\to \CK$. Obviously,
$\CU/\Lambda_0$ is definable, so we need only to see that $\Lam_0\sub \pi_{\wh
G}(\ker \wh F)$. Let $u\in \ker(\mu)$. By (\ref{kermu}), $u=\pi_{\wh G}(v)$, for
some $v\in \ker(\eta)$. But then $\wh F(v)=\phi \gamma' \eta(v)=0$,so $U\in \pi_{\widehat G}(\ker \widehat F).$\\

\noi (2) {\bf Assume that $\CH$ is definable}.
\vskip.2cm

We denote by $\overline{K}$ the definable group $G/\CH$. From Theorem \ref{thm1} and
its proof we obtain the following commutative diagram.
$$\begin{diagram}
\node{0}\arrow{e}\node{\wH}\arrow{e,t}{i}\arrow{s,l}{f}\node{\wh G}\arrow{s,l}{\wh
F}\arrow{e,t}{\pi_{\wh G}}\node{\CU}\arrow{e}\node{0} \\
\node{0}\arrow{e}\node{\CH}\arrow{e,t}{id}\node{G}\arrow{e,t}{\pi_G}\node{\overline{K}}\arrow{e}\node{0}
\end{diagram}
$$
But now there is a unique map $\mu:\CU\to \overline{K}$ which makes the above
diagram commute, and it is easy to verify by construction that $\ker(\mu)\sub
\pi_{\wh G}(\ker(\wh F))$. We now take $\Lambda_0=\ker(\mu)$.
\\

\section{Examples}\label{nonextension}

In this section we provide examples that motivate the statements of Theorem
\ref{thm1} and \ref{thm2}.
 More specifically, we give examples of definably compact groups  which cannot themselves be written as extensions of short
 (locally) definable groups by strongly long (locally) definable subgroups. This is what forces us to move our analysis to the
 level of universal covers.

In the following examples, we fix $\cal{M}=\langle M, +, <, 0, R\rangle$ to be an
expansion of an ordered divisible abelian group by a real closed field $R$, whose
domain is a bounded interval $(0, a)\sub M$. In particular, $\cal M$ is
semi-bounded, o-minimal, and $(0, a)$ is short. Let also $b\in M$ be any tall
positive element. In the first two examples, we define semi-linear groups which have
the same domain $[0, a)\times [0,  b)$ but different operations.

\begin{example}\label{example1} Pick any $0< v_1< a$ such that $a$ and $v_1$ are $\bb{Z}$-independent. Let $L$ be
 the subgroup of $\la M^2, +\ra$ generated by the vectors $\la a, 0\ra$ and $\la v_1, b\ra$, and
 let $G=\la [0, a)\times [0,  b), \star, 0\ra$ be the group with
\[
x\star y = z\, \Lrarr\, x+y-z\in L.
\]
By \cite[Claim 2.7(ii)]{ElSt}, $G$ is definable.

Let us see what the various groups of Theorems \ref{thm1} and \ref{thm2} are in this
case.

We let $\wh G$ be the subgroup of $M^2$ generated by $[0,a]\times [0,b]$. The group
$\wh G$ is torsion-free and it is easy to see that there is a locally definable
covering map $\wh F:\wh G\to G$. Hence, $\wh G$ is the universal cover of $G$. The
group $\wH=\{0\}\times \bigcup_n(-nb,nb)$,  is a locally definable compatible
subgroup of $\wh G$ and the quotient $\wh G/\wH$ is isomorphic to the short group
$\bigcup_n(-na,na)$.

We have $\lgdim(\wH)=\dim(\wH)=1$, so $\wH$ is strongly long. As in the proof of
Proposition \ref{maxVdef}, the group $\wH$ is the largest strongly long, connected,
locally definable subgroup of $\wH$.

Now, we let $\CH=\wh F(\wh \CH)$.  This is  the subgroup of $G$ generated by the set
$H=\{0\}\times [0, b)$ and we can describe it explicitly. Let $S\sub [0,a)$ be the
set containing all elements of the form $n(a-v_1) \mod a$. By the choice of $v_1$,
the set $S$ has to be infinite. By the definition of the operation $\star$, it is
easy to see that
\[
\CH=\bigcup_{s\in S}  \{s\}\times [0, b),
\]
which is not definable (so in particular not compatible in $G$). This shows the need
in Theorem \ref{thm1} to work with the universal cover of $G$ rather than with $G$
itself. Note that $\wh F$ restricted to $\wh H$ is an isomorphism onto $\CH$.

In fact, $G$ does not contain any infinite strongly long  definable subgroup.
Indeed, if it did, then its connected component should be contained in $\CH$ and
therefore the pre-image of this component under $\wh F\upharpoonright \wH$ would be a
proper definable subgroup of $\wH$ and, thus, of $\la M, +\ra$, a contradiction.

Now consider the subgroup $K=\la [0, a)\times \{0\}, \star, 0\ra$ of $G$ and let
$\wh K$ be its universal cover. We can write
$$G=\CH\star K.$$
Of course $\CH\cap K$ is infinite, so this is not a direct sum. However, the
universal cover $\wh G$ of $G$ is a direct sum
$$\wh G=\wH\oplus \wh K,$$
whereas, if we let
$$\overline{G}= \wH \oplus K,$$
then we can define a surjective homomorphism $\overline{F}:\overline{G}\to G$ with
$\ker \overline{F}\simeq \Z (0, b)$.

We finally observe in this example that $\CH \cap K=S$ is not a compatible subgroup of $K$, which indicates the need for passing to $\CH_0$ in the proof of Theorem \ref{thm1} (see Claim \ref{compatible}).
\end{example}

\begin{example}
Pick any $0<u_2<b$ such that  $u_2$ and $b$ are $\bb{Z}$-independent. Let $L$ be the
subgroup of $\la M^2, +\ra$ which is generated by the two vectors $\la a, u_2\ra$
and $\la 0,b\ra$, and let again $G=\la [0, a)\times [0,  b), \star, 0\ra$ be the
group with
\[
x\star y = z\, \Lrarr\, x+y-z\in L.
\]
Here we observe that $H=\{0\}\times [0, b)$ itself is the largest strongly long locally
definable subgroup of $G$ and, hence, $G$ is itself an extension of a short
definable group by $H$. However, $H$ does not have a definable complement in $G$;
namely, $G$ cannot be written as a direct sum of $H$ with some definable subgroup of
it. The proof of this goes back to \cite{str}. See also \cite{PetSte}.

The universal cover $\wH$ of $H$ is again the subgroup of $M^2$ generated by $H$.
Let $\CK$ be the subgroup of $G$ generated by $K=[0, a)\times \{0\}$, and $\wh K$
its universal cover. Then we can write
$$G=H \star \CK, $$
 where again $H\cap \CK$ is not finite, so this is not a direct sum.  The universal cover $\wh G$ of $G$ is again a direct sum
$$\wh G= \wH \oplus \wh K.$$
If we let $\overline{K}=\la [0, a)\times \{0\}, \star_K, 0\ra$ be the group with
operation $\star_K =+\mod a$, then we can define a \emph{suitable} extension $\overline{G}$ of $\overline{K}$ by $\wH$
\[
\begin{diagram}
\node{0}\arrow{e}\node{\wH} \arrow{e}\node{\overline{G}}
\arrow{e}\node{\overline{K}} \arrow{e} \node{0}
\end{diagram}
\]
and a surjective homomorphism $\overline{F}:\overline{G}\to G$ with
$\ker \overline{F}\simeq \Z (0, b)$.
\end{example}

We finally give an example for Theorems \ref{thm1} and \ref{thm2} of a definable
group $G$ which contains no infinite proper definable subgroup.

\begin{example} Pick any $0< v_1< a$ such that $a$ and $v_1$ are $\bb{Z}$-independent, and any $0<u_2<b$
 such that  $u_2$ and $b$ are $\bb{Z}$-independent. Let $L$ be the subgroup of $\la M^2, +\ra$
 which is generated by the vectors $\la a, u_2\ra$ and $\la v_1, b\ra$. We define the group $G$ with domain
\[
\big([0, a)\times [0, b- u_2)\big) \cup \big([v_1, a)\times [b-u_2, b)\big),
\]
and group operation again
\[
x\star y = z\, \Lrarr\, x+y-z\in L.
\]
It is not too hard to verify that the above is indeed a definable  group - this will
appear in a subsequent paper (\cite{el-affine}).

In this case, $G$ does not contain any infinite proper definable subgroup. This
again originates in \cite{str}. We let $\CH$ the subgroup of $G$ generated by
$H=\{0\}\times [0, b- u_2)$, and $\wH$ its universal cover. We also let  $\CK$ be
the subgroup of $G$ generated by $K=[0, a)\times \{0\}$, and $\wh K$ its universal
cover. Then we have:
 $$G=\CH \star \CK,$$
 with $\CH\cap \CK$ infinite, and
$$\wh G= \wH \oplus \wh K.$$
Finally, if we let $\overline{K}=\la [0, a)\times \{0\}, \star_K, 0\ra$ be the group
with operation $\star_K =+\mod a$, then we can define a suitable extension $\overline{G}$ of $\overline{K}$ by $\wH$
\[
\begin{diagram}
\node{0}\arrow{e}\node{\wH} \arrow{e}\node{\overline{G}}
\arrow{e}\node{\overline{K}} \arrow{e} \node{0}
\end{diagram}
\]
and a surjective homomorphism $\overline{F}:\overline{G}\to G$ with
$\ker \overline{F}\simeq \Z (v_1, b)$.
\end{example}

\section{Compact Domination}
\label{CD-section} Let us first recall (\cite[Section 7]{HPP}) that for a definable,
or \Vdef group $\CU$, we write $\CU^{00}$ for the smallest, if such exists,
type-definable subgroup of $\CU$ of bounded index (in particular we require that
$\CU^{00}$ is contained in a definable subset of $\CU$). Note that a type-definable
subgroup $\CH$ of $\CU$ has bounded index if  and only if  there are no new
cosets of $\CH$ in $\CU$ in elementary extensions of $\CM$. A definable $X\sub \CU$
is called \emph{generic} if boundedly many translates of $X$ cover $\CU$. In
\cite[Theorems 2.9 and 3.9]{ep-defquot} we established conditions so that $\CU^{00}$
and generic sets exist.

Let $G$ be a
 definably connected, definably compact, abelian definable group and $\pi:G\to G/G^{00}$  the natural
projection. We equip the compact Lie group $G/G^{00}$ with the Haar measure, denoted
by $m(Z)$, and prove: for every definable $X\sub G$, the set of $h\in G/G^{00}$ for
which $\pi^{-1}(h)\cap X\neq \emptyset $ and $\pi^{-1}(h)\cap (G\setminus X) \neq
\emptyset$ has measure zero. As is pointed out in \cite{HPP}, it is sufficient to
prove that
\begin{equation}\label{eqn-defcomp}
\text{for every definable $X\sub G$, if $\dim X<\dim G$, then $m(\pi X)=0$.}
\end{equation}
We say then that $G$ (and $\pi$) \emph{satisfy Compact Domination}. When $G$ is
locally definable and $G^{00}$ exists then $G/G^{00}$ is locally compact (see
\cite[Lemma 7.5]{HPP}) and so admits Haar measure as well.  We still say that $G$
satisfies compact domination if (\ref{eqn-defcomp}) holds.

We split the argument into two cases:\vskip.2cm

\noindent {\bf  I. $G$ is abelian.}\vskip.2cm

Consider the universal covering map $\phi:\wh G\to G$ and the commutative diagram in
\cite[Proposition 3.8]{ep-defquot}
\begin{equation}\begin{diagram}
\node{\widehat G}\arrow{s,l}{\pi_{\widehat G}}\arrow{e,t}{\phi}\node{G}\arrow{s,r}{\pi_G}\\
\node{\wh G/\wh G^{00}}\arrow{e,t}{\phi'}\node{G/G^{00}}
\end{diagram}.\end{equation}
Using the fact that $\ker \phi$ has dimension zero and $ker \phi'$ is countable, it
is not hard to see that $G$ satisfies Compact Domination if and only if $\wh G$
does. Our goal is then to prove (\ref{eqn-defcomp}) for the universal cover $\wh G$.

Recall by Theorem \ref{thm1} the sequence:
$$
\begin{diagram}
\node{0}\arrow{e}\node{\widehat\CH} \arrow{e,t}{i}\node{\widehat G}
\arrow{e,t}{f}\node{\CU} \arrow{e} \node{0}
\end{diagram}
$$
with $\wH$ an open subgroup of $\la M^k,+\ra $, $\lgdim(\wH)=k=\lgdim(\wh G)$ and
$\CU$ a short \Vdef group of dimension $n$. Note that $\wh G$ contains a definable
generic set (any definable set which projects onto $G$), and hence so does $\CU$. By
\cite[Theorem 3.9]{ep-defquot}, $\CU$ has a definable, definably compact quotient
$K$, and the homomorphism from $\CU$ onto $K$ has kernel of dimension zero. By
\cite{HP}, the group $K$, with its map onto $K/K^{00}$ satisfies Compact Domination,
and therefore $\pi_{\CU}:\CU\to \CU/\CU^{00}$ also satisfies Compact Domination.

We now consider $\wH$ and first claim:
\begin{equation}\label{shortinH}
\text{$\wH^{00}$ exists and contains the set of all short elements in $M^k$.}
\end{equation}
Indeed, recall from  Section \ref{localH} that $\wH$ is generated by a subset $H'\sub M^k$,
 $$H'=(-e_1, e_1)\times \dots\times (-e_k, e_k),$$ with each
$e_i>0$ tall in $M$. We define, for each $n\in \bb{N}$, $H_i= \frac{1}{n} H'$, and
claim that
$$\wH^{00}=\bigcap_n H_n.$$
Indeed, $\bigcap_n H_n$ is a torsion-free subgroup of $\wH$. Moreover, each $H_n$ is
generic in $\wH$ because  we have $\wH=H_n+\Z e_1+\cdots +\Z e_k$. It follows that
$\bigcap_n H_n$ has bounded index in $\wH$,
 and thus \cite[Proposition 3.6]{ep-defquot} gives $\wH^{00}=\bigcap_n H_n.$
 Finally, since each $e_i$ is tall, it is easy to verify that each short tuple in $M^k$ must be contained in
 $\bigcap_n H_n$.

We now claim that $\wh G^{00}\cap i(\wH)=i(\wH^{00})$. This follows from the fact
that $\wh G^{00}\cap i(\wH)$ has bounded index in $i(\wH)$ and it is torsion-free
(\cite[Proposition 3.6]{ep-defquot}). Next, we claim that $f(\wh G^{00})=\CU^{00}$.
Since $f(\wh G^{00})$ has bounded index it must contain $\CU^{00}$. Because $\wh
G^{00}$ is torsion-free and $\ker(f)=i(\wH^{00})=i(\wH)\cap \wh G^{00}$ is divisible
(\cite[Proposition 3.5]{ep-defquot}), it follows that $f(\wh G^{00})$ is torsion-free so must
equal $\CU^{00}$. We therefore have the following commutative diagram of exact
sequences:

\begin{equation}\label{CDdiag}
\begin{diagram}
\node{0}\arrow{e}\node{\wH}\arrow{e,t}{i}\arrow{s,l}{\pi_{\wH}}\node{\wh
G}\arrow{s,l}{\pi_{\wh G}}
\arrow{e,t}{f}\node{\CU}\arrow{s,r}{\pi_{\CU}}\arrow{e}\node{1}\\
\node{0}\arrow{e}\node{\wH/\wH^{00}}\arrow{e,t}{\wh i}\node{\wh G/\wh
G^{00}}\arrow{e,t}{\wh f}\node{\CU/\CU^{00}}\arrow{e}\node{0}
\end{diagram}
\end{equation}
As in the proof of \cite[Proposition 3.8]{ep-defquot}, the map $\wh f$ is continuous.

Assume now that $X\sub \wh G$ is a definable set of dimension smaller than $\dim \wh
G$. We want to show that $\pi_{\wh G}(X)$ has measure $0$. We are going to use
several variations of Fubini's theorem so let us see that the setting is correct. By
\cite{ep-defquot}, the group $\wh G/\wh G^{00}$ is isomorphic to $\R^k\times \R^n$
and the bottom sequence in the above diagram is just
\begin{equation}
\begin{diagram}
\node{0}\arrow{e}\node{\R^k}\arrow{e,t}{\wh i}\node{\wh G/\wh G^{00}}\arrow{e,t}{\wh
f}\node{\R^n}\arrow{e}\node{0}
\end{diagram}
\end{equation}
The above sequence necessarily splits as a Lie group, so by Fubini, a set $Y\sub \wh
G/\wh G^{00}$ has measure zero if and only if the set
$$\{u\in \R^n: m_{\R^k}(\wh f^{-1}(u)\cap Y)>0\}$$ has measure zero in $\R^n$. (By $m_{\R^k}(\wh f^{-1}(u)\cap Y)$ we mean the measure after identifying  $\R^k\times \{u\}$
with $\R^k$)

We
are now ready to start the proof.
\\

\noindent{\bf Case 1}  $\dim f(X)<\dim \CU$.
\\

Here we use Compact Domination in expansions of real closed fields (see \cite{HP}),
so by an earlier observation, $\CU$ also satisfies it. Hence, we have
$m(\pi_{\CU}(f(X)))=0$, and therefore, by the commutation of the above diagram and
Fubini we must have $m(\pi_{\wh G}(X))=0$.
\\

Most of the work goes towards the proof of the second case. For simplicity, let us
assume that $\wH\sub \wh G$.
\\

 \noindent{\bf Case 2}  $\dim
f(X)=\dim \CU$.
\\

We first establish two preliminary results.
\\

 \noindent{\bf Claim} We may assume that
$\lgdim(X)<k=\lgdim(\wh G)$.
\\

Indeed, by Lemma \ref{def-long}, we can decompose $f(X)$ into two definable sets
$Y_1\cup Y_2$ such that for every $u\in Y_1$, we have $\lgdim(f^{-1}(u)\cap X)<k$
and for every $u\in Y_2$, $\lgdim(f^{-1}(u)\cap X)=k=\dim(f^{-1}(u))$. Because $\dim
X<\dim \wh G$ and $\dim f(X)=\dim \CU$, the dimension of $Y_2$ must be smaller than
$\dim \CU$. By Case (1), we can ignore $Y_2$ and assume now that for every $u\in
f(X)$, $\lgdim(f^{-1}(u)\cap X)<k$. Since $\CU$ is short, it follows from
\cite{el-sbd} that $\lgdim(X)<k$.
\\

In the rest of the argument we prove the more general statement:
\\

\begin{lemma}\label{CD-short} If $X\sub \wh G$ is definable and $lgdim(X)<k$ then the measure of $\pi_{\wh
G}(X)$ is zero.
\end{lemma}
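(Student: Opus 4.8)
The plan is to use the Claim we just proved (that $\lgdim(X)<k$) together with the structure of the diagram (\ref{CDdiag}) and a Fubini-type argument relative to the splitting $\wh G/\wh G^{00}\cong \R^k\times\R^n$. The key point to exploit is that $\wH^{00}$ contains all short tuples of $M^k$ (statement (\ref{shortinH})), so that the fibers of $\pi_{\wh G}$ are, along the $\wH$-direction, ``saturated'' by short elements; this is exactly what should force a small-long-dimension set to project to measure zero. First I would reduce, by the decomposition results of \cite{el-sbd} (long cones) and by additivity of Haar measure, to the case where $X$ is itself a single long cone $C=B+\la C\ra$ with $B$ a short cell and $\la C\ra$ spanned by $\ell<k$ independent linear maps; indeed every definable $X$ of long dimension $<k$ is a finite union of such cones together with sets of strictly smaller dimension, so it suffices to handle one cone.

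The heart of the argument is then to analyze $\pi_{\wh G}(C)$ inside $\wh G/\wh G^{00}$. I would project $C$ first to $\CU$ via $f$, and apply the already-established Compact Domination for $\pi_{\CU}:\CU\to\CU/\CU^{00}$: since $\CU$ is short, the image $f(C)$ is short, and I want to say that $\pi_{\CU}(f(C))$ is either measure zero (in which case Fubini on the bottom row of (\ref{CDdiag}), exactly as in Case 1, finishes the proof) or else has full dimension $n$, in which case I must control the $\R^k$-fibers. In that remaining situation, for a generic $u\in\CU/\CU^{00}$ I would look at $\wh f^{-1}(u)\cap\pi_{\wh G}(C)$ inside $\R^k=\wH/\wH^{00}$, and show it has $m_{\R^k}$-measure zero. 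This is where (\ref{shortinH}) enters: the fiber of $\pi_{\wh G}$ over a point of $\wh G/\wh G^{00}$ is a coset of $\wh G^{00}$, and intersected with $i(\wH)$ it is a coset of $i(\wH^{00})$, which contains every short tuple; so along the $\wH$-direction the image of $C$ in $\R^k$ is governed only by the ``long part'' $\la C\ra$, whose span has dimension $\ell<k$. Hence the relevant image sits inside the image of an $\ell$-dimensional linear subspace of $\R^k$, which is a proper subspace and thus Lebesgue-null in $\R^k$.

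More precisely, I would make the last point rigorous as follows. Write $\wH=H'$-generated with $H'=\prod(-e_i,e_i)$ and $\wH^{00}=\bigcap_n\frac1nH'$; then $\wH/\wH^{00}$ is the quotient of $\la M^k,+\ra$ by the convex/short part, and the induced map sends a long cone's linear span $\la C\ra$ onto an $\ell$-dimensional $\R$-subspace of $\R^k$ (the short cell $B$ and all short translations collapse). Combining with the contribution from the $\CU$-direction via the exact sequence, the image $\pi_{\wh G}(C)$ is contained in the preimage under $\wh f$ of $\pi_{\CU}(f(C))$ intersected, fiberwise, with translates of a fixed $\ell$-dimensional subspace of $\R^k$; since $\ell<k$, each such fiber slice is null, so by the Fubini criterion recorded just before the lemma, $\pi_{\wh G}(C)$ is null. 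Summing over the finitely many cones gives the result for $X$.

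The main obstacle I anticipate is the bookkeeping in the middle step: verifying cleanly that the image of a long cone in $\wh G/\wh G^{00}$ really does factor, fiber-by-fiber over $\CU/\CU^{00}$, through a fixed proper linear subspace of $\R^k$ — in other words, that the long-dimension bound $\ell<k$ on $X$ transfers to a genuine dimension bound on the $\R^k$-slices after quotienting by $\wh G^{00}$. This requires using (\ref{shortinH}) to absorb all the short data (the base $B$ of the cone and any short ambiguity in the linear coordinates) into $\wH^{00}$, and using the exactness of the bottom row of (\ref{CDdiag}) to disentangle the $\R^k$- and $\R^n$-directions; once that is set up, the measure-zero conclusion is the elementary fact that a proper linear subspace of $\R^k$ is Lebesgue-null, fed into the Fubini criterion already stated.
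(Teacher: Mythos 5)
Your overall strategy matches the paper's: Fubini over the splitting $\wh G/\wh G^{00}\cong\R^k\times\R^n$, collapse of all short data into $\wH^{00}$ via (\ref{shortinH}), and the observation that a long cone's linear span maps onto an affine subset of $\R^k$ of dimension $<k$, hence null. But the step you defer as ``bookkeeping'' is where the real content lies, and as written there is a genuine gap. The cone-to-affine-subspace argument is only valid for definable subsets of $\wH$ itself, where the group operation coincides with the ambient $+$ of $M^k$ (this is the paper's Claim \ref{CD-H}); decomposing $X\sub\wh G$ into long cones in the ambient space tells you nothing directly about $\pi_{\wh G}(X)$, because the group operation of $\wh G$ bears no a priori relation to the ambient affine structure. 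To compare the fiber of $X$ over $u\in\CU/\CU^{00}$ --- which is the part of $X$ lying over the entire coset $\bar u+\CU^{00}$, i.e.\ over boundedly many but typically uncountably many cosets of $i(\wH)$ --- with a subset of $\wH$, you must subtract a basepoint in each coset, and these basepoints must form a \emph{definable} family lying in a \emph{single} coset of $\wh G^{00}$. Otherwise you are left with an unbounded-measure union: a union of null sets over an uncountable index set need not be null, so it is essential that the whole fiber be covered by one translate of $\pi_{\wH}(Y)$ for a single definable $Y$ with $\lgdim(Y)<k$.

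The paper supplies exactly this missing ingredient: a definable, continuous section $s:U_0\to\wh G$ of $f$ over a definable neighborhood $U_0\supseteq\CU^{00}$, with the crucial extra property $s(\CU^{00})\sub\wh G^{00}$ (proved by showing the cocycle $\sigma(x,y)=s(x-y)-(s(x)-s(y))$ takes values in a short connected subset of $\wH$, hence in $\wH^{00}$, so that $s(\CU^{00})+\wH^{00}$ is a torsion-free, type-definable, bounded-index subgroup and therefore equals $\wh G^{00}$). With this section one sets $Y=\{x-s(f(x)):x\in X\cap f^{-1}(U_0)\}\sub\wH$, notes $\lgdim(Y)<k$, and gets $\pi_{\wh G}(X)\cap\wh f^{-1}(u)\sub\pi_{\wH}(Y)+\pi_{\wh G}(s(\bar u))$; only then does the long-cone computation in $\R^k$ (your ``proper subspace is Lebesgue-null'' step) apply. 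Your proposal would become correct if you inserted this section construction and moved the cone decomposition from $X$ to the set $Y\sub\wH$.
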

\proof
 We first prove a
result for the group $\wH$. By \cite[Proposition 3.8]{ep-defquot}, the group
$\wH/\wH^{00}$, equipped with the logic topology, is isomorphic to $\R^k$.

\begin{claim}\label{CD-H} If $Y\sub \wH$ is definable and $\lgdim(Y)<k$ then
$m(\pi_{\wH}(Y))=0$.
\end{claim}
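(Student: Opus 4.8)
The plan is to reduce, via the long-cone decomposition of \cite{el-sbd}, to the case of a single long cone $C$ of long dimension $<k$, and then to show that $\pi_{\wH}$ collapses the short base of $C$ to a finite set and maps the long, at most $(k-1)$-dimensional fibre of $C$ into a Lebesgue-null subset of $\R^k\cong\wH/\wH^{00}$.

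First observe that every definable subset of the locally definable group $\wH$ is bounded: its projection to the $i$-th coordinate is a definable subset of $\wH_i=\la(-e_i,e_i)\ra$, and by saturation any definable subset of $\wH_i$ lies inside some $(-ne_i,ne_i)$. In particular $Y$ is bounded, so by the decomposition of \cite{el-sbd} we may write $Y=C_1\cup\dots\cup C_r$ with each $C_s\subseteq Y$ a long cone; since $\lgdim(Y)<k$, each $C_s$ is a $j_s$-long cone with $j_s<k$, and being contained in $\wH$ it is bounded, hence can be normalized as $C_s=B_s+\la C_s\ra$ with $B_s$ a short cell and $\la C_s\ra=\{\sum_{i=1}^{j_s}\lambda_i(t_i):t_i\in(-a_i/2,a_i/2)\}$, the $\lambda_i$ being $M$-independent partial linear maps whose images lie in $\wH$. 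It suffices to prove $m(\pi_{\wH}(C))=0$ for one such cone $C=B+\la C\ra$.

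Recall from the proof of \eqref{shortinH} that $\wH=\prod_{i=1}^k\wH_i$, that $\wH^{00}=\prod_{i=1}^k\wH_i^{00}$ with each $\wH_i^{00}=\bigcap_n\tfrac1n(-e_i,e_i)$ a convex subgroup of $\wH_i$, and hence that $\pi_{\wH}=\prod_i\pi_{\wH_i}$ identifies $\wH/\wH^{00}$ with $\prod_i\wH_i/\wH_i^{00}\cong\R^k$, each $\pi_{\wH_i}$ being an order-preserving homomorphism (normalize $\pi_{\wH_i}(e_i)=1$). Since $\pi_{\wH}$ is a homomorphism, $\pi_{\wH}(C)=\pi_{\wH}(B)+\pi_{\wH}(\la C\ra)$ (Minkowski sum in $\R^k$), and a finite union of translates of a null set is null; so it is enough to show (a) $\pi_{\wH}(B)$ is finite and (b) $\pi_{\wH}(\la C\ra)$ is null. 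For (a): $B$ is a finite union of short cells; a short cell is definably connected, so its projection to each coordinate is a definably connected short subset of $M$, i.e.\ an interval of short length (or a point). Hence a short cell is contained in a box $c+(-s,s)^k$ with $s\in M$ short, and since every tuple in such a box is a short tuple, \eqref{shortinH} gives $(-s,s)^k\subseteq\wH^{00}$; thus each short cell lies in a single coset of $\wH^{00}$, and $\pi_{\wH}(B)$ is finite.

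For (b), let $N=\la\lambda_1\ra+\dots+\la\lambda_j\ra\leq\wH$ be the locally definable subgroup generated by the images of the $\lambda_i$; then $\la C\ra\subseteq N$ and $\dim N\le j<k$. Write $N=\bigcup_m N_m$ as a countable increasing union of bounded definable sets of dimension $\le j$. On any bounded box $\pi_{\wH}$ is uniformly continuous --- indeed $\pi_{\wH}(x)-\pi_{\wH}(y)=\pi_{\wH}(x-y)$, and $x-y\in\tfrac1n(-e_1,e_1)\times\dots\times\tfrac1n(-e_k,e_k)$ forces $\pi_{\wH}(x-y)\in(-\tfrac1n,\tfrac1n)^k$ --- so each $\pi_{\wH}(N_m)$ is a uniformly continuous image of a definable set of dimension $<k$, hence has Hausdorff dimension $<k$ and Lebesgue measure $0$; therefore $\pi_{\wH}(\la C\ra)\subseteq\pi_{\wH}(N)=\bigcup_m\pi_{\wH}(N_m)$ is null. (More explicitly, for each $i$ the maps $t\mapsto\pi_{\wH_\ell}(\lambda_i(t)_\ell)$ are additive and monotone in $t$, hence linear in the appropriate standard part of $t$ by a Cauchy-type argument, so $\pi_{\wH}(\la C\ra)\subseteq\sum_{i=1}^{j}\R w_i$ for fixed $w_i\in\R^k$ --- a subspace of dimension $\le j<k$.) Combining (a) and (b), $\pi_{\wH}(C)$ is null, and summing over $C_1,\dots,C_r$ gives $m(\pi_{\wH}(Y))=0$. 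The crux of the argument is (b): controlling the image under the non-definable map $\pi_{\wH}$ of a lower-dimensional semi-linear subgroup; this relies on the product description $\wH/\wH^{00}\cong\R^k$ already established together with the fact that Lipschitz (uniformly continuous) maps do not raise Hausdorff dimension, while (a) rests on the observation --- implicit in \eqref{shortinH} --- that short sets are infinitesimal relative to the scales $e_1,\dots,e_k$ and hence are collapsed by $\pi_{\wH}$.
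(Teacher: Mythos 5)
Your reduction to a single bounded long cone $C=B+\la C\ra$ and your step (a) — that $\pi_{\wH}$ collapses the short base to finitely many points because short tuples lie in $\wH^{00}$ — both match the paper's proof. The gap is in your main argument for (b). The inference ``each $\pi_{\wH}(N_m)$ is a uniformly continuous image of a definable set of dimension $<k$, hence has Hausdorff dimension $<k$ and Lebesgue measure $0$'' is not valid. Uniform continuity does not control Hausdorff dimension (space-filling curves are uniformly continuous; the Cantor function maps a null set onto $[0,1]$); only a Lipschitz-type modulus at the relevant scales would do, and the modulus you exhibit ($x-y\in\frac1n H'$ forces $\pi_{\wH}(x-y)\in(-\frac1n,\frac1n)^k$) lives at the nonstandard scale $e_i/n$ and yields no covering-number estimate for $\pi_{\wH}(N_m)$ at real scales. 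Moreover, the o-minimal dimension of $N_m$ inside $M^k$ is not a metric dimension, and there is no general principle that a $j$-dimensional definable set has null image under the non-definable map $\pi_{\wH}$: if such a principle held, Compact Domination itself would be immediate by applying it to any definable $X$ with $\dim X<\dim G$, which is precisely the nontrivial content of Theorem \ref{thm-cdom}.

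What rescues the argument is the additive structure, and that is exactly your parenthetical remark, which is the paper's actual proof and should be promoted to the main argument: since $\pi_{\wH}$ is a homomorphism onto $\la\R^k,+\ra$ and each $\lambda_i$ is partial linear, the map $t\mapsto\pi_{\wH}(\lambda_i(t))$ is a monotone partial homomorphism from the interval $I_i$ into $\R^k$; a Cauchy-type argument shows each coordinate equals $\sigma(t)\cdot c_\ell$ where $\sigma$ is a standard-part map determined only by the ordering of $I_i$ (hence the \emph{same} $\sigma$ for all coordinates $\ell$ — this is the point to check, as it is what confines the image of a single $\lambda_i$-curve to one line $\R w_i$ rather than a product of lines). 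Then $\pi_{\wH}(\la C\ra)\sub\sum_{i}\R w_i$, a subspace of dimension $<k$, hence null, and combined with your (a) this gives $m(\pi_{\wH}(Y))=0$. Drop the Hausdorff-dimension route entirely.
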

\proof Recall that  $\wH$ is a subgroup of  $\la M^k,+\ra$ and that the set of all
short elements of $M^k$ is contained in $\wH^{00}$. Hence, if $B$ is any definably
connected short set, then $\pi_{\wH}(B)=\{b\}$ is a singleton.

The set $Y$ is a finite union of $m$-long cones, with $m<k$, hence we may assume
that $Y$ is such a cone  $C=B+\la C\ra$, where $\la C\ra=\left\{\sum_{i=1}^k
\lambda_i(t_i) : t_i \in I_i \right\}$, for long $I_i=(-a_i,a_i)$ and partial linear
maps $\lambda_i:I_i\to M^k$. We have
$$\pi_{\wH}(C)=b+\sum_{i=1}^m \pi_{\wH}(\lambda_i(t_i)).$$

Because $\pi_{\wH}$ is a homomorphism from $\la\wH,+\ra$ onto $\la \R^k,+\ra$, it
follows that for each $i=1,\ldots, m$, $t_i\mapsto \pi_{\wH}(\lambda_i(t_i))$ is a
partial homomorphism from $I_i$ into $\la \R^k,+\ra$. Hence, the image of the $\wh
G$-linear set  $\{\lambda_i(t):t\in I_i\}$ is a closed affine subset of $\R^k$ of
dimension $m$. Since $m<k$ we have $m(\pi_{\wH}(Y))=m(\pi_{\wH}(C))=0$.\qed
\\

\begin{claim}\label{section}
There exists a definable set $U_0\sub \CU$ with $\CU^{00}\sub U_0$, and a definable
section $s:U_0\to \wh G$ (i.e. $f s(u)=u$ for every $u\in U_0$), such that (i) the
function $s$ is continuous with respect to the topologies induced by $\CU$ and $\wh
G$ and (ii) $s(\CU^{00})\sub \wh G^{00}$.
\end{claim}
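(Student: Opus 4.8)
The plan is to build $s$ in three stages: first produce a definable (not yet continuous) section over a definable set $U_0\supseteq\CU^{00}$; then upgrade it to a continuous one; and finally read off~(ii) from the behaviour at the identity.

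\emph{A definable section.} Write $\wh G=\bigcup_n W_n$ as an increasing union of definable sets. Since $f$ is locally definable and surjective, each $f(W_n)$ is definable and $\CU=\bigcup_n f(W_n)$, so if $P\subseteq\CU$ is any definable set containing the type-definable group $\CU^{00}$, saturation gives $P\subseteq f(W_m)$ for some~$m$. Put $U_0:=P$ (we may take it definably connected with $0\in U_0$) and $Z:=W_m$, so that $f$ restricted to $Z$ is a definable surjection onto a definable set containing $U_0$; strong definable choice then yields a definable section $s_0\colon U_0\to Z\subseteq\wh G$ of $f$, and, replacing $s_0$ by $s_0-s_0(0)$ (note $s_0(0)\in\ker f=i(\wH)$), we may assume $s_0(0)=0$.

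\emph{Continuity --- the main obstacle.} Here I would work inside the o-minimal field structure attached to the short group $\CU$: by the third item of the remark following Theorem~\ref{thm1}, $\CU$ is locally definable in an o-minimal expansion $\CN$ of a real closed field, where triangulation and the usual definable homotopy arguments are available. Recall from Section~\ref{proof-thm1} that $\wh G$ is the pullback of $\CU$ and $\wh D$ over $\CK$ (via $\mu$ and $\pi_{\wh D}$) and that $\wh D$ is the pushout of $\wH$ and $\CB$ over $\CH_0\cap\CB$, together with a locally definable homomorphism $\gamma''\colon\CB\to\wh D$ satisfying $\pi_{\wh D}\gamma''=\pi_\CB$. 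Hence a continuous definable section of the locally definable quotient map $\pi_\CB\colon\CB\to\CK$ over the definable short set $\mu(U_0)$ yields, on composing with $\gamma''$ and with $\mu$ restricted to $U_0$, a continuous definable section of $f$ over $U_0$. As $\CB$ and $\CK$ are short, the quotient $\pi_\CB$ lives in $\CN$; one triangulates $\mu(U_0)$ and builds the section inductively over the skeleta, the extension over each new cell being unobstructed because the fibre $\CH_0\cap\CB$ is connected, torsion-free and divisible, hence definably contractible. Arranging the identity of $\CK$ to be a vertex and assigning it the value $0$, one obtains a continuous definable section $s_1\colon U_0\to\wh G$ with $s_1(0)=0$. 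The delicate part of the whole argument is exactly this gluing of the cellwise sections while remaining definable; producing a merely definable section, and deriving~(ii), are comparatively routine.

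\emph{Condition (ii).} Since $U_0\subseteq\CU$ and $\CU$ is short, $U_0$ is short, and therefore so is its image $s_1(U_0)$ under the definable map $s_1$; moreover $s_1(U_0)$ is definably connected and contains $0=s_1(0)$. Arguing as in the proof of Claim~\ref{CD-H} --- using (\ref{shortinH}) and the analogous fact that $\wh G^{00}$ contains every short tuple lying in $\wh G$ --- the quotient map $\wh G\to\wh G/\wh G^{00}$ is constant on the definably connected short set $s_1(U_0)$, so its value there is $0$; that is, $s_1(U_0)\subseteq\wh G^{00}$, whence in particular $s_1(\CU^{00})\subseteq\wh G^{00}$. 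Taking $s:=s_1$ and $U_0$ as above then gives all the required properties. (One can also verify this last point by checking, from the commutativity of (\ref{CDdiag}), that the composite $\CU^{00}\to\wh G\to\wh G/\wh G^{00}$ lands in $\ker\wh f\cong\wH/\wH^{00}$ and then applying the same shortness argument there.)
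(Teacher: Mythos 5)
There are two genuine gaps, one in each of the hard steps.

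\textbf{Continuity.} Your triangulation/obstruction-theory construction of a continuous section of $\pi_\CB$ is not justified. The fibre $\CH_0\cap\CB$ is a locally definable, non-definable group (a bounded union of short sets), no local triviality of $\pi_\CB$ over the cells is established, and ``definably contractible'' is not an available fact for such a group in this setting; extending a section continuously and definably over each new cell is exactly the point that needs proof, and nothing in the paper or its references supplies it. The paper's route is much softer: definable choice gives a section over a definable generic $U_1\sub\CU$ which, like any definable map, is continuous off a lower-dimensional set; one shrinks to a definably connected $U_0\sub U_1$ on which $s$ is continuous and which is still generic, and then invokes Compact Domination for $\CU$ together with \cite[Claim 3, p.\ 590]{HPP} to conclude that $U_0$ contains a \emph{coset} of $\CU^{00}$; a translation finishes the job. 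Your first stage fixes $U_0$ in advance as an arbitrary definable set containing $\CU^{00}$, and there is no reason a section obtained by definable choice can be made continuous on a prescribed such set.

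\textbf{Condition (ii).} The shortness argument is false as stated. The fact (\ref{shortinH}) that the $00$-subgroup absorbs all short elements holds for the \emph{strongly long} group $\wH$ (because the generators $e_i$ are tall); it fails for $\wh G$, which has the short group $\CU$ as a quotient. Indeed, if $\wh G^{00}$ contained the definably connected short set $s_1(U_0)\ni 0$, then applying $f$ and using $f(\wh G^{00})=\CU^{00}$ would give $U_0\sub\CU^{00}$, which is absurd since $U_0$ is a definable set properly containing the type-definable group $\CU^{00}$ (note $\CU$ itself is short, connected, contains $0$, and is certainly not inside $\CU^{00}$). The parenthetical variant only gets you $\pi_{\wh G}(s(\CU^{00}))\sub\ker\wh f$, not that this image is trivial. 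The paper's actual argument is the missing idea: introduce the cocycle $\sigma(x,y)=s(x-y)-(s(x)-s(y))$, observe that its image is a short, definably connected subset of $\wH$ containing $0$ and hence lies in $\wH^{00}$ (here the shortness argument \emph{is} valid, because the target is $\wH$), deduce that $s(\CU^{00})+\wH^{00}$ is a type-definable, bounded-index, torsion-free subgroup of $\wh G$, and conclude by \cite[Proposition 3.6]{ep-defquot} that it equals $\wh G^{00}$.
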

\proof Let $U_1\sub \CU$ be a definable generic set. By definable choice. there
exists a definable partial section $s:U_1\to \wh G$, namely, $sf(u)=u$ for all $u\in
U_1$. The map $s$ is piecewise continuous (with respect to the $\tau$-topologies of
 $\CU$ and $\hat G$) and therefore $U_1$ has a definable, definably connected $U_0\sub U_1$, still
generic in $\wh G$ such that $s$ is continuous on $U_0$. Using Compact Domination
for $\CU$, it follows from \cite[Claim 3, p.590]{HPP} that the set $U_0$ contains a
coset of $\CU^{00}$ so we may assume after translation in $U$ that $U_0$ contains
$\CU^{00}$ and $s:U_0\to \wh G $ is continuous. We may also assume that $s(0)=0$.

It is left to see that $s(\CU^{00})$ is contained in $\wh G^{00}$.  Consider the map
$\sigma(x,y)=s(x-y)-(s(x)-s(y))$,  a definable and continuous map from $U_0\times
U_0$ into $\wH$. Because the group topology on $\wH$ is the subspace topology of
$M^k$ and because $U_0\times U_0$ is a short definably connected set its image under
$\sigma$ is a short, definably connected subset of $\wH$ containing $0$. As we
pointed out earlier, it must therefore be contained in $\wH^{00}$.

We consider the set $\wh G_1=s(\CU^{00})+\wH^{00}$ and claim that $\wh G_1=\wh
G^{00}$. To see first that $\wh G_1$ is a subgroup, we note that
$$(s(u_1)+h_1)-(s(u_2)+h_2)= s(u_1-u_2)+(h_1+h_2-\sigma(u_1,u_2)).$$ When
$u_1,u_2\in \CU^{00}$ we have $\sigma(u_1,u_2)\in \wH^{00}$ and hence this sum is
also in $\wh G_1$.
 Because $s$ is definable and both $\CU^{00}$ and $\wH^{00}$ are type-definable the
group $\wh G_1$ is also type-definable. Because $\CU^{00}$ has bounded index in
$\CU$ and $\wH^{00}$ has bounded index in $\wH$ it follows that $\wh G_1$ has
bounded index in $\wh G$.
Since $\wh G_1$ is torsion-fee it follows from \cite[Proposition 3.6]{ep-defquot}
that $\wh G_1=\wh G^{00}$. In particular, $s(\CU^{00})$ is contained in $\wh
G^{00}$. \qed

Our goal is to show that $m(\pi_{\wh G}(X))=0$. By Fubini, it is sufficient to show
that for every $u\in \CU/\CU^{00}$, the fiber $\pi_{\wh G}(X)\cap \wh f^{-1}(u)$ has
zero measure in the sense of $\wH/\wH^{00}$. Namely, it is the translate in $\wh
G/\wh G^{00}$ of a zero measure subset of $\wH/\wH^{00}$.
\\

\noindent {\bf Claim } Fix $u\in \CU/\CU^{00}$. Then there exists a a definable set
$Y\sub \wH$ with $\lgdim(Y)<k$, and an element $g\in \wh G$ such that the fiber
$\pi_{\wh G}(X)\cap \wh f^{-1}(u)$ is contained in the set
$$\pi_{\wh H}(Y)+\pi_{\wh G}(g).$$

\noindent {\em Proof.} Fix $\bar u\in \CU$ such that $\pi_{\CU}(\bar u)=u$. By
translation in $\wh G$ and in $\CU$ we may assume that the domain of the partial
section $s$ which was defined above, call it still $U_0$,  contains $\bar
u+\CU^{00}$. If we let $g=s(\bar u)$ then $s(\bar u+\CU^{00})\sub g+\wh G^{00}$.

Consider the definable map $x\mapsto x-s(f(x))$ from $X\cap f^{-1}(U_0)$ into $\wH$
and let $Y$ be its image. Because $\lgdim(X)<k$, we must also have $\lgdim(Y)<k$.
We claim that this is the desired $Y$. Indeed, we assume that $\wh f(\pi_{\wh
G}(x))=u$ for some $x\in X$ and show that $\pi_{\wh G}(x)\in \pi_{\wh H}(Y)+\pi_{\wh
G}(g)$.

By the commuting diagram above, $f(x)\in \bar u+\CU^{00}\sub U_0$ and therefore
$x-s(f(x))\in Y$. Since $s(\bar u+\CU^{00})$ is contained in $s(\bar u)+\wh G^{00}$,
we also have $s(f(x))\in g+\wh G^{00}$. We now have
$$\pi_{\wh G}(x)=\pi_{\wh G}(x-sf(x))+\pi_{\wh G}(sf(x))\in \pi_{\wh H}(Y)+\pi_{\wh
G}(g).$$\qed

We can now complete the proof that $\pi_{\wh G}(X)$ has measure $0$. For every $u\in
\CU/\CU^{00}$ we find a definable $Y\sub \wH$ as above. By Claim \ref{CD-H}, the set
$\pi_{\wH}(Y)$ has measure $0$ in $\wH/\wH^{00}$, hence the fiber $\pi_{\wh
G}(X)\cap \wh f^{-1}(u)$ is a translate of a measure zero subset of $\wH/\wH^{00}$.
By Fubini the measure of $\pi_{\wh G}(X)$ is zero.

This  ends the proof of Lemma \ref{CD-short} and with it that of Compact Domination for
abelian $G$. \vskip.2cm

\noindent {\bf II. The general case ($G$ not necessarily abelian).}\vskip.2cm

Assume now that $G$ is an arbitrary definably compact group. By \cite{HPP2}, $G$ is
the almost direct product of a definably connected abelian group $G_0$ and a
definable semi-simple group $S$. It is enough to prove the result for a finite cover
of $G$ hence we may assume that $G=G_0\times S$. By \cite[Theorem 4.4 (ii)]{HPP2},
the group $S$ is definably isomorphic to a semialgebraic group over a definable real
closed field so it must be short, and therefore $\lgdim G=\lgdim G_0=k$. To simplify
the diagram, we use $\overline{G_0}=G_0/G_0^{00}$, $\overline{S}=S/S^{00}$, so we
have $G/G^{00}=\overline{G_0}\times \overline{S}$.

We have
$$
\begin{diagram}
\node{0}\arrow{e}\node{G_0}\arrow{e,t}{i}\arrow{s,l}{\pi_{G_0}}\node{G_0\times
S}\arrow{s,l}{\pi_{G}}
\arrow{e,t}{f}\node{S}\arrow{s,r}{\pi_S}\arrow{e}\node{0}\\
\node{0}\arrow{e}\node{\overline{G_0}}\arrow{e,t}{\wh i}\node{\overline{G_0}\times
\overline{S}}\arrow{e,t}{\wh f}\node{\overline{S}}\arrow{e}\node{0}
\end{diagram}
$$

 Assume now that
$X\sub G$ is a definable set and $\dim(X)<\dim(G)$. If $\dim(f(X))<\dim(S)$ then by
Compact Domination in expansions of fields, the Haar measure of $\pi_S(f(X))$ in
$\overline{S}$ is $0$ and therefore $m(\pi_G(X))$ in $G/G^{00}$ is $0$.

If $\dim(X)=\dim(S)$ then, as in the abelian case, we may assume, after partition,
that for every $s\in S$, $\lgdim(f^{-1}(s)\cap X)<k$. Because $S$ is short, it
follows that $\lgdim(X)<k$ and therefore the projection of $X$ into $G_0$, call it
$X'$, has long dimension smaller than $k$. But now, by Lemma \ref{CD-short}, the
Haar measure in $\overline{G_0}$ of $\pi_{G_0}(X')$ equals to $0$. By Fubini,  the
Haar measure of $\pi_G(X)$ must also be zero.

This ends the proof of Compact Domination for definably compact groups in o-minimal
expansions of ordered groups.\qed

\section{Appendix A - pullback and pushout}

\subsection{Pushout}
\proof[Proof of Proposition \ref{pushout1}]

We start with
$$\begin{diagram}
\node{A}\arrow{e,t}{\al}\arrow{s,l}{\beta}\node{B}\\
\node{C}
\end{diagram}$$
and prove the existence of the pushout $D$.
 We first review the standard construction of $D$ (without verifying the algebraic facts).
 We consider the direct product $B\times C$ and take $D=(B\times C)/H$ where
$H$ is the subgroup $H=\{(\alpha(a),-\beta(a)):a\in A\}$. If we denote by $[b,c]$
the coset of $(b,c)$ mod $H$ then the maps $\gamma,\delta$ are defined by
$\gamma(b)=[b,0]$ and $\delta(c)=[0,c]$. Assume now that we also have
$$\begin{diagram}
\node{A}\arrow{e,t}{\al}\arrow{s,l}{\beta}\node{B}\arrow{s,r}{\gamma'}\\
\node{C}\arrow{e,t}{\delta'}\node{D'}
\end{diagram}$$ We define $\phi:D\to D'$ by
$\phi([b,c])=\gamma'(b)+\delta'(c)$. Clearly, if all data are definable then so are
$B\times C$ and $H$,  and therefore, using definable choice, $D$ and the associated
maps are definable.

 If $\al$ is injective then $\delta$ is also injective, and if $\beta$ is surjective then so is $\gamma$ (see observation (b) on p.
53 in \cite{Fuchs})

Suppose that $A,B,C$ and $\al,\beta$ are $\bigvee$-definable and that $\al(A)$ is
compatible subgroup of $B$. Clearly $B\times C$ is $\bigvee$-definable and it is
easy to see that $H$ is a $\bigvee$-definable subgroup. We want to show that $H$ is
a compatible subgroup of $B\times C$. For that we write $A=\bigcup A_i$, $B=\bigcup
B_j$ and $C=\bigcup C_k$. It follows that $B\times C=\bigcup_{j,k}B_j\times C_k$. To
show compatibility of $H$ it is enough to show that for every $j,k$, the
intersection $(B_j\times C_k)\cap H$ is definable. Because $\alpha(A)$ is compatible
in $B$, the set $B_j\cap \al(A)$ is definable. Hence, there is some $i_0$ such that
$\al(A_{i_0})\supseteq B_j\cap \al(A)$. Moreover, because $\al$ is injective
$\al^{-1}(B_j)\subset A_{i_0}$.
 It follows that the
intersection $H\cap (B_j\times C_k)$ equals
$$ \{(\al(a), -\beta(a))\in B_j\times C_k:a\in A\}=\{(\al(a),-\beta(a))\in B_j\times
C_k:a\in A_{i_0}\}.$$
 The set on the right is clearly definable, hence $H$ is a
compatible subgroup of $B\times C$, so $D=(B\times C)/H$ is $\bigvee$-definable (see
Fact \ref{edmundo}). It is now easy to check that $\gamma:B\to D $ and $\delta: C\to D$
are $\bigvee$-definable.

If $E=B/\al(A)$ then, by the compatibility of $\al(A)$, we see that $E$ is
$\bigvee$-definable. If $\pi:B\to E$ is the projection then we define $\pi':D\to E$
by $\pi'([b,c])=\pi(b)$. It is routine to verify that $\pi'$ is a well-defined
surjective homomorphism whose kernel is $\delta(C)$. It follows, using Fact
\ref{edmundo}, that $\delta(C)$ is a compatible subgroup of $D$. Finally, it is
routine to verify commutation of all maps.\qed

\proof[Proof of Lemma \ref{pushout2}] We have \begin{equation}
\begin{diagram}
\node{B}\arrow{e,t}{\gamma} \node{D}\arrow{e,t}{\mu}\node{F}\\
 \node{A}\arrow{n,l}{\alpha}\arrow{e,t}{\beta}\node{C}\arrow{n,r}{\delta}\arrow{e,t}{\eta}\node{E}\arrow{n,r}{\xi}
\end{diagram}
\end{equation}
with $D$ the pushout of $B$ and $C$ over $A$ and $F$ the pushout of $B$ and $E$ over
$A$ and we want to see that $F$ is also the pushout of $D$ and $E$ over $C$.

It is sufficient to show that for every given commutative diagram
\begin{equation} \begin{diagram}
 \node{D}\arrow{e,t}{\mu'}\node{F'}\\
\node{C}\arrow{n,r}{\delta}\arrow{e,t}{\eta}\node{E}\arrow{n,r}{\xi'}
\end{diagram}
\end{equation}
there is a map $\phi':F\to F'$ such that $\phi'\mu=\mu'$ and $\phi' \xi=\xi'$
(according to the definition we also need to prove uniqueness but this follows).

By commutativity we have $\mu'\delta=\xi'\eta$ and hence $\mu'\delta
\beta=\xi'\eta\beta$. Since $\delta\beta=\gamma\alpha$ we also have
$(\mu'\gamma)\alpha=(\xi')\eta\beta$. We now use the fact that $F$ is the pushout of
$B$ and $E$ over $A$ and conclude that there is $\phi':F\to F'$ such that
\begin{equation}\label{app1}
(i) \phi'\xi=\xi'\,\,\, \mbox{ and } \,\,\, (ii) \phi'\mu\gamma=\mu'
\gamma\end{equation} (i) gives half of what we need to show so it is left to see
that $\phi'\mu=\mu'$.

Consider the commutative diagram
\begin{equation}\begin{diagram}
\node{B}\arrow{e,t}{\mu'\gamma}\node{F'}\\
\node{A}\arrow{n,l}{\alpha}\arrow{e,t}{\beta}\node{C}\arrow{n,r}{\xi'\eta}
\end{diagram}
\end{equation}
Because $D$ is the pushout of $B$ and $C$ over $A$, there is a unique map $\psi:D\to
F'$ with the property
$$(i)\psi \delta=\xi'\eta\,\,\, \mbox{ and }\,\,\,(ii) \psi\gamma=\mu'\gamma.$$

If we can show that both maps $\mu'$ and $\phi'\mu$ from $D$ into $F'$ satisfy these
properties of $\psi$ then by uniqueness we will get their equality. For $\psi=\mu'$,
(i) is part of the assumptions, and (ii) is obvious. For $\psi=\phi'\mu$, we obtain
(ii) directly from (\ref{app1})(ii). To see (i), start from (\ref{app1})(i),
$\phi'\xi=\xi'$, and conclude $\phi'\xi\eta=\xi'\eta$. By commutation,
$\xi\eta=\mu\delta$ so we obtain $\phi'\mu\delta=\xi'\eta$, as needed. We therefore
conclude that $\mu'=\phi'\mu$ and hence $F$ is the pushout of $E$ and $D$ over
$C$.\qed

\subsection{Pullback}

\proof[Proof of Proposition \ref{pullback1}]  Consider the diagram
$$\begin{diagram}
\node{} \node{B}\arrow{s,r}{\al}\\
\node{C}\arrow{e,b}{\beta}\node{A}
\end{diagram}$$

We again review the algebraic construction of a pullback (which is simpler because
we take no quotients). We let  $$D=\{(b,c)\in B\times C:\al(b)=\beta(c)\},$$ and the
maps are just $\gamma(b,c)=b$ and $\delta(b,c)=c$. Given
$$\begin{diagram}
\node{D'}\arrow{e,t}{\gamma'}\arrow{s,l}{\delta'} \node{B}\arrow{s,r}{\al}\\
\node{C}\arrow{e,b}{\beta}\node{A}
\end{diagram}$$ we define
$\phi(d')=(\gamma'(d'),\delta'(d'))\in D$.

Clearly, if all data are definable then so is $D$ and the associated maps. Similarly,
if all data are $\bigvee$-definable then so are $D$ and the associated maps.

If $G=\ker(\gamma)$ then
$$G=\{(b,c)\in D:b=0\}=\{(0,c)\in B\times C:\beta(c)=0\},$$ and then clearly
$j(0,c)=c$ is an isomorphism of $G$ and $H=\ker(\beta)$. If all given data are
$\bigvee$-definable then so are $G,H$ and the associated maps. Furthermore, since
$G$ and $H$ are kernels of $\bigvee$-definable maps they are clearly compatible in
$D,C$, respectively.

If $\beta$ is surjective then so is $\gamma$ and the sequences in the diagram are
exact (and the diagram is commutative).\qed

\section{Appendix B - Short and long set}
{\em We assume that $\CM$ is an o-minimal semi-bounded expansion of an ordered group}

\begin{lemma}\label{def-long} Let $S\sub M^r$ be a definable short set and let $A\sub S\times M^n$
be a definable set. For $s\in S$, we let $A_s=\{x\in M^n:(s,x)\in A\}$. Then, for
every $\ell\geq 0$, the set $\ell(A)=\{s\in S:\lgdim(A_s)=\ell \}$ is definable.
\end{lemma}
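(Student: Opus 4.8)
The plan is to reduce the statement to the compatibility of long dimension with the $scl$-pregeometry (the Fact quoted from \cite[Corollary 5.10]{el-sbd}), via a compactness/definability-of-types argument over the short parameter set $S$. First I would fix parameters: assume $A$ and $S$ are $\emptyset$-definable. The key observation is that for a fixed $\ell$, the condition ``$\lgdim(A_s) \geq \ell$'' should be an elementary condition on $s$, uniform in $s$. Indeed, by \cite[Theorem 3.8]{el-sbd} and the cone decomposition from \cite{el-sbd}, $\lgdim(A_s) \geq \ell$ holds if and only if $A_s$ contains an $\ell$-long cone; and by the decomposition results the family $\{A_s : s \in S\}$ can be uniformly decomposed into long cones, so there is a definable family of candidate cones and $\lgdim(A_s) \geq \ell$ becomes a first-order (in fact definable) condition on $s$. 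This would give that each set $\{s : \lgdim(A_s) \geq \ell\}$ is definable, and hence so is $\ell(A) = \{s : \lgdim(A_s) \geq \ell\} \setminus \{s : \lgdim(A_s) \geq \ell+1\}$.

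The cleaner route, which I would actually carry out, goes through generic types. Let $X = \{s \in S : \lgdim(A_s) = \ell\}$. Using the Fact relating $\lgdim$ to the $scl$-pregeometry: for $s \in S$ and any $A$ defining $A$, $\lgdim(A_s) = \max\{\lgdim(x/s) : x \in A_s\}$, because $A_s$ is $s$-definable. So $s \in X$ iff there is $x$ with $(s,x) \in A$ and $\lgdim(x/s) = \ell$, and for no $x$ with $(s,x)\in A$ is $\lgdim(x/s) > \ell$. Now the crucial point is that since $S$ is \emph{short}, for $s \in S$ we have $scl(s) = scl(\emptyset \cup \{\text{short elements}\})$ up to the short closure already available, so $\lgdim(x/s) = \lgdim(x/\emptyset)$ when computing long dimension — more precisely, adding a short tuple to the parameters does not change the $scl$-closure in a way that affects long dimension, since short elements are already in $scl(\emptyset)$. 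Therefore $\lgdim(A_s)$ can be computed as $\max\{\lgdim(x/\emptyset) : x \in A_s\}$, but this still depends on which $x$ lie in $A_s$, i.e. on $s$. To make this definable I would pass to the long-generic points: by \cite[Theorem 3.8]{el-sbd}, $x$ long-generic in $A_s$ over $s$ lies in an $s$-definable $\ell$-long cone inside $A_s$; the existence of an $s$-definable $\ell$-long cone contained in $A_s$ is a definable condition on $s$ (the shape of a long cone — a short cell plus independent partial linear maps over long intervals — is captured by finitely many parameters varying definably), giving definability of $\{s : \lgdim(A_s)\geq \ell\}$.

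The main obstacle I anticipate is making precise and uniform the claim that ``$A_s$ contains an $\ell$-long cone'' is a definable condition on the short parameter $s$ — one must invoke the decomposition theorem of \cite{el-sbd} in a \emph{uniform} (in $s$) form, decomposing the whole set $A \subseteq S \times M^n$ simultaneously into a definable family of long cones over $S$, rather than decomposing each fiber separately. If such a uniform decomposition is available from \cite{el-sbd} (which it should be, by compactness applied to the first-order statement of the decomposition theorem, together with the fact that $S$ is short so its points are ``$scl$-equivalent to $\emptyset$''), the rest is routine: $\lgdim(A_s) \geq \ell$ iff at least one of the finitely many cones in the decomposition restricted to the fiber over $s$ is a genuine $\ell$-long cone, which is a definable condition on $s$; then intersect and subtract to isolate $\ell(A)$.
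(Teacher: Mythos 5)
Your overall strategy---decompose into long cones and read the fiber long dimensions off the cones---is the right one and is essentially the paper's, but the step you yourself flag as the ``main obstacle'' is exactly where the argument has a genuine gap, and the ways you propose to fill it do not work as stated. The core problem is that ``long'' is not a definable property: the set of short elements of $M$ is in general only $\bigvee$-definable, not definable (the paper notes this explicitly for $\CH_0'$ in the proof of Theorem \ref{thm1}). Hence a condition of the form ``there exist parameters defining an $\ell$-long cone contained in $A_s$'' involves the non-first-order requirement that the intervals $(0,a_i)$ be long, and so it is not clear that this is a definable, or even type-definable, condition on $s$; a compactness argument for a ``uniform decomposition'' cannot get off the ground before that is settled. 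Your ``cleaner route'' also leans on the claim that short elements already lie in $scl(\emptyset)$, which is false in general (a short element only witnesses its own shortness); what is true, and what the quoted fact from \cite{el-sbd} gives, is the weaker statement that every element of the $\emptyset$-definable \emph{short set} $S$ has long dimension $0$ over $\emptyset$.

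The missing idea, which makes the lemma elementary and avoids both uniformity and compactness, is the following. Decompose the \emph{total space} $A\sub M^{r+n}$ once into finitely many long cones $C=B+\sum_{i=1}^{k}\lambda_i(t_i)$. Since the first $r$ coordinates of every point of $C$ lie in the short set $S$, and each coordinate function $\lambda_i^j$ ($j\le r$) is a partial linear map on the long interval $I_i$, hence either injective or identically zero, the shortness of $S$ forces $\lambda_i^j\equiv 0$ for all $j\le r$. So all long directions of $C$ are parallel to the fibers, every nonempty fiber $C_s$ is itself a $k$-long cone (via the truncated maps $\hat\lambda_i=(\lambda_i^{r+1},\dots,\lambda_i^{r+n})$, which remain $M$-independent), and $\lgdim(C_s)=k$ for \emph{every} $s$ in the definable projection $\pi(C)$. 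Consequently $\lgdim(A_s)=\max\{k_i:\ s\in\pi(C_i)\}$, and $\ell(A)$ is a Boolean combination of the definable sets $\pi(C_i)$. No quantification over ``long'' parameters and no uniform-in-$s$ family of decompositions is needed.
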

\proof By \cite{el-sbd}, the set $A$ can be written as a union of long cones
$\bigcup C_i$. Since $\lgdim(X_1\cup \cdots\cup X_m)=max_i(\lgdim(X_i))$, we may
assume that $A$ itself is a long cone $A=B+\sum_{i=1}^k \lambda_i(t_i)$, where $B\sub
M^{r+n}$ is a short cell, $\lambda_1,\ldots,\lambda_k$ are $M$-independent
partial linear maps $\lambda_i:I_i\to M^{r+n}$ and $I_i=(0,a_i)$ are long
intervals. We write $\lambda_i=(\lambda_i^1,\ldots,\lambda_i^{r+n})$, for
$i=1,\ldots, k,$ so each $\lambda_i^j$ is a partial endomorphism from $I_i$ into
$M$.

We claim that for every $s\in S$, $\lgdim(A_s)=k$. This clearly implies what we
need.

For $b=(b_1,\ldots, b_{r+n})\in B$, $i=1,\ldots,k$ and $t_i\in I_i$, we have
$b_i+ \lambda_i(t_i):I_i\to A$. Therefore, we have
$(b_1,\ldots,b_r)+(\lambda^1_i(t_i),\ldots,\lambda^r_i(t_i))\in S$. Each
$\lambda^j_i$ is either injective or constantly $0$ and hence, because $S$ is short
and each $I_i$ is long,  for each $j=1,\ldots,r$ and $i=1,\ldots,k$, we have
$\lambda^j_i \equiv 0$. It follows that for every $b\in B$, we have $(b_1,\ldots,
b_r)\in S$.

For $i=1\ldots, k$, we let
$$\hat \lambda_i=(\lambda_i^{r+1},\ldots,
\lambda_i^{r+n}):I_i\to M^n.$$ Because $\lambda_1,\ldots,\lambda_k$ were
$M$-independent, it is still true that $\hat \lambda_1,\ldots,\hat\lambda_k$ are
$M$-independent. We now have, for every $s\in S$,
$$A_s=\left\{b+\sum_{i=1}^k\hat \lambda_i (t_i) : b\in B_s, t\in I_i\right\}$$
and therefore the set $A_s$ is a $k$-long cone, so $\lgdim(A_s)=k$.\qed

\begin{bibdiv}
\begin{biblist}
\normalsize

\bib{BE}{article}{
   author={Baro, El{\'{\i}}as},
   author={Edmundo, M{\'a}rio J.},
   title={Corrigendum to: ``Locally definable groups in o-minimal
   structures''  by
   Edmundo},
   journal={J. Algebra},
   volume={320},
   date={2008},
   number={7},
   pages={3079--3080},
}

\bib{BaOt}{article}{
   author={Baro, El{\'{\i}}as},
   author={Otero, Margarita},
   title={Locally definable homotopy},
   journal={Annals of Pure and Applied Logic},
   volume={161},
   date={2010},
   number={4},
   pages={488--503},
}

\bib{vdd}{book}{
   author={van den Dries, Lou},
   title={Tame topology and o-minimal structures},
   series={London Mathematical Society Lecture Note Series},
   volume={248},
   publisher={Cambridge University Press},
   place={Cambridge},
   date={1998},
   pages={x+180},
}

\bib{ed-sbd}{article}{
   author={Edmundo, Mario J.},
   title={Structure theorems for o-minimal expansions of groups},
   journal={Ann. Pure Appl. Logic},
   volume={102},
   date={2000},
   number={1-2},
   pages={159--181},
   }

\bib{ed1}{article}{
   author={Edmundo, M{\'a}rio J.},
   title={Locally definable groups in o-minimal structures},
   journal={J. Algebra},
   volume={301},
   date={2006},
   number={1},
   pages={194--223},
}

\bib{edel2}{article}{
   author={Edmundo, M{\'a}rio J.},
   author={Eleftheriou, Pantelis E.},
   title={The universal covering homomorphism in o-minimal expansions of groups},
   journal={Math. Log. Quart.},
   volume={53},
   date={2007},
   pages={571--582},
  }

\bib{edel}{article}{
   author={Edmundo, M{\'a}rio J.},
   author={Eleftheriou, Pantelis E.},
   title={Definable group extensions in semi-bounded o-minimal structures},
   journal={Math. Log. Quart.},
   volume={55},
   date={2009},
   number={6},
   pages={598--604},
}

\bib{EO}{article}{
    author={Edmundo M.},
    author={Otero, M.},
    title={Definably compact abelian groups},
   journal={Journal of Math. Logic},
    volume={},
      date={2004},
    number={4},
     pages={163\ndash 180},
}

\bib{el-cdom}{article}{
   author={Eleftheriou, Pantelis E.},
   title={Compact domination for groups definable in linear o-minimal structures},
   journal={Archive for Mathematical Logic},
   volume={48},
   date={2009},
   number={7},
   pages={607--623},
}

\bib{el-sbd}{article}{
   author={Eleftheriou, Pantelis E.},
   title={Local analysis for semi-bounded groups},
   journal={Fundamentae Mathematica},
   volume={to appear},
   date={},
   number={},
   pages={},
   issn={},
}

\bib{el-affine}{article}{
   author={Eleftheriou, Pantelis E.},
   title={Affine embeddings for semi-linear tori},
   journal={in preparation},
   volume={},
   date={},
   number={},
   pages={},
   issn={},
}

\bib{ep-defquot}{article}{
   author={Eleftheriou, Pantelis E.},
   author={Peterzil, Ya'acov},
   title={Definable quotients of locally definable groups},
   journal={preprint},
   volume={},
   date={},
   number={},
   pages={},
}
\bib{ElSt}{article}{
   author={Eleftheriou, Pantelis E.},
   author={Starchenko, Sergei},
   title={Groups definable in ordered vector spaces over ordered division
   rings},
   journal={J. Symbolic Logic},
   volume={72},
   date={2007},
   number={4},
   pages={1108--1140},
}

\bib{Fuchs}{book}{
   author={Fuchs, L{\'a}szl{\'o}},
   title={Infinite abelian groups. Vol. I},
   series={Pure and Applied Mathematics, Vol. 36},
   publisher={Academic Press},
   place={New York},
   date={1970},
   pages={xi+290},
}

\bib{HP}{article}{
   author={Hrushovski, Ehud},
   author={Pillay, Anand},
   title={On NIP and invariant measures},
   journal={preprint},
}

\bib{HPP}{article}{
   author={Hrushovski, Ehud},
   author={Peterzil, Ya'acov},
   author={Pillay, Anand},
   title={Groups, measures, and the NIP},
   journal={J. Amer. Math. Soc.},
   volume={21},
   date={2008},
   number={2},
   pages={563--596},
}
\bib{HPP2}{article}{
   author={Hrushovski, Ehud},
   author={Peterzil, Ya'acov},
   author={Pillay, Anand},
   title={On central extensions and definably compact groups in o-minimal
structures},
   journal={Journal of Algebra},
   volume={327},
   date={2011},
   number={},
   pages={71--106},
   }

\bib{Ot-Pet}{article}{
   author={Otero, Margarita}
   author={Peterzil, Ya'acov},
   title={$G$-linear sets and torsion points in definably compact groups},
   journal={Arch. Math. Logic},
   volume={48},
   date={2009},
   number={},
   pages={387--402},
}

\bib{pet-sbd}{article}{
   author={Peterzil, Ya'acov},
   title={Returning to semi-bounded sets},
   journal={J. Symbolic Logic},
   volume={74},
   date={2009},
   number={2},
   pages={597--617},
}

\bib{pest-tri}{article}{
   author={Peterzil, Ya'acov},
   author={Starchenko, Sergei},
   title={A trichotomy theorem for o-minimal structures},
   journal={Proceedings of London Math. Soc.},
   volume={77},
   date={1998},
   number={3},
   pages={481--523},
}

\bib{PetSte}{article}{
   author={Peterzil, Ya'acov},
   author={Steinhorn, Charles},
   title={Definable compactness and
definable subgroups of o-minimal groups},
   journal={Journal of London Math. Soc.},
   volume={69},
   date={1999},
   number={2},
   pages={769--786},
}

\bib{str}{article}{
author={Strzebonski, A.},
     title={Euler charateristic in semialgebraic and other o-minimal
groups},
   journal={J. Pure Appl. Algebra},
    volume={96},
      date={1994},
    number={},
     pages={173--201},
}

\end{biblist}
\end{bibdiv}

\end{document}